\newtheorem{theorem}{Theorem}[section]
\newtheorem{lemma}[theorem]{Lemma}
\newtheorem{corollary}[theorem]{Corollary}
\newtheorem{remark}[theorem]{Remark}
\numberwithin{equation}{section}
\newcommand{\nb}{\nonumber}
\newcommand{\R}{{\mathscr R}}
\newcommand{\N}{{\mathscr N}}
\begin{document}

\begin{center}
{\LARGE  \bf Twists of two or multiple idempotent matrices}
\end{center}

\medskip

\begin{center}
{\Large  Yongge Tian}
\end{center}

\medskip
\begin{center}
{\footnotesize \em
Shanghai Business School, Shanghai, China \& Central University of Finance and Economics, Beijing, China}
\end{center}

\renewcommand{\thefootnote}{\fnsymbol{footnote}}
\footnotetext{{\it E-mail:} yongge.tian@gmail.com}

\noindent%
{\small {\bf Abstract.} \ In this article, we revisit some block matrix construction methods and use them to derive various general expansion formulas for calculating the ranks of matrix expressions. As applications, we derive a variety of interesting rank equalities for matrix expressions composed by idempotent matrices, and present their applications in the characterization of some matrix equalities for generalized inverses of partitioned matrices. \\

\noindent {\bf Aathematics Subject Classifications (2000):}  15A03; 15A09; 15A27; 47A05

\medskip

\noindent {\bf Keywords:}  rank formula; idempotent matrix; block matrix; generalized inverse}

\section[1]{Introduction}

Throughout this article, let ${\mathbb C}^{m\times n}$ denote the set of all $m\times n$ complex matrices; $A^{\ast}$,  $r(A)$, and ${\mathscr R}(A)$ be the conjugate transpose, the rank, and the range (column space) of a matrix $A\in {\mathbb C}^{m\times n}$, respectively; $I_m$ be the identity matrix of order $m$; and $[\, A, \, B\,]$ be a row block matrix consisting of $A$ and $B$. We next introduce the definition and notation of generalized inverses of matrix. The Moore--Penrose inverse of $A \in {\mathbb C}^{m \times n}$, denoted by $A^{\dag}$,
is the unique matrix $X \in {\mathbb C}^{n \times m}$ satisfying the four Penrose equations
\begin{align}
{\rm (i)}  \  AXA = A,   \ \   {\rm (ii)}   \ XAX=X, \ \ {\rm (iii)}  \  (AX)^{\ast} = AX,  \ \  {\rm (iv)}   \  (XA)^{\ast} = XA.
 \label{11}
\end{align}
A matrix $X$ is called an $\{i,\ldots, j\}$-generalized inverse of $A$, denoted by $A^{(i,\ldots, j)}$, if it satisfies the $i$th,$\ldots,j$th equations. The collection of all $\{i,\ldots, j\}$-generalized inverses of $A$ is denoted by $\{A^{(i,\ldots, j)}\}$. The eight commonly-used generalized inverses of $A$ are $A^{\dag}$, $A^{(1,3,4)}$, $A^{(1,2,4)}$, $A^{(1,2,3)}$,  $A^{(1,4)}$,  $A^{(1,3)}$, $A^{(1,2)}$, and $A^{(1)}$. Furthermore, let $P_{A} =AA^{\dag}$, $E_{A} = I_m - AA^{\dag}$, and $F_{A} = I_n -A^{\dag}A$ stand for the three orthogonal projectors induced by $A$. Moreover, a matrix $X$ is called a $\{1\}$-inverse of $A$, denoted by $A^{-}$, if it satisfies $AXA = A$; the collection of all $A^{-}$ is denoted by $\{A^{-}\}$. The Drazin inverse of a square matrix $M$,  denoted by $X = M^{D}$, is defined to be the unique solution $X$ to the following three matrix equations $M^{t}XM = M^t$,  $XMX = X$ and $MX =  XM$,  where $t$ is the index of $M$, i.e., the smallest nonnegative integer $t$ such that $r(M^t) = r(M^{t+1}).$ When $t =1$, $X$ is also called the group inverse of $M$ and is denoted by $M^{\#}$. See e.g., \cite{BG,CM,RM} for more issues on generalized inverses of matrices.

The rank of matrix is a quite basic concept in linear algebra, which may be defined by different manners and can be calculated directly by transforming the matrix to certain row and/or column echelon forms. One of the most important applications of ranks of matrices is to describe singularity and nonsingularity of matrices, as well as the dimensions of row and column spaces of the matrices.  Thus, people would always be of interest in establishing various simple and valuable formulas for calculating the ranks of matrices under various assumptions. One of the best-known fundamental formulas for ranks of matrices is $r(A) = r(PAQ)$ provided $P$ and $Q$ are two nonsingular matrices, and people used it to derive numerous interesting and useful rank equalities for different choice of the matrices $A$, $P$, and $Q$. It is well known in linear algebra that people can establish rank formulas from block matrices and their elementary operations. In this article, we revisit this time-honored trick through some examples and show how to
establish various valuable formulas for calculating the ranks of matrices using the block matrix method. As applications, we solve many some matrix equality problems on idempotent matrices and generalized inverses under various assumptions.

\section[2]{How to establish rank formulas for specified block matrices}

It has a long history in linear algebra to establish simple and useful equalities for ranks of matrices using various tricky elementary operations of matrices. Especially, there is a major route to derive matrix rank formulas through various specific block matrix constructions. For instance, the following rank formulas in linear algebra
\begin{align}
r(\,I_m - A^2 \,)  & = r(\, I_m + A \,) +  r(\, I_m - A \,)  - m,
\label{hh21}
\\
r(\,A \pm A^2 \,)  & = r(A) +  r(\, I_m \pm A \,)  - m,
\label{hh22}
\\
r(\,A \pm A^3 \,)  & = r(A) +  r(\, I_m \pm A^2 \,)  - m,
\label{hh23}
\\
r[\, A(\, I_m \pm A \,)^2 \,] & =  r(A) +  r[\, (\, I_m \pm A \,)^2 \,] - m,
\label{hh24}
\\
 r(\, I_m - AB \,) + n  & = r(\, I_n - BA \,) + m,
\label{hh25}
\\
 r(\, A - AXBYA \,) + r(B) & = r(\, B - BYAXB \,) + r(A)
\label{hh27}
\end{align}
for any matrices $A$, $B$, $X$, and $Y$ of the appropriate sizes and the following rank formula
\begin{align}
r[(I_m - P - Q + QP) = m - r(P) - r(Q) + r(PQ)
\label{hh28}
\end{align}
for any two idempotent matrices $P$ and $Q$ of the same size are simple and well known, which can be established by calculating the ranks of the following specified two-by-two block matrices
\begin{align*}
& \begin{bmatrix} I_m &  I_m + A \\ I_m - A & 0 \end{bmatrix}, \ \ \begin{bmatrix} I_m & I_m \pm A \\ A & 0 \end{bmatrix},  \ \ \begin{bmatrix} I_m &  I_m \pm A^2 \\ A & 0 \end{bmatrix},  \ \ \begin{bmatrix} I_m &  (I_m \pm A)^2 \\ A & 0 \end{bmatrix},
\\
& \ \ \ \ \ \ \ \ \ \ \ \  \ \ \ \  \begin{bmatrix} I_m & A \\ B & I_n \end{bmatrix},  \ \ \ \begin{bmatrix} A & AXB \\ BYA  & B \end{bmatrix},  \ \ \ \begin{bmatrix} I_m &  Q \\ P & 0 \end{bmatrix},
\end{align*}
respectively; see, e.g., \cite{ASt,MS:1974,Sty,TS4}. These rank formulas can directly be used to characterize algebraic properties of the matrices in the formulas, such as, nullity, singularity, nonsingularity, etc., and of course are basic issues in many textbooks in linear algebra and matrix theory.
It seems more natural to consider consecutive subsequences of these block matrices and to extend such kind of rank formulas to general settings under  various assumptions. The principal issue dealt with here is to what extent these formulas generalize to cases with multiple matrices. In first half of this section, we derive three general rank formulas using block matrices composed by general solutions of several consistent linear matrix equations. We then present a variety of simple and interesting consequences for idempotent matrices in the formulas.

\begin{theorem} \label{T1}
Let $M \in {\mathbb C}^{m\times m}$ be given$,$ and assume that $X, \, Y \in {\mathbb C}^{m\times m}$
are solutions of the following three matrix equations
\begin{align}
MX = X, \ \ YM = Y, \ \  MY = XM.
\label{z1}
\end{align}
Then the rank of $X - Y$ can be calculated by the expansion formula
\begin{align}
r(\,X - Y\,) = r\!\begin{bmatrix} X \\ Y \end{bmatrix} + r[\,X, \, Y\,] - r(X) - r(Y).
\label{z2}
\end{align}
\end{theorem}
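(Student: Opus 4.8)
My plan is to strip the statement down to a single assertion about range intersections and then to invoke the hypotheses \eqref{z1} at exactly one decisive point. I would begin with the two elementary rank formulas
\begin{align*}
r[\,X,\,Y\,] &= r(X) + r(Y) - \dim(\mathscr{R}(X)\cap\mathscr{R}(Y)), \\
r\!\begin{bmatrix} X \\ Y \end{bmatrix} &= r(X) + r(Y) - \dim(\mathscr{R}(X^{\ast})\cap\mathscr{R}(Y^{\ast})),
\end{align*}
valid for arbitrary $X,Y$ (the first since $\mathscr{R}[\,X,\,Y\,]=\mathscr{R}(X)+\mathscr{R}(Y)$, the second by applying the first to $X^{\ast}$ and $Y^{\ast}$). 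Substituting these into the right-hand side of \eqref{z2}, the claimed identity becomes equivalent to the single equality
\begin{align*}
r(\,X - Y\,) = r(X) + r(Y) - \dim(\mathscr{R}(X)\cap\mathscr{R}(Y)) - \dim(\mathscr{R}(X^{\ast})\cap\mathscr{R}(Y^{\ast})).
\end{align*}

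To treat the left-hand side I would use a general dimension count. Regarding $\begin{bmatrix} X \\ Y\end{bmatrix}$ as the linear map $v\mapsto(Xv,Yv)$ and writing $D=\{\,(w,w)\,\}$ for the diagonal of $\mathbb{C}^{m}\oplus\mathbb{C}^{m}$, the subspace $\ker(X-Y)=\{v:Xv=Yv\}$ is precisely the preimage of $D$. The identity $\dim f^{-1}(U)=\dim\ker f+\dim(U\cap\mathscr{R}(f))$, applied to this map and subtracted from $m$, yields
\begin{align*}
r(\,X - Y\,) = r\!\begin{bmatrix} X \\ Y\end{bmatrix} - \dim\!\left(D\cap\mathscr{R}\!\begin{bmatrix} X \\ Y\end{bmatrix}\right).
\end{align*}
In view of the second formula above, the displayed equality — and hence the whole theorem — reduces to showing
\begin{align*}
\dim\!\left(D\cap\mathscr{R}\!\begin{bmatrix} X \\ Y\end{bmatrix}\right) = \dim(\mathscr{R}(X)\cap\mathscr{R}(Y)),
\end{align*}
that is, that the set $Z := \{\,w:\ Xv=Yv=w \ \text{for some}\ v\,\}$ coincides with $\mathscr{R}(X)\cap\mathscr{R}(Y)$.

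The inclusion $Z\subseteq\mathscr{R}(X)\cap\mathscr{R}(Y)$ is immediate, and the reverse inclusion is the only step that uses \eqref{z1}; I expect this common-preimage claim to be the crux of the argument. Given $w\in\mathscr{R}(X)\cap\mathscr{R}(Y)$, I would write $w=Yt$ and set $v=Mt$. Then $Yv=YMt=Yt=w$ by $YM=Y$, and $Xv=XMt=MYt=Mw$ by $MY=XM$, while $Mw=w$ because $MX=X$ forces $M$ to act as the identity on $\mathscr{R}(X)\ni w$. Hence $Xv=Yv=w$, so $w\in Z$ and $Z=\mathscr{R}(X)\cap\mathscr{R}(Y)$. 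The difficulty this overcomes is genuine: a priori a vector lying in both ranges is $Xs$ and $Yt$ for unrelated $s,t$, and the coupling $MY=XM$ (together with $MX=X$ and $YM=Y$) is exactly what manufactures a single preimage $v$. Everything else is routine bookkeeping with standard rank identities, and the same argument can be recast as a two-way rank evaluation of a block matrix assembled from $X$, $Y$, and $M$, in keeping with the methods of Section~2.
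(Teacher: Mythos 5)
Your proof is correct, but it takes a genuinely different route from the paper's. The paper proves \eqref{z2} by a two-way rank evaluation of the single block matrix $N$ in \eqref{z3}: pre- and post-multiplying $N$ by one pair of nonsingular block matrices (no hypotheses needed) gives $r(N)=r(\,X-Y\,)+r(X)+r(Y)$, while multiplying by a second pair and invoking \eqref{z1} to annihilate the blocks $MX-X$, $MY-XM$, and $Y-YM$ gives $r(N)=r\!\begin{bmatrix} X \\ Y \end{bmatrix}+r[\,X,\,Y\,]$; equating the two evaluations is the whole proof. You instead convert all four ranks on the right-hand side into subspace dimensions, express $r(\,X-Y\,)$ by a kernel/preimage count, and thereby reduce the theorem to one geometric assertion: that the common-image subspace $\{w:\ Xv=Yv=w \ \text{for some}\ v\}$, i.e.\ $D\cap\mathscr{R}\!\begin{bmatrix} X \\ Y \end{bmatrix}$, coincides with $\mathscr{R}(X)\cap\mathscr{R}(Y)$. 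You then prove the nontrivial inclusion by using each equation of \eqref{z1} exactly once to manufacture a common preimage $v=Mt$; this step is sound ($Yv=YMt=Yt=w$, $Xv=XMt=MYt=Mw$, and $Mw=w$ since $M$ fixes $\mathscr{R}(X)$), and the bookkeeping steps around it are standard identities. Both arguments are complete. The paper's method is mechanical and is the template reused verbatim for Theorems \ref{T2} and \ref{T3}, so it scales to the later results with no new ideas; your method isolates the intrinsic content of the hypotheses --- under \eqref{z1} every vector in $\mathscr{R}(X)\cap\mathscr{R}(Y)$ has a single common preimage under $X$ and $Y$ --- which explains \emph{why} the formula holds and precisely where each equation enters, at the cost of being a bespoke argument rather than a uniform block-matrix recipe.
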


\begin{proof}
We first construct a block matrix from $X$ and $Y$ as follows
\begin{align}
N= \begin{bmatrix} - X & 0  & X
\\ 0 & Y  & Y  \\ X & Y & 0 \end{bmatrix}\!.
\label{z3}
\end{align}
Then it is easy to verify that
\begin{align}
P_1NQ_1 =\begin{bmatrix} I_m & 0  & 0
\\ 0 & I_m  & 0  \\ I_m & -I_m & I_m \end{bmatrix}\!N\!\begin{bmatrix} I_m & 0  & I_m
\\ 0 & I_m  & -I_m  \\ 0 & 0 & I_m \end{bmatrix} = \begin{bmatrix} - X & 0  & 0
\\ 0 & Y  & 0  \\ 0 & 0 & X-Y \end{bmatrix}\!,
\label{z4}
\end{align}
and from \eqref{z1} that
\begin{align}
P_2NQ_2 = \begin{bmatrix} I_m & 0  & M
\\ 0 & I_m  & 0  \\ 0 & 0 & I_m \end{bmatrix}\!N\!\begin{bmatrix} I_m & 0  & 0
\\ 0 & I_m  & 0  \\ 0 & -M & I_m \end{bmatrix} = \begin{bmatrix} 0 & 0  & X
\\ 0 & 0  & Y  \\ X & Y & 0 \end{bmatrix}\!.
\label{z5}
\end{align}
Since $P_1$,  $Q_1$,  $P_2$,  and $Q_2$ are nonsingular matrices, both \eqref{z4} and \eqref{z5} imply that
\begin{align}
& r(N) = r(P_1NQ_1) =  r\!\begin{bmatrix} - X & 0  & 0
\\ 0 & Y  & 0  \\ 0 & 0 & X-Y \end{bmatrix} = r(\,X - Y\,) + r(X) + r(Y),
\label{z6}
\\
& r(N) = r(P_2NQ_2) = r\!\begin{bmatrix} 0 & 0  & X
\\ 0 & 0  & Y  \\ X & Y & 0 \end{bmatrix} = r\!\begin{bmatrix} X \\ Y \end{bmatrix} + r[\,X, \, Y\,].
\label{z7}
\end{align}
Combining \eqref{z6} and \eqref{z7} leads to \eqref{z2}.
\end{proof}

\begin{theorem} \label{T2}
Let $A, \, B\in {\mathbb C}^{m\times m}$ be given$,$ and assume that $X, \, Y \in {\mathbb C}^{m\times m}$ are solutions of the following equations
\begin{align}
AX = X, \ \ YB = Y, \ \ AY = XB.
\label{z8}
\end{align}
Then the rank of $X - Y$ can be calculated by the expansion formula
\begin{align}
r(\,X - Y\,) = r\!\begin{bmatrix} X \\ Y \end{bmatrix} + r[\,X, \, Y\,] - r(X) - r(Y).
\label{z9}
\end{align}
\end{theorem}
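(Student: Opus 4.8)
The plan is to mimic the block-matrix argument used for Theorem~\ref{T1} almost verbatim, exploiting the observation that Theorem~\ref{T2} is a strict generalization of Theorem~\ref{T1}: setting $A = B = M$ turns the three hypotheses $AX = X$, $YB = Y$, $AY = XB$ in \eqref{z8} into exactly the hypotheses $MX = X$, $YM = Y$, $MY = XM$ in \eqref{z1}. The essential feature of the earlier proof is that the single matrix $M$ enters only through left multiplication in one place and right multiplication in another; decoupling these two roles into $A$ and $B$ is precisely what the new hypotheses permit.

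Concretely, I would start from the identical block matrix
\begin{align*}
N = \begin{bmatrix} -X & 0 & X \\ 0 & Y & Y \\ X & Y & 0 \end{bmatrix},
\end{align*}
and reuse the first pair of nonsingular multipliers $P_1, Q_1$ from \eqref{z4} without any change, since those operations depend only on the internal structure of $N$ and never invoke $M$. This yields, exactly as in \eqref{z6}, the evaluation $r(N) = r(X - Y) + r(X) + r(Y)$.

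The only modification is in the second pair of multipliers. Because the left-cancelling relations are now governed by $A$ and the right-cancelling relations by $B$, I would take
\begin{align*}
P_2 = \begin{bmatrix} I_m & 0 & A \\ 0 & I_m & 0 \\ 0 & 0 & I_m \end{bmatrix}, \qquad Q_2 = \begin{bmatrix} I_m & 0 & 0 \\ 0 & I_m & 0 \\ 0 & -B & I_m \end{bmatrix}.
\end{align*}
Forming $N Q_2$ replaces the $(1,2)$ and $(2,2)$ blocks by $-XB$ and $Y - YB$; then left-multiplying by $P_2$ adds $A$ times the third block row to the first, so the $(1,1)$ block becomes $-X + AX$, the $(1,2)$ block becomes $-XB + AY$, and the $(2,2)$ block stays $Y - YB$. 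The three hypotheses $AX = X$, $AY = XB$, and $YB = Y$ annihilate these three blocks in turn, producing the same reduced matrix as in \eqref{z5}, whose rank equals $r\!\begin{bmatrix} X \\ Y \end{bmatrix} + r[\,X,\,Y\,]$ as in \eqref{z7}. Equating the two evaluations of $r(N)$ gives \eqref{z9}.

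I do not expect any genuine obstacle here: the argument is structurally identical to Theorem~\ref{T1}, and the only point requiring care is confirming that the three cancellations in the second reduction still occur once the roles of $A$ and $B$ are separated. That verification is the short computation just described, and it goes through precisely because each hypothesis in \eqref{z8} is tailored to kill one of the three residual blocks.
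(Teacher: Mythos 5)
Your proposal is correct and follows essentially the same route as the paper: the paper also reuses the block matrix $N$ from \eqref{z3} together with the evaluation \eqref{z6}, and then performs exactly your two operations (adding $A$ times the third block row to the first, and subtracting the third block column times $B$ from the second) as elementary transformations, invoking $AX=X$, $AY=XB$, and $YB=Y$ to reach the matrix in \eqref{z5}. Your only cosmetic difference is packaging those transformations as explicit nonsingular multipliers $P_2,Q_2$ rather than as a chain of rank-preserving reductions.
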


\begin{proof}
It is easy to verify from \eqref{z8} and elementary matrix transformations that the rank of $N$ in \eqref{z3} is
\begin{align}
r(N) = r\!\begin{bmatrix} 0 & AY  & X
\\ 0 & Y  & Y  \\ X & Y & 0 \end{bmatrix} = r\!\begin{bmatrix} 0 & XB  & X
\\ 0 & YB  & Y  \\ X & Y & 0 \end{bmatrix} =
r\begin{bmatrix} 0 & 0  & X
\\ 0 & 0  & Y  \\ X & Y & 0 \end{bmatrix}  = r\!\begin{bmatrix} X \\ Y \end{bmatrix} + r[\,X, \, Y\,].
\label{z10}
\end{align}
Combining \eqref{z6} and \eqref{z10} leads to \eqref{z9}.
\end{proof}

\begin{theorem} \label{T3}
Let $A, \,B \in {\mathbb C}^{m\times m}$ be given$,$ and assume that $X \in {\mathbb C}^{m\times n}$ and
$Y \in {\mathbb C}^{m\times p}$ are solutions of the following equations
\begin{align}
AX = X, \ \ BY = Y, \ \ \R(X) \supseteq \R(AY), \ \ \R(Y) \supseteq \R(BX).
\label{z11}
\end{align}
Then the following rank equality holds
\begin{align}
r[\,AY, \,  BX\,] = r[\,X, \, Y\,] + r(AY) + r(BX) -  r(X) - r(Y).
\label{z12}
\end{align}
\end{theorem}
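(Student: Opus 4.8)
The plan is to recast \eqref{z12} as a single statement about the dimension of an intersection of two subspaces, and then to verify that intersection equality directly from the four conditions in \eqref{z11}. First I would use the elementary identity $r[\,P,\,Q\,]=r(P)+r(Q)-\dim(\R(P)\cap\R(Q))$, valid for any two matrices $P,Q$ with the same number of rows; it is merely the Grassmann dimension formula applied to $\R([\,P,\,Q\,])=\R(P)+\R(Q)$. Applying it to the two row block matrices in \eqref{z12} gives
\[
r[\,AY,\,BX\,]=r(AY)+r(BX)-\dim\big(\R(AY)\cap\R(BX)\big),
\]
\[
r[\,X,\,Y\,]=r(X)+r(Y)-\dim\big(\R(X)\cap\R(Y)\big).
\]
Substituting both expressions into \eqref{z12} and cancelling the common terms $r(AY),r(BX),r(X),r(Y)$, the asserted equality collapses to the single dimension identity $\dim(\R(AY)\cap\R(BX))=\dim(\R(X)\cap\R(Y))$. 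Hence it suffices to prove this identity, and I would in fact prove the stronger fact that the two subspaces coincide.

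The containments $\R(AY)\subseteq\R(X)$ and $\R(BX)\subseteq\R(Y)$ are exactly the last two hypotheses of \eqref{z11}, and they immediately yield $\R(AY)\cap\R(BX)\subseteq\R(X)\cap\R(Y)$. For the reverse inclusion I would take an arbitrary $v\in\R(X)\cap\R(Y)$ and write $v=Xa=Yb$ for suitable vectors $a,b$. The equation $AX=X$ then gives $Av=AXa=Xa=v$, so that $v=Av=A(Yb)=(AY)b\in\R(AY)$; symmetrically, $BY=Y$ gives $Bv=BYb=Yb=v$, so that $v=Bv=B(Xa)=(BX)a\in\R(BX)$. Therefore $v\in\R(AY)\cap\R(BX)$, which proves $\R(X)\cap\R(Y)\subseteq\R(AY)\cap\R(BX)$. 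The two inclusions give $\R(AY)\cap\R(BX)=\R(X)\cap\R(Y)$, in particular the required dimension identity, and \eqref{z12} follows.

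The one genuinely load-bearing step is the reverse inclusion, as it is the only place where all four conditions of \eqref{z11} are used together: the equations $AX=X$ and $BY=Y$ force every vector of $\R(X)\cap\R(Y)$ to be fixed by both $A$ and $B$, and this self-reproducing property is precisely what pushes such a vector back into the smaller ranges $\R(AY)$ and $\R(BX)$; everything else is bookkeeping. To keep the argument within the block matrix framework of Theorems \ref{T1} and \ref{T2}, one could alternatively encode the dimension identity by forming a single block matrix built from $X,Y,AY,BX$ and evaluating its rank along two chains of elementary operations — using $AX=X$ and $BY=Y$ for one chain and the two range inclusions of \eqref{z11} for the other — but the subspace computation above is the most transparent route and I would present it as the main proof.
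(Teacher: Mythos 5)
Your proof is correct, but it takes a genuinely different route from the paper. The paper stays inside its block-matrix framework: it forms the matrix $N$ in \eqref{z13} built from $X$, $Y$, $AY$, $BX$ and evaluates $r(N)$ along two chains of elementary block operations — one chain uses the range inclusions $\R(X)\supseteq \R(AY)$, $\R(Y)\supseteq \R(BX)$ (writing $AY=XW_1$, $BX=YW_2$ to clear blocks) to get \eqref{z14}, the other uses $AX=X$, $BY=Y$ to get \eqref{z15} — and equates the two values. You instead invoke the Grassmann dimension formula (which the paper itself records later as \eqref{w75}) to reduce \eqref{z12} to the single identity $\dim(\R(AY)\cap\R(BX))=\dim(\R(X)\cap\R(Y))$, and then prove the stronger subspace equality $\R(AY)\cap\R(BX)=\R(X)\cap\R(Y)$ by a direct element-chase: the inclusion $\subseteq$ is immediate from the last two hypotheses of \eqref{z11}, and for the reverse inclusion the equations $AX=X$, $BY=Y$ force any $v=Xa=Yb$ to satisfy $v=Av=(AY)b$ and $v=Bv=(BX)a$. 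Each step of this checks out. What your argument buys is transparency and a sharper structural conclusion — the explicit coincidence of the two intersections, not merely equality of their dimensions; what the paper's argument buys is uniformity with Theorems \ref{T1} and \ref{T2} and mechanical extendability to families of more than two idempotents, which is exactly how it is scaled up in Theorem \ref{Th25}, where a subspace-intersection argument would become harder to organize.
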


\begin{proof}
Construct a block matrix from $X$, $Y$, $AY$, and $BX$ as follows
\begin{align}
N = \begin{bmatrix} X & 0  & AY & 0
\\ 0 & Y  & 0  & BX \\ X & Y & 0 & 0\end{bmatrix}\!.
\label{z13}
\end{align}
Then it is easy to verify under \eqref{z11} that
\begin{align}
r(N) = r\!\begin{bmatrix} X & 0  & 0 & 0
\\ 0 & Y  & 0  & 0 \\ 0 & 0 & -AY & - BX\end{bmatrix} =  r[\,AY, \,  BX\,] + r(X) + r(Y),
\label{z14}
\end{align}
and that
\begin{align}
r(N) = r\!\begin{bmatrix} 0 &  -AY & AY & 0
\\ -BX & 0  & 0  & BX \\ X & Y & 0 & 0\end{bmatrix}
 = r\!\begin{bmatrix} 0 &  0 & AY & 0
\\ 0 & 0  & 0  & BX \\ X & Y & 0 & 0\end{bmatrix} =  r[\,X, \,  Y\,] + r(AY) + r(BX).
\label{z15}
\end{align}
Combining \eqref{z14} and \eqref{z15} leads to \eqref{z12}.
\end{proof}

The matrix equations in \eqref{z1}, \eqref{z8},  and \eqref{z11} are quite fundamental in matrix analysis, and have been widely studied in theory and applications,; see \cite{Mey}. Under the assumptions of these equations, \eqref{z2},  \eqref{z9}, and \eqref{z12} link the solutions of these matrix equations and their operations. In this situation, there is a very strong intrinsic mathematical motivation to establish concrete rank formulas from \eqref{z1}, \eqref{z8},  and \eqref{z11} for various solutions of the matrix equations.

Recall that a square matrix $A$ is said to be idempotent if $A^2 = A$. An idempotent matrix is often called an oblique projector whose null space is oblique to its range, in contrast to orthogonal projector, whose null space is orthogonal to its range. idempotents can be defined in more general algebraic structures,
and are important  tools in the investigation of the algebraic structures. As is known to all, idempotent matrices have strikingly simple and interesting properties, and one of such properties is that any idempotent matrix $A$ can be decomposed as $A = P{\rm diag}(I_k, 0)P^{-1}$, where $k$ is the rank of $A$, while any Hermitian idempotent matrix $A$ admits the decomposition $A = P{\rm diag}(I_k, 0)P^{\ast}$, where $P^{-1} = P^{\ast}$. Idempotent matrices arise naturally in the theory of generalized inverses of matrices, and are a class of fundamental objects of study in matrix analysis. For instance,
\begin{enumerate}
\item[{\rm (a)}] the pair of the two ordered products $AA^-$ and $A^-A$ are always idempotent matrices for any generalized inverse $A^-$ of $A$;

\item[{\rm (b)}] both $AA^{\dag}$ and $A^{\dag}A$ are Hermitian idempotent matrices for the Moore--Penrose inverse $A^{\dag}$ of $A$;

\item[{\rm (c)}] the matrix $X(X^{*}VX)^{\dag}X^{*}V$ is always idempotent. This matrix
often occurs in the weighted least-squares estimation problems in linear regression analysis.
\end{enumerate}
Many other types of matrix can be converted into idempotent matrices through some elementary operations. For instance,
\begin{enumerate}
\item[{\rm (d)}] if $A^2 = -A$, then $(-A)^2 = -A$, i.e., $-A$ is idempotent;

\item[{\rm (e)}] if $A^2 = I_m$, then $(\,I_m \pm A \,)/2$ are idempotent; if $A^2 = -I_m$,
  then $(I_m \pm iA)/2$ are idempotent;

\item[{\rm (f)}] the product $B(AB)^{\dag}A$, as well as $BC(ABC)^{\dag}A$, and $C(ABC)^{\dag}AB$ are idempotent.
\end{enumerate}
Any matrix $A$ satisfying a quadratic equation $A^2 + aA + bI_m=0$ can be
written as $[\, A - (a/2)I_m \,]^2 = (\,  a^2/4 - b\,)I_m$. If
$a^2/4  -b \neq 0$, then we can also construct an idempotent matrix
from this equality. Through these transformations, various results on idempotent
matrices can be extended to other types of quadratic matrices.

There is a substantial literature related to the approaches on idempotent matrices and related topics; see e.g., \cite{Af,ACM,ACS,AHT,Ara,BB,BBS,BBSz,BBo,BBT,Bal,BV,Bor,BGK,BS1,BS2,BS3,Bak,BT,BTh, CX,CDu,CD,Cer,Cve,Den1,Den2,Den3,Dok,DRR,DD,BES1,BES2,BES3,BES4,BES5,Erd,FRS,Fou,Gab,
Gre,Gro1,Gro2,Gro3,GT,HN,HO1,HO2,HP1,HP2,HS,HT,Ikr1,Ikr2,ITP,Kala,Kaw,KR1,KR2,KR3,KRS,KRa,KRSa,
Kol,KS1,KS2,Krup,KRSi,KSp,Laf,LMR,LWY,ND,Nis,Oml,OO,Pas,Paz1,Paz2,Paz3,PT,PSI,Rab0,Rab1,
Rab2,RSa,RR,RS,SO,Shc,Spi,Steg,Ste,TY,T2,T3,Tian:2011,Tian:2019b,TT1,TT2,Tos,Tre,Vet,
Wan,WW,Wu,XZZ,YTY,YL,ZX}, and one of the main contributions in this respect is establishing various analytical and valuable formulas for calculating ranks of various matrix expressions composed by idempotent matrices; see e.g., \cite{CCLY,KR2,TS1,TS2,TS3,ZLLY,Zuo1,Zuo2,ZZY} among others.


Armed with the results in Theorems \ref{T1}--\ref{T3}, we can establish many expansion formulas for calculating ranks of matrix expressions composed by idempotent matrices, and to present their consequences on the relationships among the idempotent matrices which, as we
shall see later, are the basis for the theory of ranks of idempotent matrices.

\begin{theorem} \label{T4}
Let $A$ and $B$ be two idempotent matrices of the same size$.$  Then the following rank equalities
\begin{align}
r[\,(AB)^k, \, (BA)^k\,] & = r[\,A, \, B\,] + r[(AB)^k] + r[(BA)^k]  - r(A) - r(B),
\label{z16}
\\
r[\,(AB)^kA, \, (BA)^kB\,] & = r[\,A, \, B\,] + r[(AB)^kA] + r[(BA)^kB]  - r(A) - r(B),
\label{z17}
\\
r\!\begin{bmatrix} (AB)^k \\ (BA)^k \end{bmatrix} & =  r\!\begin{bmatrix} A \\ B \end{bmatrix} +  r[(AB)^k] + r[(BA)^k] - r(A) - r(B),
\label{z18}
\\
r\!\begin{bmatrix} (AB)^kA \\ (BA)^kB \end{bmatrix} & =  r\!\begin{bmatrix} A \\ B \end{bmatrix} +  r[(AB)^kA] + r[(BA)^kB] - r(A) - r(B),
\label{z19}
\\
r[\,(AB)^k - (BA)^k\,] & = r\!\begin{bmatrix} (AB)^k \\ (BA)^k  \end{bmatrix} + r[\,(AB)^k, \, (BA)^k\,] -  r[(AB)^k] - r[(BA)^k],
\label{z20}
\\
r[\,(AB)^k - (BA)^k\,] &  = r\!\begin{bmatrix} A \\ B \end{bmatrix} + r[\,A, \, B\,] +  r[(AB)^k] + r[(BA)^k] - 2r(A) - 2r(B),
\label{z21}
\\
r[\,(AB)^kA - (BA)^kB\,] & = r\!\begin{bmatrix} (AB)^kA \\ (BA)^kB  \end{bmatrix} + r[\,(AB)^kA, \, (BA)^kB\,] -  r[(AB)^kA] - r[(BA)^kB],
\label{z22}
\\
r[\,(AB)^kA - (BA)^kB\,] &  = r\!\begin{bmatrix} A \\ B \end{bmatrix} + r[\,A, \, B\,] +  r[(AB)^kA] + r[(BA)^kB] - 2r(A) - 2r(B)
\label{z23}
\end{align}
hold for all integers $k \geq 1.$ In particular$,$ the following consequences hold$:$
\begin{enumerate}
\item[{\rm (a)}] $r[\,(AB)^k, \, (BA)^k\,] = r[(AB)^k] + r[(BA)^k]$ $\Leftrightarrow$ $r[\,A, \, B\,] = r(A) + r(B)$
 $\Leftrightarrow$ $\R[(AB)^k] \cap \R[(BA)^k] = \{0\}$ $\Leftrightarrow$ $\R(A) \cap \R(B) = \{0\}.$

\item[{\rm (b)}] $r[\,(AB)^k, \, (BA)^k\,]= r[\,A, \, B\,]$ $\Leftrightarrow$ $\R[(AB)^k] = \R(A)$ and $\R[(BA)^k] = \R(B).$

\item[{\rm (c)}] $(AB)^k = (BA)^k$ $\Leftrightarrow$ $\R[(AB)^k] = \R[(BA)^k]$ and $\R[(A^{\ast}B^{\ast})^k] = \R[(B^{\ast}A^{\ast}y)^k]$ $\Leftrightarrow$
$r[A, \, B\,] = r(A) + r(B) - r[(AB)^k]$ and $r\!\begin{bmatrix} A \\ B \end{bmatrix} = r(A) + r(B) - r[(BA)^k].$

\item[{\rm (d)}] $r[\,(AB)^kA, \, (BA)^kB\,] = r[(AB)^kA] + r[(BA)^kB]$ $\Leftrightarrow$ $r[\,A, \, B\,] = r(A) + r(B)$
 $\Leftrightarrow$ $\R[(AB)^kA] \cap \R[(BA)^kB] = \{0\}$ $\Leftrightarrow$ $\R(A) \cap \R(B) = \{0\}.$

\item[{\rm (e)}] $r[\,(AB)^kA, \, (BA)^kB\,]= r[\,A, \, B\,]$ $\Leftrightarrow$ $\R[(AB)^kA] = \R(A)$ and $\R[(BA)^kB] = \R(B).$

\item[{\rm (f)}] $(AB)^kA = (BA)^kB$ $\Leftrightarrow$ $\R[(AB)^kA] = \R[(BA)^kB]$ and $\R[(A^{\ast}B^{\ast})^kA^{\ast}] = \R[(B^{\ast}A^{\ast})^kB^{\ast}]$ $\Leftrightarrow$ $r[A, \, B\,] = r(A) + r(B) - r[(AB)^kA]$ and $r\!\begin{bmatrix} A \\ B \end{bmatrix} = r(A) + r(B) - r[(BA)^kB].$
\end{enumerate}
\end{theorem}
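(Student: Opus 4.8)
The plan is to reduce the entire theorem to the single row-block identity \eqref{z16}: once it is available, every other displayed equality follows by conjugate-transpose duality, by a direct application of Theorems~\ref{T1}--\ref{T3}, or by taking a linear combination, and the consequences (a)--(f) follow by rearrangement. Throughout I would lean on the elementary identities $A(AB)^k=(AB)^k=(AB)^kB$, $B(BA)^k=(BA)^k=(BA)^kA$, $A(BA)^k=(AB)^kA$, and $B(AB)^k=(BA)^kB$, all immediate from $A^2=A$ and $B^2=B$; in words, $(AB)^k$ is fixed by $A$ on the left and by $B$ on the right, dually for $(BA)^k$, and one ``shifts'' an $A$ or $B$ through a power at the cost of swapping $AB$ with $BA$.

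The crux is \eqref{z16} itself. For $k=1$ I would invoke Theorem~\ref{T3} with given matrices $A,B$ and solutions $X=A$, $Y=B$: the hypotheses $AX=X$, $BY=Y$ are idempotency, and $\R(A)\supseteq\R(AB)=\R(AY)$, $\R(B)\supseteq\R(BA)=\R(BX)$ are the range inclusions, so \eqref{z12} is exactly \eqref{z16} for $k=1$. The general $k$ is the main obstacle, and I do not expect a single application of Theorem~\ref{T3} to succeed, since demanding that $A,B$ be the solutions while $(AB)^k,(BA)^k$ appear as $AY,BX$ destroys the fixing conditions once $k\ge2$. Instead I would set up a telescoping induction from two further applications of Theorem~\ref{T3}. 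Taking solutions $X=(AB)^k$, $Y=(BA)^k$ (legitimate because $A(AB)^k=(AB)^k$, $B(BA)^k=(BA)^k$, and $\R[(AB)^kA]\subseteq\R[(AB)^k]$, $\R[(BA)^kB]\subseteq\R[(BA)^k]$) gives
\[
r[(AB)^kA,(BA)^kB]=r[(AB)^k,(BA)^k]+r[(AB)^kA]+r[(BA)^kB]-r[(AB)^k]-r[(BA)^k],
\]
and taking solutions $X=(AB)^{k-1}A$, $Y=(BA)^{k-1}B$ gives
\[
r[(AB)^k,(BA)^k]=r[(AB)^{k-1}A,(BA)^{k-1}B]+r[(AB)^k]+r[(BA)^k]-r[(AB)^{k-1}A]-r[(BA)^{k-1}B].
\]
Eliminating the term $r[(AB)^{k-1}A,(BA)^{k-1}B]$ between the second relation and the $(k-1)$-shift of the first collapses everything to $r[(AB)^k,(BA)^k]-r[(AB)^k]-r[(BA)^k]=r[(AB)^{k-1},(BA)^{k-1}]-r[(AB)^{k-1}]-r[(BA)^{k-1}]$ for $k\ge2$; since $k=1$ is \eqref{z16}, induction finishes it. The careful cancellation in this elimination is the only delicate bookkeeping.

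Everything else is then routine. Formula \eqref{z17} is the first displayed relation above combined with \eqref{z16}. Formulas \eqref{z18} and \eqref{z19} follow by substituting $(B^{\ast},A^{\ast})$ for $(A,B)$ in \eqref{z16}, \eqref{z17} and using $r\!\begin{bmatrix}P\\Q\end{bmatrix}=r[P^{\ast},Q^{\ast}]$ together with $((AB)^k)^{\ast}=(B^{\ast}A^{\ast})^k$, noting that $A^{\ast},B^{\ast}$ are idempotent again. Formula \eqref{z20} is Theorem~\ref{T1} with $M=A$, $X=(AB)^k$, $Y=(BA)^k$ (check $MX=X$, $YM=Y$, and $MY=A(BA)^k=(AB)^kA=XM$), while \eqref{z22} is Theorem~\ref{T2} with given $A,B$ and $X=(AB)^kA$, $Y=(BA)^kB$ (check $AX=X$, $YB=Y$, and $AY=A(BA)^kB=(AB)^{k+1}=(AB)^kAB=XB$). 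Finally \eqref{z21} is the sum \eqref{z20}$+$\eqref{z16}$+$\eqref{z18} after cancellation, and \eqref{z23} is \eqref{z22}$+$\eqref{z17}$+$\eqref{z19} in the same way.

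For the consequences I would rewrite \eqref{z16} as $r[(AB)^k,(BA)^k]-r[(AB)^k]-r[(BA)^k]=r[A,B]-r(A)-r(B)$, so the two sides vanish simultaneously; with the standard identity $r[P,Q]=r(P)+r(Q)-\dim(\R(P)\cap\R(Q))$ this yields (a), and the inclusions $\R[(AB)^k]\subseteq\R(A)$, $\R[(BA)^k]\subseteq\R(B)$ upgrade the rank-equality of (b) to the stated range equalities. For (c) the forward direction is trivial; for the converse, equal column and row spaces of $(AB)^k$ and $(BA)^k$ force $r[(AB)^k,(BA)^k]$, $r\!\begin{bmatrix}(AB)^k\\(BA)^k\end{bmatrix}$, $r[(AB)^k]$, $r[(BA)^k]$ to a common value, whereupon \eqref{z20} collapses to $r[(AB)^k-(BA)^k]=0$; feeding these range conditions through \eqref{z16} and \eqref{z18} restates them with $r[A,B]$ and $r\!\begin{bmatrix}A\\B\end{bmatrix}$. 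Items (d)--(f) are the verbatim analogues for $(AB)^kA,(BA)^kB$, obtained from \eqref{z17}, \eqref{z19}, \eqref{z22} by the identical three arguments.
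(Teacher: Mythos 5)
Your proposal is correct and follows essentially the same route as the paper's proof: the same two instantiations of Theorem \ref{T3} (with $X=(AB)^{k-1}A,\ Y=(BA)^{k-1}B$ and with $X=(AB)^k,\ Y=(BA)^k$) to get \eqref{z16} and \eqref{z17}, transposition for \eqref{z18}--\eqref{z19}, Theorem \ref{T1} with $M=A$ for \eqref{z20}, Theorem \ref{T2} for \eqref{z22}, and back-substitution for \eqref{z21} and \eqref{z23}. The only difference is that you spell out the telescoping induction on $k$ with the base case $X=A,\ Y=B$, which the paper's terse claim that \eqref{z12} ``becomes'' \eqref{z16} and \eqref{z17} leaves implicit --- a worthwhile clarification, since each single application of Theorem \ref{T3} yields only a recursion linking consecutive exponents, not \eqref{z16} itself.
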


\begin{proof}
Let $X = (AB)^{k-1}A$ and $Y =(BA)^{k-1}B$, as well as $X = (AB)^k$ and $Y =(BA)^k$, as well as  respectively.
Then they satisfy \eqref{z11}. In such cases, \eqref{z12} becomes \eqref{z16} and  \eqref{z17}, respectively.
Eqs. \eqref{z18} and \eqref{z19} are established by taking transpose of \eqref{z16} and  \eqref{z17}, respectively.

Let $M =A$, $X = (AB)^k$, and $Y =(BA)^k$. Then they satisfy \eqref{z1}, thus \eqref{z2} becomes \eqref{z20}.

Let $X = (AB)^kA$ and $Y = (BA)^kB$. Then they satisfy \eqref{z8}, thus \eqref{z9} becomes \eqref{z22}.

Substituting \eqref{z16}--\eqref{z19} into \eqref{z20} and \eqref{z22} yields \eqref{z21} and \eqref{z23}, respectively.
 Results (a)--(f) follow directly from  \eqref{z16}, \eqref{z17}, and \eqref{z20}--\eqref{z23}.
 \end{proof}

Some of \eqref{z16}--\eqref{z23} were established in the literature; see, e.g., \cite{TS1,TS2,TS3,ZZY,ZLLY}.
By a similar approach, we can also establish a general rank formula associated with a family of idempotent
matrices of the same size.

\begin{theorem} \label{Th25}
Let $A_1, A_2, \ldots, A_k$ be a family of idempotent matrices of the same size$,$ and
denote
\begin{align}
A=[A_1, A_2, \ldots, A_k] \ \  and \ \ \widehat{A}_i=[A_1, \ldots, A_{i-1}, \, 0, \, A_{i+1}, \ldots,  A_k].
\label{z24}
\end{align}
Then they satisfy the following rank identity
\begin{align}
 r[A_1\widehat{A}_1, A_2\widehat{A}_2,\ldots, A_k\widehat{A}_k] = r(A_1\widehat{A}_1) + r(A_2\widehat{A}_2) + \cdots + r(A_k\widehat{A}_k)   + r(A) - r(A_1) - r(A_2) - \cdots - r(A_k).
\label{z25}
\end{align}
In particular$,$ the following results hold$:$
\begin{enumerate}
\item[{\rm (a)}] $r[A_1\widehat{A}_1, A_2\widehat{A}_2,\ldots, A_k\widehat{A}_k] = r(A_1\widehat{A}_1) + r(A_2\widehat{A}_2) + \cdots + r(A_k\widehat{A}_k)$ if and only if $r(A) =  r(A_1) + r(A_2) + \cdots + r(A_k).$

 \item[{\rm (b)}] $r[A_1\widehat{A}_1, A_2\widehat{A}_2,\ldots, A_k\widehat{A}_k] = r(A)$ if and only if $\R(A_i\widehat{A}_i) = \R(A_i),$ $i = 1, 2, \ldots, k.$

 \item[{\rm (c)}] If $A_1\widehat{A}_1 = A_2\widehat{A}_2 = \ldots = A_k\widehat{A}_k = 0,$ then $r(A) = r(A_1) + r(A_2) + \cdots + r(A_k).$

 \item[{\rm (d)}] $r(A) \geq  r(A_1) + r(A_2) + \cdots + r(A_k) - r(A_1\widehat{A}_1) - r(A_2\widehat{A}_2) - \cdots - r(A_k\widehat{A}_k)$ holds$.$
\end{enumerate}
\end{theorem}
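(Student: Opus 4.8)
The plan is to imitate the three-block construction used in the proof of Theorem~\ref{T3}, but now with $k+1$ block rows, and to evaluate the rank of a single auxiliary matrix in two different ways. Concretely, I would set
\[
N = \begin{bmatrix}
A_1 & & & & A_1\widehat{A}_1 & & & \\
 & A_2 & & & & A_2\widehat{A}_2 & & \\
 & & \ddots & & & & \ddots & \\
 & & & A_k & & & & A_k\widehat{A}_k \\
A_1 & A_2 & \cdots & A_k & 0 & 0 & \cdots & 0
\end{bmatrix},
\]
whose top-left $k\times k$ block is ${\rm diag}(A_1,\ldots,A_k)$, whose top-right $k\times k$ block is ${\rm diag}(A_1\widehat{A}_1,\ldots,A_k\widehat{A}_k)$, and whose last block row is $[\,A,\,0\,]=[\,A_1,\ldots,A_k,\,0,\ldots,0\,]$. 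The two elementary facts that drive the argument are that $A_i\widehat{A}_i=A_i\widehat{A}_i$ factors through $A_i$, so $\R(A_i\widehat{A}_i)\subseteq\R(A_i)$, and the idempotency identity $A_iA=A_i\widehat{A}_i+E_i$, where $E_i:=[\,0,\ldots,0,A_i,0,\ldots,0\,]$ carries the block $A_i$ in its $i$th slot precisely because $A_i^2=A_i$.

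For the first evaluation, I would subtract the sum of the first $k$ block rows from the last block row; this clears the bottom-left $A$ and turns the bottom-right into $[\,-A_1\widehat{A}_1,\ldots,-A_k\widehat{A}_k\,]$. Then, writing $A_i\widehat{A}_i=A_i\widehat{A}_i$, the column operation that subtracts $\widehat{A}_i$ times the $i$th left block column from the $(k+i)$th block column annihilates the $(i,k+i)$ entry without disturbing the already-cleared bottom row. What remains decouples into ${\rm diag}(A_1,\ldots,A_k)$ and the single spread-out row $[\,-A_1\widehat{A}_1,\ldots,-A_k\widehat{A}_k\,]$, so that
\[
r(N)=r(A_1)+\cdots+r(A_k)+r[\,A_1\widehat{A}_1,\ldots,A_k\widehat{A}_k\,].
\]

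For the second evaluation I would instead use the right-hand cross terms to rebuild the full products $A_iA$: adding the $(k+i)$th block column to the left block group converts the $i$th diagonal entry $A_i=E_i$ into $E_i+A_i\widehat{A}_i=A_iA$, which is exactly the idempotency identity above, while leaving the last row equal to $A$. Subtracting $A_i$ times the last block row from the $i$th block row then clears each $A_iA$ (using $A_iA=A_i\cdot A$) and leaves $A_i\widehat{A}_i$ untouched. The result decouples again, the bottom row contributing $A$ and the diagonal right-hand blocks contributing the $A_i\widehat{A}_i$, so that $r(N)=r(A)+r(A_1\widehat{A}_1)+\cdots+r(A_k\widehat{A}_k)$. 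Equating the two values of $r(N)$ and rearranging yields \eqref{z25}. The consequences (a)--(d) then require no new construction: each follows by feeding the trivial bounds $0\le r[\,A_1\widehat{A}_1,\ldots,A_k\widehat{A}_k\,]\le r(A_1\widehat{A}_1)+\cdots+r(A_k\widehat{A}_k)$ and $r(A_i\widehat{A}_i)\le r(A_i)$ into \eqref{z25}; in particular (d) is just nonnegativity of the left-hand side, and (b) uses that equality in $r(A_i\widehat{A}_i)\le r(A_i)$ combined with $\R(A_i\widehat{A}_i)\subseteq\R(A_i)$ forces $\R(A_i\widehat{A}_i)=\R(A_i)$.

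I expect the main obstacle to be the bookkeeping in the second evaluation: I must check that the column operation turning the diagonal $A_i$ into $A_iA$ and the subsequent row operation clearing $A_iA$ by $A_i$ times the bottom row are genuine invertible block-elementary operations and that they reintroduce no entries elsewhere. This is exactly where idempotency is indispensable, since without $A_i^2=A_i$ the identity $A_iA=A_i\widehat{A}_i+E_i$ fails and the two evaluations no longer split cleanly enough to cancel the $A_i\widehat{A}_i$ terms against one another.
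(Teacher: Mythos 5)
Your proposal is correct and follows essentially the same route as the paper: the auxiliary matrix $N$ is exactly the paper's block matrix \eqref{z26}, and your two rank evaluations are the paper's \eqref{z27} and \eqref{z28}, with the only cosmetic difference that in the second evaluation you perform the column operations (rebuilding $A_iA = A_i\widehat{A}_i + E_i$ via idempotency) before the row operations, whereas the paper clears the diagonal first and then the off-diagonal blocks $-A_iA_j$. Your derivation of (a)--(d) from \eqref{z25} via the trivial bounds and the inclusion $\R(A_i\widehat{A}_i)\subseteq\R(A_i)$ is also how the paper treats these consequences.
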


\begin{proof}
From the given matrices, we construct a block matrix as follows
\begin{align}
M = \begin{bmatrix} A_1 & 0 & \cdots  & 0 & A_1\widehat{A}_1 & 0 & \cdots  & 0
\\
0 & A_2  & \cdots  & 0 & 0 & A_2\widehat{A}_2  & \cdots  & 0
\\
\vdots  & \vdots & \ddots  & \vdots & \vdots  & \vdots & \ddots  & \vdots
\\
0  & 0 & \cdots & A_k  & 0  & 0 & \cdots & A_k\widehat{A}_k
\\
 A_1 & A_2 & \cdots  & A_k & 0 & 0 & \cdots  & 0
\end{bmatrix}.
\label{z26}
\end{align}
We then apply elementary block matrix operations to this $X$ to obtain the following rank equality
\begin{align}
r(X)& = r\!\begin{bmatrix} A_1 & 0 & \cdots  & 0 &0 & 0 & \cdots  & 0
\\
0 & A_2  & \cdots  & 0 & 0 & 0 & \cdots  & 0
\\
\vdots  & \vdots & \ddots  & \vdots & \vdots  & \vdots & \ddots  & \vdots
\\
0  & 0 & \cdots & A_k  & 0  & 0 & \cdots & 0
\\
 0 & 0 & \cdots  & 0 & -A_1\widehat{A}_1 & -A_2\widehat{A}_2 & \cdots  & -A_k\widehat{A}_k
\end{bmatrix} \nb
\\
& = r(A_1) + r(A_2) + \cdots + r(A_k) + r[A_1\widehat{A}_1, A_2\widehat{A}_2,\ldots, A_k\widehat{A}_k].
\label{z27}
\end{align}
Also by elementary block matrix operations and the idempotency of $A_1, A_2, \ldots,  A_k$, we obtain the following rank equality
\begin{align}
r(M) & = r\!\begin{bmatrix} 0 & -A_1A_2 & \cdots  & -A_1A_k & A_1\widehat{A}_1 & 0 & \cdots  & 0
\\
-A_2A_1 & 0  & \cdots  & -A_2A_k & 0 & A_2\widehat{A}_2  & \cdots  & 0
\\
\vdots  & \vdots & \ddots  & \vdots & \vdots  & \vdots & \ddots  & \vdots
\\
-A_kA_1  & -A_kA_2  & \cdots & 0  & 0  & 0 & \cdots & A_k\widehat{A}_k
\\
 A_1 & A_2 & \cdots  & A_k & 0 & 0 & \cdots  & 0
\end{bmatrix} \nb
\\
& = r\!\begin{bmatrix} 0 & 0 & \cdots  & 0 & A_1\widehat{A}_1 & 0 & \cdots  & 0
\\
0 & 0  & \cdots  & 0 & 0 & A_2\widehat{A}_2  & \cdots  & 0
\\
\vdots  & \vdots & \ddots  & \vdots & \vdots  & \vdots & \ddots  & \vdots
\\
0  & 0 & \cdots & 0  & 0  & 0 & \cdots & A_k\widehat{A}_k
\\
 A_1 & A_2 & \cdots  & A_k & 0 & 0 & \cdots  & 0
\end{bmatrix} \nb
\\
& = r(A_1\widehat{A}_1) + r(A_2\widehat{A}_2) + \cdots + r(A_k\widehat{A}_k)  + r(A).
\label{z28}
\end{align}
Combining \eqref{z27} and \eqref{z28} leads to \eqref{z25}. Results (a) and (b) follow directly from  \eqref{z25}.
\end{proof}

Eq. \eqref{z25} shows that all idempotent matrices are linked one another through certain simple but nontrivial rank
formulas, so that we can conveniently use them to discuss relationships among all idempotent matrices under various
circumstances. It is easy to see that \eqref{z25} for $k =2$ and \eqref{z16} for $k =1$ are the same. For $k =3$ in \eqref{z25}, we obtain the following appealing results on the relationships among any three idempotent matrices of the same size.

\begin{corollary} \label{TW26}
Let $A,$ $B,$ and $C$ be three idempotent matrices of the same size$.$  Then
\begin{align}
r[\,A, \, B, \, C\,] & = r(A) + r(B) + r(C) - r[\,AB, \, AC\,] - r[\, BA, \, BC\,] - r[\, CA, \, CB\,] \nb
\\
&  \ \ \ + r[AB, \, AC, \, BA, \, BC, \, CA, \, CB].
\label{z29}
\end{align}
If $AB = BA,$  $AC = CA,$  and  $BC = CB,$ then
\begin{align}
r[\,A, \, B, \, C\,] = r(A) + r(B) + r(C) - r[\,AB, \, AC\,] - r[\, BA, \, BC\,] - r[\, CA, \, CB\,] + r[AB, \, AC, \, BC\,].
\label{z30}
\end{align}
In particular$,$ the following results hold$.$
\begin{enumerate}
\item[{\rm (a)}] $r[\,A, \, B, \, C\,]= r(A) + r(B) + r(C)$ if and only if $r[AB, \, AC, \, BA, \, BC, \, CA, \, CB] = r[\,AB, \, AC] + r[\, BA, \, BC\,] + r[\, CA, \, CB\,].$

 \item[{\rm (b)}] $r[AB, \, AC, \, BA, \, BC, \, CA, \, CB]  = r[\,A, \, B, \, C\,]$ if and only if
 $\R[\,AB, \, AC\,]  = \R(A),$ $\R[\, BA, \, BC\,] = \R(B),$  and $\R[\, CA, \, CB\,] = \R(C).$

\item[{\rm (c)}] If $AB = BA =AC = CA = BC = CB =0,$ then $r[\,A, \, B, \, C\,] = r(A) + r(B) + r(C).$

\item[{\rm (d)}] $r[\,A, \, B, \, C\,]  \geq r(A) + r(B) + r(C) - r[\,AB, \, AC\,] - r[\, AB, \, BC\,] - r[\, AC, \, BC\,]$ holds$.$
    \end{enumerate}
\end{corollary}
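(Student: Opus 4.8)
The plan is to recognize \eqref{z29} as nothing more than the $k=3$ instance of the general identity \eqref{z25}, so that the entire proof amounts to unwinding the block notation of Theorem \ref{Th25} and then specializing. First I would set $A_1 = A$, $A_2 = B$, $A_3 = C$ in \eqref{z24}, giving $A = [\,A,\, B,\, C\,]$ together with $\widehat{A}_1 = [\,0,\, B,\, C\,]$, $\widehat{A}_2 = [\,A,\, 0,\, C\,]$, and $\widehat{A}_3 = [\,A,\, B,\, 0\,]$. Multiplying out produces $A_1\widehat{A}_1 = [\,0,\, AB,\, AC\,]$, $A_2\widehat{A}_2 = [\,BA,\, 0,\, BC\,]$, and $A_3\widehat{A}_3 = [\,CA,\, CB,\, 0\,]$. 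Since deleting zero column blocks leaves the rank unchanged, I read off $r(A_1\widehat{A}_1) = r[\,AB,\, AC\,]$, $r(A_2\widehat{A}_2) = r[\,BA,\, BC\,]$, and $r(A_3\widehat{A}_3) = r[\,CA,\, CB\,]$, while the full horizontal concatenation $[\,A_1\widehat{A}_1,\, A_2\widehat{A}_2,\, A_3\widehat{A}_3\,]$ reduces, after discarding its three zero blocks, to the single row block whose rank is $r[\,AB,\, AC,\, BA,\, BC,\, CA,\, CB\,]$.

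Substituting these four quantities into \eqref{z25} and solving the resulting identity for $r[\,A,\, B,\, C\,]$ gives \eqref{z29} immediately, with no additional computation, because \eqref{z25} has already been established. To obtain \eqref{z30} I would then impose the three commutativity hypotheses $AB = BA$, $AC = CA$, $BC = CB$: these make the column blocks $BA$, $CA$, $CB$ coincide with $AB$, $AC$, $BC$ respectively, so that $r[\,AB,\, AC,\, BA,\, BC,\, CA,\, CB\,] = r[\,AB,\, AC,\, BC\,]$. Replacing the six-block rank by the three-block rank in \eqref{z29} yields \eqref{z30}.

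The four itemized consequences are then read off directly, each being the $k=3$ form of the corresponding part of Theorem \ref{Th25}. Part (a) is the rank-additivity criterion: equating $r[\,A,\, B,\, C\,]$ with $r(A)+r(B)+r(C)$ in \eqref{z29} is, after cancellation, equivalent to $r[\,AB,\, AC,\, BA,\, BC,\, CA,\, CB\,] = r[\,AB,\, AC\,] + r[\,BA,\, BC\,] + r[\,CA,\, CB\,]$. Part (b) specializes Theorem \ref{Th25}(b): the equality $r[\,AB,\, AC,\, BA,\, BC,\, CA,\, CB\,] = r[\,A,\, B,\, C\,]$ holds precisely when each summand $r(A_i\widehat{A}_i)$ attains its range maximum, i.e. $\R[\,AB,\, AC\,] = \R(A)$, $\R[\,BA,\, BC\,] = \R(B)$, and $\R[\,CA,\, CB\,] = \R(C)$. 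Part (c) follows because $AB = BA = AC = CA = BC = CB = 0$ forces every $A_i\widehat{A}_i$ to vanish, whereupon \eqref{z29} collapses to rank additivity. Finally, part (d) is the lower bound obtained by discarding the nonnegative joint term $r[\,AB,\, AC,\, BA,\, BC,\, CA,\, CB\,]$ in \eqref{z29}.

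I do not anticipate any genuine obstacle, since every step transcribes an already-proved general identity. The only point requiring care is the bookkeeping of the nine width-$m$ column blocks in the concatenation and the systematic deletion of the three zero blocks, together with verifying that the three column coincidences forced by commutativity are applied consistently when passing from \eqref{z29} to \eqref{z30}.
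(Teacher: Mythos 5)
Your proposal is correct and takes essentially the same route as the paper: the paper offers no separate proof of this corollary, presenting it simply as the $k=3$ specialization of Theorem \ref{Th25}, which is exactly what you carry out, and your bookkeeping of the blocks $A_1\widehat{A}_1=[\,0,\,AB,\,AC\,]$, $A_2\widehat{A}_2=[\,BA,\,0,\,BC\,]$, $A_3\widehat{A}_3=[\,CA,\,CB,\,0\,]$, the deletion of zero (and, under commutativity, repeated) column blocks, and the derivation of (a)--(c) via ${\mathscr R}(A_i\widehat{A}_i)\subseteq {\mathscr R}(A_i)$ are all accurate. One caveat concerning part (d): what your argument (and Theorem \ref{Th25}(d) at $k=3$) actually yields is $r[\,A,\,B,\,C\,]\geq r(A)+r(B)+r(C)-r[\,AB,\,AC\,]-r[\,BA,\,BC\,]-r[\,CA,\,CB\,]$, whereas the inequality as printed in the corollary has $r[\,AB,\,BC\,]$ and $r[\,AC,\,BC\,]$ in the last two terms; that printed form does not follow from discarding the nonnegative six-block term in \eqref{z29} and is evidently a misprint, so your proof establishes the intended statement rather than the printed one.
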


\begin{corollary} \label{TW27}
Let $A,$ $B,$ and $C$ be three matrices with the same row number$,$ and denote $P_{A} = AA^{\dag},$
$P_{B} = BB^{\dag},$ and $P_{C} = CC^{\dag}.$ Then
\begin{align}
r[\,A, \, B\,] & = r(A) + r(B) - r(P_{A}P_{B}) - r(P_{B}P_{A}) + r[P_{A}P_{B}, \, P_{B}P_{A} \,],
\label{}
\\
r[\,A, \, B, \, C\,] & = r(A) + r(B) + r(C) - r[\,P_{A}P_{B}, \, P_{A}P_{C}\,] - r[\, P_{B}P_{A}, \, P_{B}P_{C}\,] - r[\, P_{C}P_{A}, \, P_{C}P_{B}\,] \nb
\\
&  \ \ \ + r[P_{A}P_{B}, \, P_{A}P_{C}, \, P_{B}P_{A}, \, P_{B}P_{C}, \,
P_{C}P_{A}, \, P_{C}P_{B}].
\label{}
\end{align}
In particular$,$ the following results hold$.$
\begin{enumerate}
\item[{\rm (a)}] $r[\,A, \, B\,]  = r(A) + r(B)$ $\Leftrightarrow$ $r[P_AP_B, \, P_BP_A \,] = r(P_AP_B) + r(P_BP_A)$  $\Leftrightarrow$ ${\mathscr R}(A) \cap {\mathscr R}(B) =\{0\}$
     $\Leftrightarrow$ ${\mathscr R}(P_{A}P_{B}) \cap {\mathscr R}(P_{B}P_{A}) =\{0\}.$

\item[{\rm (b)}] $r[\,A, \, B\,]  = r(A) + r(B) - r(P_{A}P_{B})$ $\Leftrightarrow$ $r[P_AP_B, \, P_BP_A \,] = r(P_AP_B) = r(P_BP_A)$ $\Leftrightarrow$ $\R(P_AP_B) = \R(P_BP_A)$ $\Leftrightarrow$ $P_AP_B = P_BP_A.$

\item[{\rm (c)}] $r[\,A, \, B\,] = r[P_{A}P_{B}, \, P_{B}P_{A} \,]$ $\Leftrightarrow$
$r(A^*B) = r(A) = r(B).$

\item[{\rm (d)}] $r[\,A, \, B, \, C\,] = r(A) + r(B) + r(C)$ $\Leftrightarrow$
$r[P_{A}P_{B}, \, P_{A}P_{C}, \, P_{B}P_{A}, \, P_{B}P_{C}, \,
P_{C}P_{A}, \, P_{C}P_{B}] = r[\,P_{A}P_{B}, \, P_{A}P_{C}\,] + r[\, P_{B}P_{A}, \, P_{B}P_{C}\,] + r[\, P_{C}P_{A}, \, P_{C}P_{B}\,].$

\item[{\rm (e)}] $r[\,A, \, B, \, C\,] = r(A) + r(B) + r(C) - r(P_{A}P_{B}) - r(P_{A}P_{C}) - r(P_{B}P_{C})$ $\Leftrightarrow$ $r[P_{A}P_{B}, \, P_{A}P_{C}, \, P_{B}P_{A}, \, P_{B}P_{C}, \,
P_{C}P_{A}, \, P_{C}P_{B}] = r[\,P_{A}P_{B}, \, P_{A}P_{C}\,] + r[\, P_{B}P_{A}, \, P_{B}P_{C}\,] + r[\, P_{C}P_{A}, \, P_{C}P_{B}\,] - r(P_{A}P_{B}) - r(P_{A}P_{C}) - r(P_{B}P_{C})$.
\end{enumerate}
\end{corollary}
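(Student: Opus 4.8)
The plan is to recognize that $P_A$, $P_B$, and $P_C$ are orthogonal projectors, hence Hermitian idempotent matrices, so that the general identities for idempotents in Theorem \ref{T4} and Corollary \ref{TW26} apply verbatim to them. The only bridge I need is the elementary range fact that $\R(P_X) = \R(X)$ for $X = A, B, C$ (since $P_X = XX^{\dag}$ is the orthogonal projector onto $\R(X)$), which at once yields $r(P_X) = r(X)$ together with $r[\,P_A, \, P_B\,] = r[\,A, \, B\,]$ and $r[\,P_A, \, P_B, \, P_C\,] = r[\,A, \, B, \, C\,]$, because the column space of a row block matrix is the sum of the column spaces of its blocks. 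Granting this, the first displayed rank equality follows by applying \eqref{z16} with $k = 1$ to the idempotent pair $P_A, P_B$ (rearranged so that $r[\,A, \, B\,]$ sits on the left) and substituting the range identities, while the second displayed equality follows the same way from \eqref{z29} applied to the idempotent triple $P_A, P_B, P_C$.

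For the ``in particular'' list I would specialize the corresponding items of Theorem \ref{T4} and Corollary \ref{TW26}. Items (a)--(c) come from reading off Theorem \ref{T4}(a),(b) with $A, B$ replaced by $P_A, P_B$: the first rank equality shows $r[\,A, \, B\,] = r(A) + r(B)$ forces $r[\,P_AP_B, \, P_BP_A\,] = r(P_AP_B) + r(P_BP_A)$ and conversely, while the standard dimension identity $r[\,X, \, Y\,] = r(X) + r(Y) - \dim(\R(X)\cap\R(Y))$ converts each equality into the vanishing of the relevant range intersection, giving (a). Two projector-specific facts enter here: first, the Hermitian relation $(P_AP_B)^{\ast} = P_BP_A$, which gives $r(P_AP_B) = r(P_BP_A)$ and makes the two one-sided conditions in (b) symmetric; second, for (c), the rank identity $r(P_AP_B) = r(A^{\ast}B)$, which follows from $A^{\ast}P_A = A^{\ast}$ and $P_AB = A(A^{\ast}A)^{\dag}A^{\ast}B$, so that $A^{\ast}B$ and $P_AP_B$ share the same rank, after which $\R(P_AP_B) = \R(A)$ becomes $r(A^{\ast}B) = r(A)$ and likewise $\R(P_BP_A) = \R(B)$ becomes $r(A^{\ast}B) = r(B)$.

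Items (d) and (e) are then direct specializations of the second displayed equality together with Corollary \ref{TW26}(a): setting $r[\,A, \, B, \, C\,]$ equal to $r(A) + r(B) + r(C)$ (respectively to that sum minus $r(P_AP_B) + r(P_AP_C) + r(P_BP_C)$) and cancelling the equal terms leaves exactly the stated condition on the six-block rank, again using $r(P_XP_Y) = r(P_YP_X)$ to match the two-block ranks on the two sides.

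The main obstacle I anticipate is not the bookkeeping but the one genuinely nontrivial equivalence hidden in (b), namely $\R(P_AP_B) = \R(P_BP_A) \Leftrightarrow P_AP_B = P_BP_A$. The forward direction (commuting implies equal ranges) is trivial, but the converse needs a projector-theoretic argument rather than a rank manipulation; I would prove it through the simultaneous canonical form of two orthogonal projectors (Halmos' two-subspace decomposition), where on each nontrivial $2\times 2$ block one checks directly that the ranges of $P_AP_B$ and $P_BP_A$ coincide only when the principal angle is $0$, i.e. only when the blocks commute. The auxiliary rank identity $r(P_AP_B) = r(A^{\ast}B)$ in (c) is the other spot requiring a little care, though it reduces to the two one-line inclusions $A^{\ast}B = A^{\ast}P_AB$ and $P_AB = A(A^{\ast}A)^{\dag}A^{\ast}B$ noted above.
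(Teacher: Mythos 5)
Your proposal is correct and follows exactly the route the paper intends: Corollary \ref{TW27} is a specialization of \eqref{z16} (Theorem \ref{T4}) and \eqref{z29} (Corollary \ref{TW26}) to the idempotents $P_A, P_B, P_C$, using $\R(P_X)=\R(X)$ to replace $r(P_X)$, $r[\,P_A,\,P_B\,]$, and $r[\,P_A,\,P_B,\,P_C\,]$ by the corresponding ranks of $A$, $B$, $C$. The paper gives no explicit proof, and your filling-in of the two genuinely nontrivial projector facts — the Halmos two-subspace argument for $\R(P_AP_B)=\R(P_BP_A)\Leftrightarrow P_AP_B=P_BP_A$ in (b), and the identity $r(P_AP_B)=r(A^{\ast}B)$ in (c) — is accurate and exactly what is needed.
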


One of the well-known problems in the theory of generalized inverses is to determine the relationships
among generalized inverses of a block and its submatrices. For instance, $\begin{bmatrix} A^{-} \\ B^{-}
\end{bmatrix}$ is a generalized inverse of $[\,A, \, B\,]$ if and only if
$[\,A, \, B\,]\begin{bmatrix} A^{-} \\ B^{-} \end{bmatrix}[\,A, \, B\,] =
[\,A, \, B\,]$ by definition. On the other hand, it is easy to verify that
\begin{align}
[\,A, \, B\,] - [\,A, \, B\,]\begin{bmatrix} A^{-} \\ B^{-}
\end{bmatrix}[\,A, \, B\,] & = [\,A, \, B\,] - [\,(AA^{-} +  BB^{-})A, \,(AA^{-} +  BB^{-})B\,] \nb
\\
& = - [\,BB^{-}A,\, AA^{-}AB\,]
\label{z31a}
\end{align}
holds for all $A^{-}$ and $B^{-}.$ Applying \eqref{z25} to \eqref{z31a} yields the following result.

\begin{corollary} \label{TW28}
Let $A \in {\mathbb C}^{m\times n}$  and $B \in {\mathbb C}^{m\times p}.$ Then the two matrices and their generalized inverses
$ A^{-}$ and $B^{-}$ satisfy the following rank identity
\begin{align}
r\!\left([\,A, \, B\,] - [\,A, \, B\,]\begin{bmatrix} A^{-} \\ B^{-}
\end{bmatrix}[\,A, \, B\,] \right) =  r(AA^{-}B) + r(BB^{-}A) + r[\,A, \, B\,] - r(A) - r(B).
\label{z32}
\end{align}
In particular$,$ the following results hold$.$
\begin{enumerate}
\item[{\rm (a)}] The maximum and minimum ranks of \eqref{z31a} with respect to $A^{-}$ and $B^{-}$ are given by
\begin{align}
& \max_{A^{-}, B^{-}} r\!\left([\,A, \, B\,] - [\,A, \, B\,]\begin{bmatrix} A^{-} \\ B^{-}
\end{bmatrix}[\,A, \, B\,] \right) = r[A, \, B\,] - |r(A) - r(B)|,
\label{z33}
\\
& \min_{A^{-}, B^{-}} r\!\left([\,A, \, B\,] - [\,A, \, B\,]\begin{bmatrix} A^{-} \\ B^{-}
\end{bmatrix}[\,A, \, B\,] \right)  =  r(A) + r(B) - r[\,A,\, B\,] = \dim[\R(A) \cap \R(B)].
\label{z34}
\end{align}

\item[{\rm (b)}] $\{[\,A, \, B\,]^{-}\} \cap  \left\{\begin{bmatrix} A^{-} \\ B^{-}
\end{bmatrix} \right\} \neq \emptyset \Leftrightarrow r[\,A,\, B\,] = r(A) + r(B) \Leftrightarrow \R(A) \cap \R(B) = \{0\}.$

\item[{\rm (c)}] $\{[\,A, \, B\,]^{-}\} \supseteq  \left\{\begin{bmatrix} A^{-} \\ B^{-}
\end{bmatrix} \right\} \Leftrightarrow r[A, \, B\,] =|r(A) - r(B)|$ $\Leftrightarrow$ $A = 0$ or $ B= 0.$
\end{enumerate}
\end{corollary}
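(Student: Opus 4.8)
The plan is to reduce the identity \eqref{z32} to a single application of Theorem~\ref{T3}, and then to read off (a)--(c) from two extremal-rank formulas. The whole argument rests on recognizing $AA^-$ and $BB^-$ as idempotent matrices.

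I would begin from the reduction \eqref{z31a}: using only $AA^-A=A$ and $BB^-B=B$, the residual collapses to
\[
[\,A,\,B\,]-[\,A,\,B\,]\begin{bmatrix} A^- \\ B^- \end{bmatrix}[\,A,\,B\,]=-[\,BB^-A,\,AA^-B\,],
\]
so the matrix on the left of \eqref{z32} has rank $r[\,AA^-B,\,BB^-A\,]$, since rank is unaffected by an overall sign change and by reordering the two column blocks. The matrices $AA^-$ and $BB^-$ are idempotent, as noted in item~(a) of the discussion preceding the corollary. Hence I would invoke Theorem~\ref{T3} with the two prescribed matrices taken to be $AA^-$ and $BB^-$, and with $X=A$, $Y=B$. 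The four hypotheses \eqref{z11} then hold automatically: $(AA^-)A=A$ and $(BB^-)B=B$ give the first two equations, while $AA^-B=A(A^-B)$ and $BB^-A=B(B^-A)$ give the inclusions $\R((AA^-)B)\subseteq\R(A)$ and $\R((BB^-)A)\subseteq\R(B)$. Consequently \eqref{z12} becomes $r[\,AA^-B,\,BB^-A\,]=r[\,A,\,B\,]+r(AA^-B)+r(BB^-A)-r(A)-r(B)$, which is exactly \eqref{z32}.

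For part~(a), the decisive point is that in \eqref{z32} the inverse $A^-$ enters only through $AA^-B$ and $B^-$ only through $BB^-A$, while $r[\,A,\,B\,]$, $r(A)$, $r(B)$ are fixed; so $r(AA^-B)$ and $r(BB^-A)$ can be extremized independently. I would use the extremal-rank formulas $\max_{A^-}r(AA^-B)=\min\{r(A),r(B)\}$ and $\min_{A^-}r(AA^-B)=r(A)+r(B)-r[\,A,\,B\,]$, together with their symmetric counterparts for $r(BB^-A)$. These follow from the general parametrization $AA^-=P_A+AZE_A$ with $Z$ arbitrary, combined with $r(MB)=r(B)-\dim[\N(M)\cap\R(B)]$ applied to the projector $M=AA^-$, whose null space ranges over all complements of $\R(A)$. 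Substituting into \eqref{z32} and simplifying with the elementary identities $2\min\{r(A),r(B)\}-r(A)-r(B)=-|r(A)-r(B)|$ and $r(A)+r(B)-r[\,A,\,B\,]=\dim[\R(A)\cap\R(B)]$ yields \eqref{z33} and \eqref{z34}.

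Finally, parts~(b) and (c) follow from (a). The intersection in (b) is nonempty exactly when the residual can be forced to vanish, i.e. when the minimum rank \eqref{z34} equals $0$, giving $r[\,A,\,B\,]=r(A)+r(B)$, equivalently $\R(A)\cap\R(B)=\{0\}$. The inclusion in (c) holds exactly when the residual vanishes for \emph{every} pair $A^-,B^-$, i.e. when the maximum rank \eqref{z33} equals $0$, giving $r[\,A,\,B\,]=|r(A)-r(B)|$; since $r[\,A,\,B\,]\ge\max\{r(A),r(B)\}\ge|r(A)-r(B)|$ always holds, this chain of equalities forces $\min\{r(A),r(B)\}=0$, hence $A=0$ or $B=0$, and the converse is direct. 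I expect the only genuinely technical ingredient to be the two extremal-rank formulas in part~(a); the identity \eqref{z32} itself is essentially immediate once $AA^-$ and $BB^-$ are recognized as idempotent and Theorem~\ref{T3} is applied.
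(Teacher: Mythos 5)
Your proof is correct, and its overall skeleton matches the paper's: reduce the residual via \eqref{z31a}, apply the Section 2 rank-expansion machinery to obtain \eqref{z32}, then extremize $r(AA^{-}B)$ and $r(BB^{-}A)$ independently to read off (a)--(c). Two of your steps take a different route, though. First, you invoke Theorem~\ref{T3} directly with the pair $AA^{-},\,BB^{-}$ and $X=A$, $Y=B$, whereas the paper applies \eqref{z16} (Theorem~\ref{T4} with $k=1$) to the idempotents $AA^{-}$ and $BB^{-}$ and then simplifies $r(AA^{-}BB^{-})=r(AA^{-}B)$, $r[\,AA^{-},\,BB^{-}\,]=r[\,A,\,B\,]$, and so on; your choice of $X,Y$ skips that conversion step and is marginally cleaner (and, as your application implicitly shows, Theorem~\ref{T3} needs only the four conditions \eqref{z11}, not idempotency as such). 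Second, for the extremal values in part (a) the paper quotes the general max/min rank formulas \eqref{z37}--\eqref{z38} from \cite{T1} and specializes them to $AA^{-}B$ and $BB^{-}A$, while you derive $\max_{A^{-}}r(AA^{-}B)=\min\{r(A),\,r(B)\}$ and $\min_{A^{-}}r(AA^{-}B)=r(A)+r(B)-r[\,A,\,B\,]$ geometrically, from the fact that $\N(AA^{-})$ ranges over all complements of $\R(A)$ together with $r(MB)=r(B)-\dim[\N(M)\cap\R(B)]$. Both routes land on \eqref{z39}--\eqref{z40}; the paper's leans on ready-made machinery valid for any expression $D-CA^{-}B$, while yours is self-contained and exposes the projector geometry, at the cost of two attainability claims about complements (that the minimal and maximal values of $\dim[\N(AA^{-})\cap\R(B)]$ are actually achieved) which you only sketch but which do hold. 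Your arguments for (b) and (c) --- minimum rank zero, respectively maximum rank zero, plus $r[\,A,\,B\,]\geq\max\{r(A),\,r(B)\}$ to force $A=0$ or $B=0$ --- are complete and supply details the paper leaves implicit.
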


\begin{proof}
Noting that both $AA^{-}$ and $BB^{-}$ are idempotent and applying
\eqref{z16} to \eqref{z31a}, we obtain
\begin{align}
r[\,AA^{-}BB^{-},\, BB^{-}AA^{-}\,] & =   r(AA^{-}BB^{-}) + r(BB^{-}AA^{-}) + r[\,AA^{-},\, BB^{-}\,] -
r(AA^{-}) - r(BB^{-}) \nb
\\
& =   r(AA^{-}B) + r(BB^{-}A) + r[\,A, \, B\,] - r(A) - r(B).
\label{z36}
\end{align}
Applying the two known rank formulas
\begin{align}
\max_{A^{-}} r(\, D - CA^{-}B \,) &= \min  \left\{ r[ \, C, \, D \, ], \ \
r\!\begin{bmatrix}  B \\ D  \end{bmatrix}\!,  \ \
 r\!\begin{bmatrix}  A  & B \\ C   & D
\end{bmatrix} -r(A)
 \right\}\!,
\label{z37}
\\
\min_{A^{-}}r(\,D - CA^{-}B \,)&= r(A) + r[\, C, \, D \,] +
r\!\begin{bmatrix} B  \\ D \end{bmatrix} + r\begin{bmatrix} A  &  B  \\ C & D \end{bmatrix}  -
 r\!\begin{bmatrix} A  & 0  & B \\ 0   & C  & D
 \end{bmatrix} -
  r\!\begin{bmatrix}  A  &  0  \\ 0
 & B  \\ C & D  \end{bmatrix}
\label{z38}
\end{align}
in \cite{T1} to $AA^{-}B$ and $BB^{-}A$ gives
\begin{align}
& \max_{A^{-}} r(AA^{-}B) = \max_{B^{-}} r(BB^{-}A) = \min \{r(A), \ \  r(B)\},
\label{z39}
\\
& \min_{A^{-}} r(AA^{-}B) = \min_{B^{-}} r(BB^{-}A) = r(A) + r(B) - r[\,A, \, B\,].
\label{z40}
\end{align}
Substituting \eqref{z39} and \eqref{z40} into \eqref{z36} yields
\begin{align}
 \max_{A^{-}, B^{-}} r[\,AA^{-}BB^{-},\, BB^{-}AA^{-}\,] & =  2\min \{r(A), \ \  r(B)\} + r[\,A, \, B\,] - r(A) - r(B) \nb
 \\
 & =  r[A, \, B\,] - |r(A) - r(B)|,
\label{z41}
\\
\min_{A^{-}, B^{-}} r[\,AA^{-}BB^{-},\, BB^{-}AA^{-}\,] & = 2r(A) + 2r(B) - 2r[\,A, \, B\,] + r[\,A, \, B\,] - r(A) - r(B)  \nb
\\
& = r(A) + r(B) - r[\,A, \, B\,].
\label{z42}
\end{align}
Combining \eqref{z36} with \eqref{z41} and \eqref{z42} leads to \eqref{z33} and \eqref{z34}, respectively.
\end{proof}

More rank formulas can be derived from \eqref{z25} and their variations.  For example,
for any $A \in {\mathbb C}^{m\times n}$, $B \in {\mathbb C}^{m\times p}$, and $C \in {\mathbb C}^{m\times q},$ the following rank equality
\begin{align}
& r[\,AA^{-}[\,B, \, C\,], \, BB^{-}[\,A, \, C\,], \, CC^{-}[\,A, \,B\,]\,] \nb
\\
& = r[\,A,\, B, \,C\,] +  r[\,AA^{-}B, \, AA^{-}C\,] +r[\,BB^{-}A, \, BB^{-}C\,] + r[\,CC^{-}A, \, CC^{-}B\,] - r(A) - r(B) - r(C)
\label{z43}
\end{align}
holds for all $A^{-}$, $B^{-}$, and $C^{-}$. Thus it is easy to easy to derive from  \eqref{z38} and \eqref{z43} that
\begin{align}
 \min_{A^1, B^{-}, C^{-}} r[\,AA^{-}[\,B, \, C\,], \, BB^{-}[\,A, \, C\,], \, CC^{-}[\,A, \,B\,]\,] = r[\,A,\, B\,] + r[\,A,\,C\,] + r[\,B, \,C\,] -  2r[A,\, B, \,C].
\label{z44}
\end{align}
On the other hand, it was shown in \cite{Tian:2019} that
\begin{align}
 \dim(\R[\,A,\, B\,]\cap \R[\,A,\, C\,] \cap \R[\,B, \,C\,])  =
r[\,A,\, B\,] + r[\,A,\,C\,] + r[\,B, \,C\,] - 2r[\,A,\, B, \,C\,].
 \label{z45}
\end{align}
Thus we also have the following matrix rank minimization equality
\begin{align}
 \min_{A^{-}, B^{-}, C^{-}} r[\,AA^{-}[\,B, \, C\,], \, BB^{-}[\,A, \, C\,], \, CC^{-}[\,A, \, B\,]\,] = \dim(\R[\,A, \, B\,]\cap \R[\,A, \, C\,] \cap \R[\,B, \, C\,]).
\label{z46}
\end{align}
Prompted by \eqref{z31a}, we obtain the following equality
\begin{align}
[\,A, \, B, \, C\,] - [\,A, \, B,\, C\,]\begin{bmatrix} A^{-} \\ B^{-} \\ C^{-}
\end{bmatrix}[\,A, \, B, \, C\,] = [\,(\,BB^{-} +  CC^{-} \,)A, \, (\,AA^{-} + CC^{-} \,)B, \,  (\,AA^{-} + BB^{-} \,)C\,],
\label{z47}
\end{align}
where $AA^{-}$, $BB^{-}$, and $CC^{-}$ are idempotent matrices. In this case, it would be of interest to establish expansion formulas for calculating the rank of the right-hand side of \eqref{z47}.

\section[3]{Matrix identities composed by two or three idempotent matrices and their applications}

It has been noticed that two or more given idempotent matrices may satisfy various identities, while
these identities can be used to characterize algebraic properties of matrix expressions composed by idempotent matrices. In this section, we first revisit two known identities composed by two idempotent matrices, and then to establish a variety of novel identities composed by two or three idempotent matrices and their applications, including inverses, Moore--Penrose generalized inverses, Drazin generalized inverses, rank, range, and null spaces of
the matrix expressions.

\begin{theorem} \label{TK31}
Let $A$ and $B$ be two idempotent matrices of the order $m.$  Then the following two matrix identities
\begin{align}
&  (A - B)^2 +  (A + B - I_m)^2  = I_m,
\label{v31}
\\
& AB + BA + 4^{-1}I_m  = (\, A + B - 2^{-1}I_m  \,)^2,
\label{v32}
\end{align}
and the following five rank formulas
\begin{align}
&  r[(A - B)^2] =  r(A + B) +  r(2I_m - A - B) - m,
\label{v33}
\\
&  r[(I_m - A - B )^2] =  r(I_m + A - B) +  r(I_m - A + B) - m,
\label{v34}
\\
& r(AB + BA) = r(I_m - A - B ) + r(A + B) - m,
\label{v35}
\\
& r(I_m - AB - BA) = r[(\sqrt{5} -1)/2I_m + A + B] + r[(\sqrt{5} + 1)/2I_m - A - B] - m,
\label{v36}
\\
& r(2I_m - AB - BA) = r(I_m + A + B ) + r(2I_m - A - B) - m
\label{v37}
\end{align}
hold$.$ In particular$,$ the following facts hold
\begin{align}
&  (A - B)^2  =0 \Leftrightarrow  (I_m - A - B )^2  = I_m  \Leftrightarrow r(A + B) +  r(2I_m - A - B) = m,
\label{v38}
\\
&  (A - B)^2  = 2^{-1} I_m \Leftrightarrow  (I_m - A - B )^2  =  2^{-1}I_m,
\label{v39}
\\
&  (A - B)^2  = I_m \Leftrightarrow  (I_m - A - B )^2  = 0 \Leftrightarrow r(I_m + A - B) +  r(I_m - A + B) - m,
\label{v310}
\\
& AB + BA =  - 2I_m    \Leftrightarrow  (\, A + B - 2^{-1}I_m  \,)^2 = - \frac{7}{4}I_m,
\label{v311}
\\
& AB + BA =  -I_m    \Leftrightarrow  (\, A + B - 2^{-1}I_m  \,)^2 = - \frac{3}{4}I_m,
\label{v312}
\end{align}
\begin{align}
& AB + BA =  -4^{-1}I_m  \Leftrightarrow (\, A + B - 2^{-1}I_m  \,)^2 =0,
\label{v313}
\\
& AB + BA = 0  \Leftrightarrow  (\, A + B - 2^{-1}I_m  \,)^2 = 4^{-1}I_m \Leftrightarrow  r(I_m - A - B ) + r(A + B) = m,
\label{v314}
\\
& AB + BA = \frac{3}{4}I_m   \Leftrightarrow  (\, A + B - 2^{-1}I_m  \,)^2 = I_m,
\label{v315}
\\
& AB + BA = I_m   \Leftrightarrow  (\, A + B - 2^{-1}I_m  \,)^2 = \frac{5}{4}I_m   \Leftrightarrow r[(\sqrt{5} -1)/2I_m + A + B] + r[(\sqrt{5} + 1)/2I_m - A - B] = m,
\label{v316}
\\
& AB + BA = 2I_m   \Leftrightarrow (\, A + B - 2^{-1}I_m  \,)^2 = \frac{9}{4}I_m  \Leftrightarrow  r(I_m + A + B ) + r(2I_m - A - B) = m,
\label{v317}
\end{align}
and
\begin{align}
& r(A - B) =m  \Leftrightarrow  r(A + B) =  r(2I_m - A - B) = m,
\label{vv318}
\\
& r(I_m - A - B )  = m  \Leftrightarrow r(I_m + A - B) =  r(I_m - A + B) = m,
\label{vv319}
\\
& r(AB + BA) =m \Leftrightarrow r(I_m - A - B ) = r(A + B) = m,
\label{vv320}
\\
& r(I_m - AB - BA) = m \Leftrightarrow  r[(\sqrt{5} -1)/2I_m + A + B] = r[(\sqrt{5} + 1)/2I_m - A - B] = m,
\label{vv321}
\\
& r(2I_m - AB - BA) = m \Leftrightarrow r(I_m + A + B ) = r(2I_m - A - B) = m.
\label{vv322}
\end{align}
\end{theorem}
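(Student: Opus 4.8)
The plan is to derive the whole theorem from only two ingredients: the defining relations $A^2=A$ and $B^2=B$, and the single scalar rank identity \eqref{hh21}. \textbf{Step 1 (the two identities).} First I would verify \eqref{v31} and \eqref{v32} by direct expansion. Using $A^2=A$ and $B^2=B$ one finds $(A-B)^2 = A - AB - BA + B$ and $(A+B-I_m)^2 = AB + BA - A - B + I_m$, whose sum is $I_m$, giving \eqref{v31}; likewise $(A+B-2^{-1}I_m)^2 = (A+B)^2 - (A+B) + 4^{-1}I_m = AB + BA + 4^{-1}I_m$, giving \eqref{v32}. These steps are purely formal and use no rank theory.

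\textbf{Step 2 (the five rank formulas).} The key observation is that replacing $A$ by $G/c$ in \eqref{hh21} and then removing the harmless scalar factor $c^2$ from inside the rank yields, for every square matrix $G$ and every scalar $c \neq 0,$
\[
r(c^2 I_m - G^2) = r(cI_m + G) + r(cI_m - G) - m.
\]
I would then write each left-hand side of \eqref{v33}--\eqref{v37} as $\pm(c^2I_m - G^2)$ and read off the two factors $cI_m \pm G$. Concretely, \eqref{v31} gives $(A-B)^2 = I_m - G^2$ with $G = A+B-I_m,$ $c=1,$ and $(I_m-A-B)^2 = I_m - (A-B)^2$ with $G = A-B,$ $c=1,$ yielding \eqref{v33} and \eqref{v34}; and \eqref{v32} gives $AB+BA = -(4^{-1}I_m - G^2),$ $\ I_m - AB - BA = (5/4)I_m - G^2,$ and $2I_m - AB - BA = (9/4)I_m - G^2$ with $G = A+B-2^{-1}I_m$ and $c = 2^{-1},\ \sqrt{5}/2,\ 3/2$ respectively, yielding \eqref{v35}--\eqref{v37} once $cI_m \pm G$ is simplified.

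\textbf{Step 3 (the equivalences).} The identity-to-identity equivalences \eqref{v38}--\eqref{v317} drop out of \eqref{v31} and \eqref{v32}: prescribing the value of one square forces the value of the other, since $(A-B)^2 = cI_m$ gives $(A+B-I_m)^2 = (1-c)I_m$ and $AB+BA = cI_m$ gives $(A+B-2^{-1}I_m)^2 = (c+4^{-1})I_m.$ The rank-condition halves of \eqref{v38}, \eqref{v310}, \eqref{v314}, \eqref{v316}, \eqref{v317} then follow by setting the appropriate formula among \eqref{v33}--\eqref{v37} equal to zero. For the nonsingularity statements \eqref{vv318}--\eqref{vv322} I would use that a square matrix $C$ has $r(C)=m$ iff $r(C^2)=m$ (so that \eqref{v33} and \eqref{v34} govern $r(A-B)=m$ and $r(I_m-A-B)=m,$ while \eqref{v35}--\eqref{v37} govern the $AB+BA$ cases directly), together with the elementary fact that a sum of two ranks, each at most $m,$ equals $2m$ precisely when both summands equal $m.$

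\textbf{Main obstacle.} None of the steps is deep; the only real care is in Step 2, namely choosing the correct completion of the square and the matching constant $c$ for each of \eqref{v35}--\eqref{v37}, and checking that rank is preserved under the scalings by $c^2$ and by $-1.$ The value $c = \sqrt{5}/2$ in \eqref{v36}, forced by $1 + 4^{-1} = (\sqrt{5}/2)^2,$ is the only non-obvious choice, and it is exactly what produces the golden-ratio constants $(\sqrt{5}\mp 1)/2$ appearing in \eqref{v36} and \eqref{vv321}.
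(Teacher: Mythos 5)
Your proposal is correct and takes essentially the same route as the paper: verify \eqref{v31} and \eqref{v32} by direct expansion, apply \eqref{hh21} to obtain \eqref{v33}--\eqref{v37}, and read off \eqref{v38}--\eqref{vv322} as immediate consequences. Your rescaled identity $r(c^2I_m - G^2) = r(cI_m + G) + r(cI_m - G) - m$ merely makes explicit the substitution step that the paper leaves implicit when it says ``applying \eqref{hh21} to \eqref{v31} and \eqref{v32},'' and your choices of $G$ and $c$ (including $c=\sqrt{5}/2$ for \eqref{v36}) are exactly right.
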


\begin{proof}
Eqs.\,\eqref{v31} and \eqref{v32} follow from direct calculations, where \eqref{v31} was first given in \cite{Kat}; see also \cite{ASS,Bor,Nis,Sim}. Applying \eqref{hh21} to \eqref{v31} and \eqref{v32} yields
\eqref{v33}--\eqref{v37}. Eqs. \eqref{v38}--\eqref{vv322} are direct consequences of \eqref{v31}--\eqref{v37}.
\end{proof}

We next give a group of matrix identities composed by some linear combinations of two idempotent matrices and their products.

\begin{theorem}\label{TK32}
Let $A$ and $B$ be two idempotent matrices of the order $m,$ and let $k$
  be any positive integer$.$ Then the following factorization equalities
\begin{align}
\alpha AB + \beta BA & = (\, \alpha A +  \beta B \,)(\, A + B - I_m \,)
= (\, A + B - I_m \,) (\, \beta A +  \alpha B \,),
\label{ff31}
\\
\alpha ABA + \beta BAB & = (\, \alpha A +  \beta B \,)(\, A + B - I_m \,)^2
= (\, A + B - I_m \,)^2 (\, \beta A +  \alpha B \,),
\label{ff32}
\\
\alpha (AB)^k + \beta (BA)^k & = (\, \alpha A +  \beta B \,)(\, A + B - I_m \,)^{2k-1}
= (\, A + B - I_m \,)^{2k-1}(\, \beta A +  \alpha B \,),
\label{ff33}
\\
\alpha (ABA)^k + \beta (BAB)^k & = (\, \alpha A +  \beta B \,)(\, A + B - I_m \,)^{2k}
= (\, A + B - I_m \,)^{2k}(\, \beta A +  \alpha B \,)
\label{ff34}
\end{align}
hold for any two scalars $\alpha$ and $\beta.$  In particular$,$ $\alpha AB + \beta BA$ is nonsingular $\Leftrightarrow$  $\alpha (AB)^k + \beta (BA)^k$ is nonsingular $\alpha ABA + \beta BAB$ is nonsingular $\Leftrightarrow$ $\alpha (AB)^k + \beta (BA)^k$ is nonsingular
$\Leftrightarrow$ $\alpha (ABA)^k + \beta (BAB)^k$ is nonsingular $\Leftrightarrow$ both
$\alpha A + \beta B$ and $A + B - I_m$ are  nonsingular; in which cases$,$  the following equalities hold
\begin{align}
(\alpha AB + \beta BA)^{-1} & = (\, A + B - I_m \,)^{-1}(\, \alpha A +  \beta B \,)^{-1}
= (\, \beta A +  \alpha B \,)^{-1}(\, A + B - I_m \,)^{-1},
\label{dd37}
\\
(\alpha ABA + \beta BAB)^{-1} & = (\, A + B - I_m \,)^{-2}(\, \alpha A +  \beta B \,)^{-1}
=  (\, \beta A +  \alpha B \,)^{-1}(\, A + B - I_m \,)^{-2},
\label{dd38}
\\
[\alpha (AB)^k + \beta (BA)^k]^{-1} & = (\, A + B - I_m \,)^{-2k+1}(\, \alpha A +  \beta B \,)^{-1}
= (\, \beta A +  \alpha B \,)^{-1}(\, A + B - I_m \,)^{-2k+1},
\label{dd39}
\\
[\alpha (ABA)^k + \beta (BAB)^k]^{-1} & = (\, A + B - I_m \,)^{-2k}(\, \alpha A +  \beta B \,)^{-1}
= (\, \beta A +  \alpha B \,)^{-1}(\, A + B - I_m \,)^{-2k}.
\label{dd310}
\end{align}
\end{theorem}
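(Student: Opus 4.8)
The plan is to establish the four factorization identities \eqref{ff31}--\eqref{ff34} by direct algebraic manipulation, exploiting the idempotency relations $A^2 = A$ and $B^2 = B$ throughout. First I would prove \eqref{ff31}, which is the base case that drives everything else. Expanding the right-hand product $(\alpha A + \beta B)(A + B - I_m)$ and using $A^2 = A$, $B^2 = B$, the diagonal terms $\alpha A^2 - \alpha A$ and $\beta B^2 - \beta B$ collapse to zero, leaving exactly $\alpha AB + \beta BA$. The second expression $(A + B - I_m)(\beta A + \alpha B)$ is verified the same way; note the deliberate swap of $\alpha$ and $\beta$ on the right factor, which is what makes both products land on the same matrix $\alpha AB + \beta BA$. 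This swap is the one place where sign/index bookkeeping must be done carefully.

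Next I would leverage \eqref{ff31} recursively rather than re-expanding from scratch. The key observation is that $A + B - I_m$ intertwines the two factors: from \eqref{ff31} one reads off that left-multiplying $\alpha A + \beta B$ by powers of $A + B - I_m$ shuttles between the $(\alpha,\beta)$ and $(\beta,\alpha)$ orderings. Concretely, to get \eqref{ff32} I would write $(\alpha A + \beta B)(A+B-I_m)^2 = [(\alpha A+\beta B)(A+B-I_m)](A+B-I_m) = (\alpha AB + \beta BA)(A+B-I_m)$ and then verify that $(\alpha AB + \beta BA)(A+B-I_m) = \alpha ABA + \beta BAB$ by another short idempotent computation. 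The general cases \eqref{ff33} and \eqref{ff34} then follow by an easy induction on $k$, since each additional factor of $(A+B-I_m)$ either advances $(AB)^j \mapsto (AB)^jA$ or $(AB)^jA \mapsto (AB)^{j+1}$, matching the alternation between odd powers $2k-1$ and even powers $2k$ of $A+B-I_m$.

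For the nonsingularity equivalences, the factorizations do the work immediately: a product of square matrices is nonsingular if and only if every factor is nonsingular, so $\alpha AB + \beta BA = (\alpha A + \beta B)(A+B-I_m)$ is invertible iff both $\alpha A + \beta B$ and $A + B - I_m$ are invertible, and identically for the higher powers since a power of $A + B - I_m$ is nonsingular iff $A + B - I_m$ is. This chains all the listed expressions to the single common criterion. The inverse formulas \eqref{dd37}--\eqref{dd310} are then obtained by reversing a product: $(XY)^{-1} = Y^{-1}X^{-1}$ applied to each factorization, where the two alternative right-hand expressions in each line come from inverting the two alternative factorizations in \eqref{ff31}--\eqref{ff34} respectively.

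The main obstacle, such as it is, will be purely clerical: keeping the $\alpha/\beta$ ordering straight across the two factorizations in each identity, and confirming that the exponent parity ($2k-1$ versus $2k$) lines up with whether the expression ends in $A$/$B$ or is a pure power $(AB)^k$/$(BA)^k$. There is no deep difficulty here because idempotency makes every cross term simplify on the nose; the content lies entirely in organizing the induction and recording the correct index swaps. I would present \eqref{ff31} in full, indicate the one-line reduction for \eqref{ff32}, and state that \eqref{ff33}--\eqref{ff34} follow by induction, then read off the nonsingularity statements and inverse formulas directly.
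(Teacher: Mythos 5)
Your overall method coincides with the paper's: the paper's entire proof of this theorem is that \eqref{ff31}--\eqref{ff34} ``follow from direct expansions and simplifications'' and that \eqref{dd37}--\eqref{dd310} follow from the factorizations, and your proposal is just a cleaner recursive organization of that same computation. Your verification of \eqref{ff31}, of \eqref{ff33}, and of the \emph{left-hand} factorizations in \eqref{ff32} and \eqref{ff34} is sound, as is the reduction of the nonsingularity equivalences and inverse formulas to the factorizations.

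There is, however, a genuine problem sitting exactly in the step you never carry out: the right-hand factorizations for the \emph{even} powers. Your own key observation --- each left multiplication by $A+B-I_m$ swaps the $(\alpha,\beta)$ and $(\beta,\alpha)$ orderings --- implies that after an even number of such multiplications the ordering returns to $(\alpha,\beta)$, not to $(\beta,\alpha)$. Concretely, using \eqref{ff31} twice,
\begin{align*}
(\, A + B - I_m \,)^2(\, \beta A +  \alpha B \,) = (\, A + B - I_m \,)(\, \alpha AB +  \beta BA \,) = \beta ABA + \alpha BAB,
\end{align*}
which has the coefficients swapped relative to \eqref{ff32}; the printed identity is false whenever $\alpha \neq \beta$ and $ABA \neq BAB$. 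A concrete counterexample: $A=\begin{bmatrix}1&0\\0&0\end{bmatrix}$, $B=\begin{bmatrix}1&1\\0&0\end{bmatrix}$ are idempotent with $ABA=A$, $BAB=B$, and $(\,A+B-I_2\,)^2=I_2$, so the right-hand side of \eqref{ff32} equals $\beta A+\alpha B$ while the left-hand side equals $\alpha A+\beta B$. The correct even-power right factorizations are $(\,A+B-I_m\,)^{2}(\,\alpha A+\beta B\,)$ and $(\,A+B-I_m\,)^{2k}(\,\alpha A+\beta B\,)$ (no swap), and correspondingly the second expressions in \eqref{dd38} and \eqref{dd310} should read $(\,\alpha A+\beta B\,)^{-1}(\,A+B-I_m\,)^{-2}$ and $(\,\alpha A+\beta B\,)^{-1}(\,A+B-I_m\,)^{-2k}$. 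So asserting that \eqref{ff32} is ``verified the same way'' and that \eqref{ff34} ``follows by an easy induction'' is precisely where your proof breaks: the $\alpha/\beta$ swap is correct for the odd powers \eqref{ff31}, \eqref{ff33} and wrong for the even powers \eqref{ff32}, \eqref{ff34}, and an honest execution of your induction would refute the statement as printed rather than prove it (and would identify the needed correction).
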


\begin{proof}
Eqs.\,\eqref{ff31}--\eqref{ff34} follow from direct expansions and simplifications. Eqs.\,\eqref{dd37}--\eqref{dd310} follow from
\eqref{ff31}--\eqref{ff34}.
\end{proof}

It is no doubt that \eqref{ff31}--\eqref{ff34} can be used to approach performances of the matrix expressions on the left-hands under
various assumptions, such as, ranks, ranges, nullity, r-potency, nilpotency, nonsingularity, inverses, generalized inverses, norms, etc.
We next give some special cases of \eqref{ff31}--\eqref{ff34} and their variations, and present interesting consequences.

\begin{corollary}\label{TK33}
Let $A$ and $B$ be two idempotent matrices of the order $m,$ and let $k$
  be a positive integer$.$ Then$,$ the following matrix identities hold
\begin{align}
AB - BA & =  (\, A - B \,)(\, A + B - I_m \,) = - (\, A + B - I_m
\,)(\, A - B \,),
\label{w3}
\\
AB + BA & = (\, A + B \,)(\, A + B - I_m  \,) =  (\, A + B -
I_m\,)(\, A + B\,),
 \label{w4}
\\
ABA - BAB & = (\, A - B \,)(\, A + B - I_m \,)^2 = (\, A + B - I_m \,)^2(\, A - B \,),
\label{w14}
\\
ABA + BAB & = (\, A + B \,)(\, A + B - I_m \,)^2 =(\, A + B - I_m \,)^2(\, A + B \,).
\label{w15}
\end{align}
\begin{align}
(\,AB - BA\,)^k  & = (-1)^{k(k-1)/2}(\, A - B \,)^k(\,  A + B - I_m \,)^k  = (-1)^{k(k-1)/2}(\, I_m - A - B \,)^k(\, A  - B \,)^k,
\label{w6}
\\
(\,AB + BA \,)^k & = (\, A + B  \,)^k(\,  A + B - I_m \,)^k = (\,  A + B - I_m \,)^k(\, A + B \,)^k, \ \ \ \ \ \
\label{w7}
\\
(\, ABA - BAB\,)^k & = (\,  A - B \,)^k(\, A + B - I_m \,)^{2k} = (\,  A + B - I_m\,)^{2k}(\,  A - B\,)^k,
\label{w8}
\\
(\,ABA + BAB\,)^k & = (\, A + B  \,)^k(\,  A + B - I_m  \,)^{2k} = (\,  A + B - I_m \,)^{2k}(\,  A  +  B\,)^k,
\label{w9}
\\
(AB)^k - (BA)^k & = (\, A - B \,)(\,  A + B - I_m  \,)^{2k-1} = - (\, A + B - I_m \,)^{2k-1}(\, A  - B\,),
\label{w10}
\\
(AB)^k + (BA)^k & = (\, A + B \,)(\,  A + B - I_m  \,)^{2k-1} = (\, A + B - I_m \,)^{2k-1}(\, A + B \,),
\label{w11}
\\
  (ABA)^k - (BAB)^k & =  (\, A - B \,)(\,  A + B - I_m  \,)^{2k} = (\, A + B - I_m\,)^{2k}(\, A - B\,),
\label{w12}
\\
(ABA)^k + (BAB)^k & = (\, A + B \,)(\, A + B - I_m  \,)^{2k}  =(\,  A + B - I_m \,)^{2k}(\, A + B\,).
\label{w13}
\end{align}
In addition$,$ the following matrix identities hold
\begin{align}
& AB - BA  + (AB)^2 - (BA)^2 + \cdots + (AB)^k - (BA)^k  \nb
\\
& = (A - B)[(\, A + B - I_m \,) + (\,  A + B - I_m  \,)^3  + \cdots +  (\,  A + B - I_m  \,)^{2k-1}] \nb
\\
& = [(\, A + B - I_m \,) + (\,  A + B - I_m  \,)^3  + \cdots +  (\,  A + B - I_m  \,)^{2k-1}](B - A),
\\
& AB + BA  + (AB)^2 + (BA)^2 + \cdots + (AB)^k + (BA)^k  \nb
\\
& = (A + B)[(\, A + B - I_m \,) + (\,  A + B - I_m  \,)^3  + \cdots +  (\,  A + B - I_m  \,)^{2k-1}] \nb
\\
& = [(\, A + B - I_m \,) + (\,  A + B - I_m  \,)^3  + \cdots +  (\,  A + B - I_m  \,)^{2k-1}](A+B),
\\
& ABA - BAB  + (ABA)^2 - (BAB)^2 + \cdots + (ABA)^k - (BAB)^k  \nb
\\
& = (A - B)[(\, A + B - I_m \,)^2 + (\,  A + B - I_m  \,)^4  + \cdots +  (\,  A + B - I_m  \,)^{2k}] \nb
\\
& = [(\, A + B - I_m \,)^2 + (\,  A + B - I_m  \,)^4  + \cdots +  (\,  A + B - I_m  \,)^{2k}](A - B),
\\
& ABA + BAB  + (ABA)^2 + (BAB)^2 + \cdots + (ABA)^k + (BAB)^k  \nb
\\
& = (A + B)[(\, A + B - I_m \,)^2 + (\,  A + B - I_m  \,)^4  + \cdots +  (\,  A + B - I_m  \,)^{2k}] \nb
\\
& = [(\, A + B - I_m \,)^2 + (\,  A + B - I_m  \,)^4  + \cdots +  (\,  A + B - I_m  \,)^{2k}](A + B).
\end{align}
\end{corollary}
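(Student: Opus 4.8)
The plan is to reduce every one of these identities to one of the two normal forms $(A-B)(A+B-I_m)^{n}$ or $(A+B)(A+B-I_m)^{n}$, and then to read off the two-sided versions and the $k$-th powers from a single pair of commutation rules. Writing $C = A - B$, $S = A + B$, and $D = A + B - I_m$ throughout, I would first specialize Theorem \ref{TK32}: taking $(\alpha,\beta) = (1,-1)$ and $(\alpha,\beta) = (1,1)$ in the left equalities of \eqref{ff31}--\eqref{ff34} yields the eight identities $AB - BA = CD$, $AB + BA = SD$, $ABA - BAB = CD^{2}$, $ABA + BAB = SD^{2}$, $(AB)^{k} - (BA)^{k} = CD^{2k-1}$, $(AB)^{k} + (BA)^{k} = SD^{2k-1}$, $(ABA)^{k} - (BAB)^{k} = CD^{2k}$, and $(ABA)^{k} + (BAB)^{k} = SD^{2k}$. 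These already give the left-hand equalities of \eqref{w3}, \eqref{w4}, \eqref{w14}, \eqref{w15}, \eqref{w10}, \eqref{w11}, \eqref{w12}, and \eqref{w13}.

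The second ingredient is two commutation rules extracted from the idempotency relations $DA = (A+B-I_m)A = BA$, $DB = AB$, $AD = AB$, and $BD = BA$. These give $DC = DA - DB = BA - AB = -CD$ and $DS = DA + DB = BA + AB = SD$, i.e.\ $C$ anticommutes with $D$ while $S$ commutes with $D$. Consequently $CD^{n} = (-1)^{n}D^{n}C$ and $SD^{n} = D^{n}S$ for every $n$, so $C$ commutes with every even power of $D$ and anticommutes with every odd power. Feeding $n = 1, 2, 2k-1, 2k$ into these relations produces the right-hand equalities of \eqref{w3}, \eqref{w4}, \eqref{w14}, \eqref{w15} and of \eqref{w10}--\eqref{w13}; in particular the minus sign in the second form of \eqref{w3} and of \eqref{w10} is exactly the odd-power sign ($n = 1$ and $n = 2k-1$ respectively), while the even powers in \eqref{w14} and \eqref{w12} commute outright and so carry no sign.

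For the pure power identities \eqref{w6}--\eqref{w9} I would raise the four single-factor identities to the $k$-th power. Since $S$ commutes with $D$ one has $(SD)^{k} = S^{k}D^{k} = D^{k}S^{k}$, giving \eqref{w7}, and since $C$ and $S$ each commute with $D^{2}$ one has $(CD^{2})^{k} = C^{k}D^{2k}$ and $(SD^{2})^{k} = S^{k}D^{2k}$, giving \eqref{w8} and \eqref{w9}. The one place that needs care, and the main obstacle, is the sign in \eqref{w6}: I would prove by induction on $k$ that the anticommutation $CD = -DC$ forces $(CD)^{k} = (-1)^{k(k-1)/2}C^{k}D^{k}$, the inductive step using $D^{k}C = (-1)^{k}CD^{k}$ to pick up the extra factor $(-1)^{k}$ that upgrades the exponent $k(k-1)/2$ to $(k+1)k/2$; equivalently, $\binom{k}{2}$ transpositions are needed to sort the alternating word $CDCD\cdots CD$ into $C^{k}D^{k}$. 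The second form of \eqref{w6} then follows from $I_m - A - B = -D$, so that $(I_m - A - B)^{k} = (-1)^{k}D^{k}$, together with $C^{k}D^{k} = (-1)^{k}D^{k}C^{k}$.

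Finally, the four telescoped sums are obtained by adding the relevant identity over $j = 1, \ldots, k$ and pulling the common factor outside the sum. Summing $(AB)^{j} - (BA)^{j} = CD^{2j-1}$ gives $C\sum_{j=1}^{k}D^{2j-1}$ on the left and $-\bigl(\sum_{j=1}^{k}D^{2j-1}\bigr)C$ on the right, the latter sign converting the trailing factor $A - B$ into the displayed $B - A$; the sums built from $(AB)^{j} + (BA)^{j}$, $(ABA)^{j} - (BAB)^{j}$, and $(ABA)^{j} + (BAB)^{j}$ are handled identically using $SD^{2j-1}$, $CD^{2j}$, and $SD^{2j}$, where the even powers commute and hence leave no sign change.
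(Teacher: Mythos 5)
Your proof is correct, and it diverges from the paper's treatment in a way that matters. The paper states Corollary \ref{TK33} without any proof, intending it as a direct specialization of Theorem \ref{TK32} at $(\alpha,\beta)=(1,\pm 1)$ ``and their variations.'' You take the same starting point but invoke only the \emph{left} equalities of \eqref{ff31}--\eqref{ff34}, and then manufacture every two-sided form, the powers \eqref{w6}--\eqref{w9}, and the four telescoped sums from the two commutation rules $DC=-CD$ and $DS=SD$ (with $C=A-B$, $S=A+B$, $D=A+B-I_m$), which you derive from scratch via $AD=DB=AB$ and $BD=DA=BA$. This extra care is not cosmetic: the \emph{right} equalities of \eqref{ff32} and \eqref{ff34} as printed in the paper carry the swapped factor $\beta A+\alpha B$, which is valid only for \emph{odd} powers of $D$; for even powers the correct right factor is $\alpha A+\beta B$ with no swap, precisely because $C$ and $S$ commute with $D^2$. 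Had you specialized the printed right equality of \eqref{ff32} at $(\alpha,\beta)=(1,-1)$, you would have obtained $ABA-BAB=-(A+B-I_m)^2(A-B)$, contradicting the correct identity \eqref{w14}; your commutation-rule route produces the right sign and thus silently corrects this misprint in Theorem \ref{TK32}. Note also that \eqref{w6}--\eqref{w9} are not literal special cases of Theorem \ref{TK32} at all: they need exactly the power bookkeeping you supply, and your induction $(CD)^k=(-1)^{k(k-1)/2}C^kD^k$ (step: $k(k-1)/2+k=k(k+1)/2$), combined with $(I_m-A-B)^k=(-1)^kD^k$ and $C^kD^k=(-1)^{k}D^kC^k$, is the right way to land the sign in \eqref{w6}. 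The treatment of the telescoped sums---summing $CD^{2j-1}$, $SD^{2j-1}$, $CD^{2j}$, $SD^{2j}$ over $j$ and moving $C$ or $S$ across the sum, with the odd-power anticommutation converting the trailing $A-B$ into $B-A$---is likewise correct and matches the displayed identities exactly.
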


\begin{corollary}\label{TK34}
Let $A$ and $B$ be two idempotent matrices of the order $m,$ and
let $k$ be a positive integer$.$ Then$,$ the following results hold$.$
\begin{enumerate}
\item[{\rm (a)}] ${\mathscr R}[\, (AB - BA)^k\,] \subseteq
{\mathscr R}[\,(A - B)^k\,]$ and ${\mathscr R}[\,(AB - BA)^k\,]
\subseteq {\mathscr R}[\,(A + B - I_m)^k\,];$

\item[{\rm (b)}] ${\mathscr R}[\, (AB + BA)^k\,] \subseteq
{\mathscr R}[\,(A + B)^k\,]$ and ${\mathscr R}[\,(AB + BA)^k\,]
\subseteq {\mathscr R}[\,(A + B - I_m)^k\,];$

\item[{\rm (c)}]  ${\mathscr N}[\,(A - B)^k\,] \subseteq {\mathscr
N}[\, (AB - BA)^k\,]$ and ${\mathscr N}[\,(A + B -I_m)^k\,]
\subseteq {\mathscr N}[\, (AB - BA)^k\,];$

\item[{\rm (d)}]  ${\mathscr N}[\,(A + B)^k\,] \subseteq {\mathscr
N}[\, (AB + BA)^k\,]$ and ${\mathscr N}[\,(A + B -I_m)^k\,]
\subseteq {\mathscr N}[\, (AB + BA)^k\,].$
\end{enumerate}
\end{corollary}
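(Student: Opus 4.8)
The plan is to obtain all eight inclusions by reading them off directly from the factorization identities \eqref{w6} and \eqref{w7} of Corollary \ref{TK33}, using only two elementary facts valid for arbitrary conformable matrices $P$ and $Q$: namely $\mathscr{R}(PQ) \subseteq \mathscr{R}(P)$ and $\mathscr{N}(Q) \subseteq \mathscr{N}(PQ)$, together with the observation that multiplying a matrix by a nonzero scalar changes neither its range nor its null space.

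First I would dispose of the range inclusions (a) and (b). By \eqref{w6}, $(AB - BA)^k$ equals a nonzero scalar multiple of $(A - B)^k(A + B - I_m)^k$, whose left factor is $(A - B)^k$; this yields $\mathscr{R}[(AB - BA)^k] \subseteq \mathscr{R}[(A - B)^k]$. The same line of \eqref{w6} also expresses $(AB - BA)^k$ as a nonzero scalar multiple of $(I_m - A - B)^k(A - B)^k$, and since $(I_m - A - B)^k = (-1)^k(A + B - I_m)^k$ its left factor is a scalar multiple of $(A + B - I_m)^k$, giving the second inclusion of (a). For (b) I would use the two forms $(A + B)^k(A + B - I_m)^k$ and $(A + B - I_m)^k(A + B)^k$ supplied by \eqref{w7}, reading off the left factor in each to obtain $\mathscr{R}[(AB + BA)^k] \subseteq \mathscr{R}[(A + B)^k]$ and $\mathscr{R}[(AB + BA)^k] \subseteq \mathscr{R}[(A + B - I_m)^k]$.

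The null-space inclusions (c) and (d) follow by the dual reading, now invoking $\mathscr{N}(Q) \subseteq \mathscr{N}(PQ)$ and inspecting the right factor of each product. From the two forms of \eqref{w6} the right factors are $(A + B - I_m)^k$ and $(A - B)^k$, which give $\mathscr{N}[(A + B - I_m)^k] \subseteq \mathscr{N}[(AB - BA)^k]$ and $\mathscr{N}[(A - B)^k] \subseteq \mathscr{N}[(AB - BA)^k]$, that is, the two statements of (c); from the two forms of \eqref{w7} the right factors are $(A + B - I_m)^k$ and $(A + B)^k$, which yield the two statements of (d).

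Since each inclusion is an immediate consequence of a factorization already in hand, I do not anticipate any substantive obstacle. The only point needing care is the harmless bookkeeping of the scalar $(-1)^{k(k-1)/2}$ and of the sign identity $(I_m - A - B)^k = (-1)^k(A + B - I_m)^k$, both of which drop out because range and null space are invariant under multiplication by a nonzero scalar.
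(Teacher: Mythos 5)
Your proposal is correct and matches the paper's intent exactly: the paper states this corollary without a separate proof, precisely because it is the immediate read-off from the factorizations \eqref{w6} and \eqref{w7} of Corollary \ref{TK33} that you carry out (range of a product lies in the range of the left factor, null space of the right factor lies in the null space of the product, with scalars and the sign $(I_m-A-B)^k=(-1)^k(A+B-I_m)^k$ irrelevant to ranges and null spaces). No gaps.
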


\begin{corollary}\label{TK35}
  Let $A$ and $B$ be two idempotent matrices of the order $m,$ and let
$k$ be a positive integer$.$ Then$,$ the following matrix identities hold
\begin{align}
& (\, AB - BA \,)^D   = (\, A - B \,)^D(\, A + B - I_m \,)^D  =  - (\, A + B - I_m \,)^D(\, A - B \,)^D,
\label{w24}
\\
& (\, AB + BA \,)^D  = (\, A + B \,)^D(\, A + B - I_m  \,)^D  = (\, A + B - I_m \,)^D(\,  A + B\,)^D,
\label{w25}
\\
& \left[\, (AB)^k - (BA)^k  \,\right]^D  =
(\, A - B \,)^D \left[\, (\,  A + B - I_m  \, )^D \,\right]^{2k-1}  = - \left[(\, A + B - I_m \,)^D \right]^{2k-1}(\, A  - B\,)^D,
\label{w26}
\\
& \left[\, (AB)^k + (BA)^k  \, \right]^D   = (\, A + B \,)^D\left[(\,  A + B - I_m  \,)^D\right]^{2k-1} = \left[(\, A + B - I_m \,)^D\right]^{2k-1}(\, A  + B\,)^D,
\label{w27}
\\
& \left[ (ABA)^k - (BAB)^k \right]^D  = (\, A - B \,)^D \left[(\,  A + B - I_m  \,)^D \right]^{2k}  = \left[(\,  A + B - I_m  \,)^D \right]^{2k}
(\, A - B\,)^D,
\label{w28}
\\
& \left[ (ABA)^k + (BAB)^k \right]^D  = (\, A + B \,)^D \left[(\,  A
+ B - I_m  \,)^D \right]^{2k} = \left[(\,  A + B - I_m\,)^D\right]^{2k} (\, A + B\,)^D.
 \label{w29}
\end{align}
\end{corollary}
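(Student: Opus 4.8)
The plan is to read off every left-hand side as a two-factor product via the factorization identities of Corollary~\ref{TK33}, and then to invoke only three standard properties of the Drazin inverse: the power rule $(M^n)^D = (M^D)^n$, the scalar rule $(cM)^D = c^{-1}M^D$ for $c \neq 0$, and the commuting-product rule $(PQ)^D = P^D Q^D = Q^D P^D$, valid whenever $PQ = QP$. Writing $S = A - B$, $U = A + B$, and $T = A + B - I_m$, Corollary~\ref{TK33} presents each target expression as one of $U\,T^{2k-1}$, $U\,T^{2k}$, $S\,T^{2k}$, or $S\,T^{2k-1}$, so the whole corollary reduces to computing the Drazin inverse of such products.

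First I would dispose of the commuting cases. Comparing the two factorizations printed in each line of Corollary~\ref{TK33} shows that $U$ is permutable with every power of $T$, while $S$ is permutable with every \emph{even} power of $T$ but \emph{anti}-commutes with every odd power. Consequently the three ``$+$'' families \eqref{w25}, \eqref{w27}, \eqref{w29} and the even-power ``$-$'' family \eqref{w28} are genuine products of commuting factors, and the commuting-product rule together with the power rule yields the stated formulas at once; in each of these the two displayed orderings of the factors coincide automatically, since commuting matrices may be Drazin-inverted in either order.

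The delicate cases, and the real obstacle, are the anti-commuting products $AB - BA = ST$ in \eqref{w24} and $(AB)^k - (BA)^k = S\,T^{2k-1}$ in \eqref{w26}, where the commuting-product rule is simply unavailable. Here I would use the reduction identity $M^D = M\,(M^2)^D$ together with $(S\,T^{2k-1})^2 = -\,S^2 T^{4k-2}$, which follows from the anti-commutation $S\,T^{2k-1} = -\,T^{2k-1}S$. The crucial structural input is that $S$ and $T^2$ are permutable — equivalently that $(A-B)^2$ and $(A+B-I_m)^2$ commute, which is immediate from $(A-B)^2 + (A+B-I_m)^2 = I_m$ in Theorem~\ref{TK31}. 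Since $(S^D)^2 = (S^2)^D$ is a polynomial in $S^2$, it also commutes with $T$ and its powers; applying the scalar and commuting rules to $(S\,T^{2k-1})^2 = -\,S^2 T^{4k-2}$, multiplying back by $S\,T^{2k-1}$, and collapsing the result through $S(S^D)^2 = S^D$ and $T^{2k-1}(T^D)^{4k-2} = (T^D)^{2k-1}$ expresses the Drazin inverse of $S\,T^{2k-1}$ as a single product of $S^D$ with a power of $T^D$.

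The one point that demands genuine care — and where I expect the bookkeeping to be most error-prone — is the precise \emph{order and sign} of the two factors in these anti-commuting cases: the reduction above forces a definite ordering, and the anti-commutation $S^D T^D = -\,T^D S^D$ of the Drazin inverses (inherited from $ST = -TS$, since each Drazin inverse is a polynomial in its argument while $S$ commutes with $T^2$) then allows passage to the reversed product only at the cost of a sign. The hard part will therefore be to fix which of the two displayed expressions carries the minus sign; I would pin this down by evaluating on a nonsingular core block, for instance a $2\times 2$ model pair of idempotents for which $S$ and $T$ are both invertible and $(ST)^{-1} = T^{-1}S^{-1}$ settles the placement of the sign in \eqref{w24} and \eqref{w26} unambiguously.
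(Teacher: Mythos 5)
Your strategy is sound, and it is genuinely different from the paper's. The paper's entire proof is two lines: it quotes Corollary 5 of \cite{Ti-pmd} in the form \eqref{w30}, namely $MN=\pm NM \Rightarrow (MN)^D=N^DM^D$, and applies it to the factorizations \eqref{w3}, \eqref{w4}, \eqref{w10}--\eqref{w13}, so the commuting and anticommuting products are dispatched uniformly by one cited lemma. You instead split the cases: the commuting ones ($A+B$ against any power of $T=A+B-I_m$, and $S=A-B$ against even powers of $T$) go through the standard commuting-product rule, while the anticommuting ones are re-derived from scratch via $M^D=M(M^2)^D$, the identity $(ST^{2k-1})^2=-S^2T^{4k-2}$, and the observation that $S^2=I_m-T^2$ (equation \eqref{v31}) makes all the even objects commute with everything. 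Your route is longer but self-contained---it needs no external reverse-order law for Drazin inverses---and it isolates exactly where the sign is generated, which the paper's citation hides.

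Carrying your reduction to the end settles the sign you left open, and it settles it \emph{against} the printed statement. Your own steps give
\begin{align*}
(ST^{2k-1})^D = ST^{2k-1}\bigl[(ST^{2k-1})^2\bigr]^D
= -\,ST^{2k-1}(S^D)^2(T^D)^{4k-2}
= -\,S^D(T^D)^{2k-1}
= (T^D)^{2k-1}S^D,
\end{align*}
the last equality because $S^D=S(S^2)^D$ and $T^D=T(T^2)^D$ anticommute. Hence $(AB-BA)^D=(A+B-I_m)^D(A-B)^D=-(A-B)^D(A+B-I_m)^D$, and likewise for \eqref{w26}; these are the \emph{negatives} of the printed \eqref{w24} and \eqref{w26}. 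This is also exactly what the paper's own lemma \eqref{w30} produces (it yields the reversed product $N^DM^D$), so the corollary as printed carries a sign error in its two anticommuting identities, while the commuting identities \eqref{w25}, \eqref{w27}, \eqref{w28}, \eqref{w29} are unaffected. Your proposed nonsingular test confirms this: for $A=\begin{bmatrix}1&0\\0&0\end{bmatrix}$ and $B=\tfrac12\begin{bmatrix}1&1\\1&1\end{bmatrix}$ the matrices $AB-BA$, $A-B$, $A+B-I_2$ are all invertible, and
\begin{align*}
(AB-BA)^{-1}=(A+B-I_2)^{-1}(A-B)^{-1}=\begin{bmatrix}0&-2\\2&0\end{bmatrix},
\qquad
(A-B)^{-1}(A+B-I_2)^{-1}=\begin{bmatrix}0&2\\-2&0\end{bmatrix},
\end{align*}
so the first product, not the second, is the Drazin (here ordinary) inverse of $AB-BA$. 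In short: your proof plan is correct, your instinct that the order-and-sign question is the crux was exactly right, and completing your argument proves the corrected version of \eqref{w24} and \eqref{w26} rather than the version printed in the statement.
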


\begin{proof}
It follows from  \cite[Corollary 5]{Ti-pmd} that
\begin{align}
MN = \pm NM  \Rightarrow (MN)^D =N^DM^D.
\label{w30}
\end{align}
Applying this result to \eqref{w3}, \eqref{w4}, and \eqref{w10}--\eqref{w13} yields the results in this
corollary.
\end{proof}

\begin{remark}\label{TK36}
{\rm The Drazin inverses of $A \pm B$ for two idempotents $A$ and
$B$ were studied, and some formulas for $(\, A \pm  B\,)^D$ were
derived, see e.g.,  \cite{CD,Den1,Den2,HWW,ZW}. Substituting the
results in these papers into the equalities in Corollary \ref{TK35} may yield
a variety of equalities for the Drazin inverses of $AB \pm BA$ and
$(AB)^k \pm (BA)^k$, etc.
}
\end{remark}

A group of identities for $(AB)^k$, $(ABA)^k$, and $A - ABA$ are given
in the following theorem. Their verifications are straightforward.

\begin{theorem} \label{TK37}
Let $A$ and $B$ be two idempotent matrices of the order $m,$ and let $k$ be a positive integer$.$ Then$,$ the following matrix identities hold
\begin{align}
A - ABA & = A(\, A - B \,)^2 = (\, A - B \,)^2A,
\label{w32}
\\
B - BAB & = B(\, A - B \,)^2 = (\, A - B \,)^2B,
\label{w33}
\\
(\, A - ABA \,)^k & = A(\, A - B \,)^{2k} = (\, A - B \,)^{2k}A,
\label{w34}
\\
(\, B - BAB \,)^k & = B(\, A - B \,)^{2k} = (\, A - B \,)^{2k}B,
\label{w35}
\\
ABA & = A(\, A + B - I_m \,)^2 = (\, A + B -I_m \,)^2A,
\label{w36}
\\
BAB & = B(\, A +  B - I_m \,)^2 = (\, A + B  - I_m \,)^2B,
\label{w37}
\\
(ABA)^k & = A(\, A + B - I_m \,)^{2k} = (\, A + B -I_m \,)^{2k}A,
\label{w38}
\\
(BAB)^k & = B(\, A +  B - I_m \,)^{2k} = (\, A + B  - I_m \,)^{2k}B,
\label{w39}
\\
(BA)^2 & = BA(\, A + B - I_m \,)^2 = B(\, A + B -I_m \,)^2A,
\label{w40}
\\
(AB)^2 & = AB(\, A +  B - I_m \,)^2 = A(\, A + B  - I_m \,)^2B,
\label{w41}
\\
(AB)^k & = A(\, A + B - I_m \,)^{k}B,
\label{w42}
\\
(BA)^k & = B(\, A +  B - I_m \,)^{k}A.
\label{w43}
\end{align}
\end{theorem}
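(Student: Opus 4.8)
The plan is to treat all twelve identities \eqref{w32}--\eqref{w43} as consequences of three elementary facts about the two auxiliary matrices $(A-B)^2$ and $S^2$, where $S := A + B - I_m$, together with the idempotency relations $A^k = A$ and $B^k = B$. First I would expand the two squares directly, obtaining $(A-B)^2 = A + B - AB - BA$ and $S^2 = I_m - A - B + AB + BA$; these are the only genuine computations in the argument. Multiplying each on the left or right by $A$ or $B$ and cancelling via $A^2 = A$, $B^2 = B$ then yields the base cases at once: for instance $A(A-B)^2 = A - ABA$ and $AS^2 = ABA$, which are \eqref{w32} and \eqref{w36}, and symmetrically \eqref{w33}, \eqref{w37}.

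The key structural observation I would isolate next is that each of $A$ and $B$ commutes with both $(A-B)^2$ and with $S^2$. The computations of the first step in fact show $A(A-B)^2 = (A-B)^2 A = A - ABA$ and $AS^2 = S^2 A = ABA$, and the same one-line checks give the $B$-versions. This single commutation fact is what makes every $k$th-power identity fall out mechanically: once $A$ commutes with $(A-B)^2$ one has $(A-ABA)^k = [A(A-B)^2]^k = A^k (A-B)^{2k} = A(A-B)^{2k}$, and likewise $(ABA)^k = (AS^2)^k = A^k S^{2k} = A S^{2k}$, with the two-sided forms coming from the same commutation. This disposes of \eqref{w34}, \eqref{w35}, \eqref{w38}, \eqref{w39}.

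For the mixed quadratic identities \eqref{w40}, \eqref{w41} I would record the four linear absorption relations $AS = SB = AB$ and $SA = BS = BA$, each immediate from $A^2 = A$, $B^2 = B$; substituting $S^2$ and applying these collapses $BAS^2$ and $BS^2 A$ to $(BA)^2$, and symmetrically for $(AB)^2$. For the alternating-product identities \eqref{w42}, \eqref{w43} the cleanest route is to invoke the already-established \eqref{w10} and \eqref{w11} of Corollary \ref{TK33}: their half-sum gives $(AB)^k = A S^{2k-1}$ and their half-difference gives $(BA)^k = B S^{2k-1}$, after which the absorption relations $AS = AB$ and $SB = AB$ are used to slide one factor of $S$ into the flanking $A$ and $B$. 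An equivalent self-contained alternative is a short induction on $k$ driven directly by those same absorption relations.

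The step I expect to be the main obstacle is the exponent bookkeeping in this last pair: a power of $S$ telescopes an alternating word $ABAB\cdots$, and the length of that word must be tracked precisely against $k$, so that $AS = AB$ and $SB = AB$ are applied exactly the right number of times and no flanking letter is inadvertently absorbed or duplicated. Everything preceding it is routine once the commutation of $A$ and $B$ with the two squares $(A-B)^2$ and $S^2$ has been verified, since from that point the idempotency $A^k = A$, $B^k = B$ carries each single-factor identity to all higher powers automatically.
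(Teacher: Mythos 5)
Your handling of \eqref{w32}--\eqref{w41} is correct and complete: the expansions $(A-B)^2=A+B-AB-BA$ and $S^2=I_m-A-B+AB+BA$ with $S=A+B-I_m$, the observation that $A$ and $B$ commute with both squares, idempotency to pass to $k$th powers, and the absorption relations $AS=SB=AB$, $SA=BS=BA$ for \eqref{w40}--\eqref{w41} all check out. Since the paper offers no argument beyond the sentence that the verifications are straightforward, your organization of the computation is, for these ten identities, at least as good as what the paper provides.

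The genuine gap is exactly at the step you flagged as the expected obstacle, and it is not a bookkeeping difficulty that can be overcome: \eqref{w42} and \eqref{w43} are \emph{false} as stated for $k\geq 3$, so no manipulation with the absorption relations can complete your argument. Your own intermediate results are correct and show why: the half-sum of \eqref{w10} and \eqref{w11} gives $(AB)^k=AS^{2k-1}$, and right-multiplying by $B$ (legitimate, since $(AB)^kB=(AB)^k$) gives $(AB)^k=AS^{2k-1}B$; alternatively, iterating $AS^2=ABA$ gives $A(S^2)^{k-1}=(AB)^{k-1}A$ and hence $(AB)^k=AS^{2k-2}B$. The exponent of $S$ flanked by $A$ and $B$ is therefore $2k-2$ (or $2k-1$ with a single flanking factor), which agrees with the paper's exponent $k$ only when $k=1$ or $k=2$. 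A concrete counterexample: take $A=\begin{bmatrix} 1 & 1 \\ 0 & 0\end{bmatrix}$ and $B=\begin{bmatrix} 1 & 0 \\ 1 & 0\end{bmatrix}$, both idempotent; then $AB={\rm diag}(2,0)$ and $S^2=2I_2$, so for $k=3$ the left side of \eqref{w42} is $(AB)^3={\rm diag}(8,0)$ while the right side is $AS^3B=2ASB=2AB={\rm diag}(4,0)$. So the correct statements are $(AB)^k=A(A+B-I_m)^{2k-2}B=A(A+B-I_m)^{2k-1}$ and $(BA)^k=B(A+B-I_m)^{2k-2}A=B(A+B-I_m)^{2k-1}$; with that correction, your proof goes through verbatim, and your derivation in fact exposes an error in the theorem as printed rather than a flaw in your method.
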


The following result is derived from Theorem \ref{TK37} and \eqref{w30}.

\begin{corollary} \label{TK38}
  Let $A$ and $B$ be two idempotent
matrices of the order $m,$ and let $k$ be a positive integer$.$ Then$,$ the following matrix identities hold
\begin{align}
(\, A - ABA \,)^D & = A[\,(\, A - B \,)^D\,]^2 =
[\,(\, A - B \,)^D\,]^2A,
\label{w44}
\\
(\, B - BAB \,)^D & = B[\,(\, A - B \,)^D\,]^2 =
[\,(\, A - B \,)^D\,]^2B,
\label{w45}
\\
(ABA)^D & = A[\,(\, A + B - I_m \,)^D\,]^2 =
[\,(\, A + B - I_m \,)^D\,]^2A,
\label{w46}
\\
(BAB)^D & = B[\,(\, A + B - I_m \,)^D\,]^2 =
[\,(\, A + B - I_m \,)^D\,]^2B.
\label{w47}
\end{align}
\end{corollary}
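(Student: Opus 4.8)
The plan is to derive Corollary \ref{TK38} directly from the factorization identities in Theorem \ref{TK37} by invoking the commutativity-based Drazin inverse rule \eqref{w30}, exactly as was done for Corollary \ref{TK35}. The key observation is that each identity in \eqref{w32}, \eqref{w33}, \eqref{w36}, and \eqref{w37} expresses the target matrix as a product of two matrices that commute, so the Drazin inverse of the product factors as the product of the Drazin inverses in reverse order. First I would treat \eqref{w44}: from \eqref{w32} we have $A - ABA = A(A-B)^2 = (A-B)^2 A$, so the two factors $A$ and $(A-B)^2$ commute, i.e. $A \cdot (A-B)^2 = (A-B)^2 \cdot A$. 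Applying \eqref{w30} with $M = (A-B)^2$ and $N = A$ (using the $+$ sign, since these commute) gives $(A - ABA)^D = A^D [\,(A-B)^2\,]^D$.

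The main technical point I would need to address is twofold. First, since $A$ is idempotent, $A^D = A$ (the Drazin inverse of an idempotent matrix is itself, as $A^2 = A$ forces index $1$ and $A^\# = A$); this is what lets $A^D$ be replaced by $A$ in the final formulas. Second, I must reconcile $[\,(A-B)^2\,]^D$ with $[\,(A-B)^D\,]^2$. The needed fact is that for any square matrix $C$ one has $(C^2)^D = (C^D)^2$, which follows because $C$ and $C$ trivially commute, so \eqref{w30} (with $M = N = C$) gives $(C^2)^D = C^D C^D = (C^D)^2$. Taking $C = A - B$ then yields $[\,(A-B)^2\,]^D = [\,(A-B)^D\,]^2$, and combining this with $A^D = A$ produces exactly \eqref{w44}. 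The identity $(A - ABA)^D = [\,(A-B)^D\,]^2 A$ follows symmetrically from the right-hand factorization $(A-B)^2 A$ in \eqref{w32}.

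The remaining three formulas are handled by the same recipe with only the matrices relabelled. For \eqref{w45} I would use \eqref{w33}, writing $B - BAB = B(A-B)^2 = (A-B)^2 B$, invoking $B^D = B$ (again $B$ idempotent) together with $[\,(A-B)^2\,]^D = [\,(A-B)^D\,]^2$. For \eqref{w46} and \eqref{w47} I would use the factorizations $ABA = A(A+B-I_m)^2 = (A+B-I_m)^2 A$ from \eqref{w36} and $BAB = B(A+B-I_m)^2 = (A+B-I_m)^2 B$ from \eqref{w37}, combined with $A^D = A$, $B^D = B$, and the analogous reduction $[\,(A+B-I_m)^2\,]^D = [\,(A+B-I_m)^D\,]^2$.

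I do not anticipate a genuine obstacle here, since the structural work has already been done in Theorem \ref{TK37}; the proof is essentially a transcription that applies \eqref{w30} term by term. The only subtlety worth flagging explicitly is that \eqref{w30} as stated gives $(MN)^D = N^D M^D$, so one must keep careful track of the order of the factors in each product to land on the correct left- and right-hand versions of each identity — but because the two factors commute in every case, this ordering issue is harmless and the symmetry of the two expressions in each display is automatic.
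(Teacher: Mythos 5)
Your proposal is correct and takes essentially the same route as the paper, whose entire proof is the remark that the corollary ``is derived from Theorem~\ref{TK37} and \eqref{w30}.'' The details you supply --- that $A^D=A$ and $B^D=B$ for idempotents, and that $[\,(A-B)^2\,]^D=[\,(A-B)^D\,]^2$ (likewise for $A+B-I_m$) via \eqref{w30} with $M=N$ --- are exactly the steps the paper leaves implicit.
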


If $A$ and $B$ happen to be orthogonal projectors of the same size,
the results in the previous theorems  and corollaries are all valid.
In such cases, the matrices $A \pm B$ and $A + B - I_m$  are
Hermitian, and the results in the theorems  and corollaries  can be
simplified further. In particular, note that $M^{D} = M^{\dag}$ for
a Hermitian matrix. Hence  Corollaries  \ref{TK35} and \ref{TK38} reduce to the
following results.

\begin{corollary} \label{TK39}
Let $A$ and $B$ be two orthogonal projectors of the order $m,$
and let $k$ be a positive integer$.$  Then$,$ the following matrix identities hold
\begin{align}
 (\, AB - BA\,)^{\dag} & = -(\, A - B \,)^{\dag}(\, A + B - I_m \,)^{\dag} =
 (\, A + B - I_m \,)^{\dag}(\, A - B \,)^{\dag},
\label{w48}
\\
(\, AB + BA \,)^{\dag} & = (\, A + B \,)^{\dag}(\, A + B - I_m \,)^{\dag} =
(\, A + B - I_m \,)^{\dag}(\,  A + B\,)^{\dag},
\label{w49}
\\
\left[\, (AB)^k - (BA)^k  \,\right]^{\dag} & = (\, A - B \,)^{\dag}
\left[\, (\,  A + B - I_m  \,)^{\dag}\,\right]^{2k-1} = - \left[\, (\, A + B - I_m \,)^{\dag} \,\right]^{2k-1}(\, A  - B\,)^{\dag},
\label{w51}
\\
\left[\, (AB)^k + (BA)^k  \,\right]^{\dag} & = (\, A + B \,)^{\dag}\left[\,
(\,  A + B - I_m  \,)^{\dag} \,\right]^{2k-1}  = \left[\, (\, A + B - I_m \,)^{\dag}\,\right]^{2k-1}(\, A  +  B \,)^{\dag},
\label{w53}
\\
\left[\, (ABA)^k - (BAB)^k \,\right]^{\dag} & = (\, A - B \,)^{\dag}\left[\,
(\,  A + B - I_m  \,)^{\dag} \,\right]^{2k}  = \left[\,(\,  A + B - I_m  \,)^{\dag}  \,\right]^{2k}(\, A - B\,)^{\dag},
\label{w55}
\\
\left[\, (ABA)^k - (BAB)^k \,\right]^{\dag} & = (\, A + B \,)^{\dag}
\left[\,(\,  A + B - I_m  \,)^{\dag}\,\right]^{2k} = \left[\,(\,  A + B - I_m  \,)^{\dag} \,\right]^{2k}
(\, A + B\,)^{\dag},
\label{w57}
\\
(\, A - ABA \,)^{\dag} & = A\left[\,(\, A - B \,)^{\dag}\,\right]^2 =
\left[\,(\, A - B \,)^{\dag}\,\right]^2A,
\label{w58}
\\
(\, B - BAB \,)^{\dag} & = B\left[\,(\, A - B \,)^{\dag}\,\right]^2 =
\left[\,(\, A - B \,)^{\dag}\,\right]^2B,
\label{w59}
\\
(ABA)^{\dag} & = A\left[\,(\, A + B - I_m \,)^{\dag}\,\right]^2 =
\left[\,(\, A + B - I_m \,)^{\dag}\,\right]^2A,
\label{w60}
\\
(BAB)^{\dag} & = B\left[\,(\, A + B - I_m \,)^{\dag}\,\right]^2 =
\left[\,(\, A + B - I_m \,)^{\dag}\,\right]^2B.
\label{w61}
\end{align}
\end{corollary}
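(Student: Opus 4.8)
The plan is to derive every identity of Corollary~\ref{TK39} from its Drazin-inverse counterpart in Corollaries~\ref{TK35} and~\ref{TK38} by replacing each Drazin inverse with the corresponding Moore--Penrose inverse. The engine is the stated fact that $M^D = M^{\dag}$ for any Hermitian $M$, so I would first check that every matrix whose generalized inverse occurs (on \emph{both} sides of each identity) is covered by this principle. Since $A$ and $B$ are now orthogonal projectors we have $A^{\ast}=A$ and $B^{\ast}=B$; hence the three building blocks occurring on all the right-hand sides, namely $A-B$, $A+B$, and $A+B-I_m$, are Hermitian, and consequently $(A-B)^D = (A-B)^{\dag}$, $(A+B)^D = (A+B)^{\dag}$, and $(A+B-I_m)^D = (A+B-I_m)^{\dag}$. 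The same equality then holds for the powers $[(A+B-I_m)^D]^{\,j} = [(A+B-I_m)^{\dag}]^{\,j}$ that appear in \eqref{w26}--\eqref{w29}.

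Next I would classify the left-hand sides. Using $A^{\ast}=A$, $B^{\ast}=B$ and the fact that conjugate transposition reverses products, the matrices $AB+BA$, $(AB)^k+(BA)^k$, $(ABA)^k\pm(BAB)^k$, $ABA$, $BAB$, $A-ABA$, and $B-BAB$ are all Hermitian, so for these the passage $M^D\to M^{\dag}$ is immediate and converts \eqref{w25}, \eqref{w27}, \eqref{w28}, \eqref{w29} and \eqref{w44}--\eqref{w47} into \eqref{w49}, \eqref{w53}, \eqref{w55}, \eqref{w57} and \eqref{w58}--\eqref{w61}. The two remaining left-hand sides, $AB-BA$ and $(AB)^k-(BA)^k$, instead satisfy $M^{\ast}=-M$, i.e.\ they are skew-Hermitian, so the cited Hermitian principle does not literally apply to them.

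To cover the skew-Hermitian case I would observe that if $M^{\ast}=-M$ then $iM$ is Hermitian, whence $(iM)^D=(iM)^{\dag}$; combined with the scalar rules $(cM)^D=c^{-1}M^D$ and $(cM)^{\dag}=c^{-1}M^{\dag}$, valid for any $c\neq 0$, this yields $M^D=M^{\dag}$ for skew-Hermitian $M$ as well. (Equivalently, every Hermitian or skew-Hermitian matrix is normal, and the Drazin and Moore--Penrose inverses coincide on normal matrices.) With this in hand, \eqref{w24} and \eqref{w26} convert termwise into \eqref{w48} and \eqref{w51}.

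The step that needs the most care is the sign bookkeeping in precisely these two skew-Hermitian identities. Because $AB-BA = (A-B)(A+B-I_m) = -(A+B-I_m)(A-B)$ is built from two \emph{anti-commuting} Hermitian factors, the reverse-order behaviour of the generalized inverse carries a sign, and one must ensure that the signs recorded in \eqref{w48} and \eqref{w51} are those forced by the factorizations \eqref{w3} and \eqref{w10} together with \eqref{w30}, rather than merely transcribed from the Drazin statements. I would pin these signs down by applying \eqref{w30} directly to the Hermitian factors $A-B$ and $A+B-I_m$ (and, for \eqref{w51}, to $A-B$ and the odd power $(A+B-I_m)^{2k-1}$), tracking the anti-commuting sign $-1$ at each use; a single small instance, e.g.\ two distinct rank-one projectors in $\mathbb{C}^2$, also suffices to fix the overall convention. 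Once the signs are verified in the two commutator cases, all ten identities \eqref{w48}--\eqref{w61} follow.
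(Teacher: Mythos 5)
Your proposal is correct and follows the same route as the paper: the paper's entire proof of Corollary \ref{TK39} is the remark preceding it, namely that for orthogonal projectors the matrices $A\pm B$ and $A+B-I_m$ are Hermitian, that $M^{D}=M^{\dag}$ for Hermitian $M$, and that therefore Corollaries \ref{TK35} and \ref{TK38} ``reduce to'' the stated identities. Your two refinements, however, are not pedantry; they repair genuine loose ends in that one-line argument. First, the paper's Hermitian principle does not cover the left-hand sides $AB-BA$ and $(AB)^k-(BA)^k$, which are skew-Hermitian; your extension of $M^{D}=M^{\dag}$ to skew-Hermitian (or, more generally, normal) matrices is exactly what is needed to legitimize replacing $D$ by $\dag$ on those two left-hand sides. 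Second, your insistence on re-deriving the signs from \eqref{w3}, \eqref{w10}, and \eqref{w30} rather than transcribing them from Corollary \ref{TK35} is essential, because the printed signs are mutually inconsistent: applying \eqref{w30} to $AB-BA=(A-B)(A+B-I_m)=-(A+B-I_m)(A-B)$ gives $(AB-BA)^{D}=(A+B-I_m)^{D}(A-B)^{D}=-(A-B)^{D}(A+B-I_m)^{D}$, which agrees with \eqref{w48} but has the opposite signs to \eqref{w24}; likewise \eqref{w51} at $k=1$ contradicts \eqref{w48}, so \eqref{w26} and \eqref{w51} as printed need their signs flipped. A rank-one example such as $A=\begin{bmatrix} 1 & 0 \\ 0 & 0\end{bmatrix}$, $B=\tfrac{1}{2}\begin{bmatrix} 1 & 1 \\ 1 & 1\end{bmatrix}$ confirms that the convention in \eqref{w48} is the correct one. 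In short, your proof is sound, proves the (sign-corrected) statement, and is more complete than the paper's own justification.
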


It was shown in \cite{CT-mp} that
\begin{align*}
(AB)^{\dag} & = BA - B[\,(\, I_m - B \,)(\, I_m - A \,)\,]^{\dag}A,
\\
(\, A - B \,)^{\dag} & = (\, A - AB \,)^{\dag} -
(\, B -  AB \,)^{\dag},
\\
(\, A - B \,)^{\dag} & = A - B + B(\, A - BA \,)^{\dag} -
(\, B -BA \,)^{\dag}A,
\\
( \, A + B - I_m \, )^{\dag} & = (AB)^{\dag} -
[\, (\, I_m - A \,)(\, I_m - B \, )\, ]^{\dag}
\end{align*}
hold for any pair of orthogonal projectors
$A$ and $B$. Substituting them into Corollary \ref{TK39} will yield many
new equalities for the Moore--Penrose inverses of orthogonal projectors.
More results and facts on the  Moore--Penrose inverses of two orthogonal
projectors and their algebraic operations can be found in \cite{Tian:2019b}.

In addition, it is also worth to discuss the following reverse order laws for generalized inverses
\begin{align*}
 (\alpha AB + \beta BA)^{(i,\ldots,j)} & = (\, A + B - I_m \,)^{(k,\ldots,l)}(\, \alpha A +  \beta B \,)^{-1}
\\
& = (\, \beta A +  \alpha B \,)^{-1}(\, A + B - I_m \,)^{(i,\ldots,j)},
\\
 (\alpha ABA + \beta BAB)^{(i,\ldots,j)} & = [(\, A + B - I_m \,)^{2}]^{(i,\ldots,j)}(\, \alpha A +  \beta B \,)^{(i,\ldots,j)}
\\
& =  (\, \beta A +  \alpha B \,)^{(i,\ldots,j)}[(\, A + B - I_m \,)^{2}]^{(i,\ldots,j)},
\\
 [\alpha (AB)^k + \beta (BA)^k]^{(i,\ldots,j)} & = [(\, A + B - I_m \,)^{2k-1}]^{(i,\ldots,j)}(\, \alpha A +  \beta B \,)^{(i,\ldots,j)}
\\
& = (\, \beta A +  \alpha B \,)^{(i,\ldots,j)}[(\, A + B - I_m \,)^{2k-1}]^{(i,\ldots,j)},
\\
[\alpha (ABA)^k + \beta (BAB)^k]^{(i,\ldots,j)} & = [(\, A + B - I_m \,)^{2k}]^{(i,\ldots,j)}(\, \alpha A +  \beta B \,)^{(i,\ldots,j)}
\\
& = (\, \beta A +  \alpha B \,)^{(i,\ldots,j)}[(\, A + B - I_m \,)^{2k}]^{(i,\ldots,j)}.
\end{align*}

We next explore ranges and null spaces of some matrix expressions composed by two idempotent matrices.
It is easy to verify that
\begin{align*}
& r[\, A + B, \, I_m + A - B\,] =  r[\, A + B, \, I_m - A + B\,] = r[\, A - B, \, I_m + A + B\,]  =  r[\, A - B, \, I_m - A - B\,] = m
\end{align*}
hold for two idempotent matrices $A$ and $B$ of the order $m$. Hence,
\begin{align*}
& r[\, A - B, \, I_m - A - B] =  r(A + B) +  r(I_m - A - B)
\\
& \Leftrightarrow r\!\begin{bmatrix} A \\ B \end{bmatrix} = r(A) + r(B) - r(AB) \ {\rm and} \
r[A, \, B] = r(A) + r(B) - r(BA)
\\
& \Leftrightarrow r\!\begin{bmatrix} A \\ B \end{bmatrix} = r(A) + r(B) - r(BA) \ {\rm and} \
r[A, \, B] = r(A) + r(B) - r(AB).
\end{align*}
In Problem 31-4, Issue 31(2003), the Bulletin of the International
Linear Algebra Society, the present author proposed such a problem: if $R$ is a
ring with unity 1, and $a, \ b \in R$ satisfy $a^2 = a$ and  $b^2 = b$, then
$$
(ab-ba)R = (a - b)R \cap  (1 - a - b)R \ \  {\rm and} \ \  R(ab-ba) =
  R(a - b) \cap R(1 - a - b).
$$
In matrix situation, the problem can be restated as follows: If $A$ and $B$
are a pair of idempotent matrices of order $m$, then
\begin{align}
{\mathscr R}(\,AB -  BA\,) = {\mathscr R}(\, A - B \,) \cap
  {\mathscr R}(\,I_m - A - B \,).
\label{w62}
\end{align}
It can be seen from this range equality that
\begin{enumerate}
\item[(i)] if $A - B$ is nonsingular, then ${\mathscr R}(\,AB -  BA\,) =
{\mathscr R}(\, I_m - A - B \,)$;

\item[(ii)]  if $A +  B - I_m$ is nonsingular, then
${\mathscr R}(\, AB -  BA\,) = {\mathscr R}(\, A - B \,)$;

\item[(iii)] the commutator $AB - BA$ is nonsingular if and only if both
$A - B$ and $I_m - A - B$ are nonsingular;

\item[(iv)] $AB = BA$  $\Leftrightarrow$ ${\mathscr R}(\, A - B \,) \cap
{\mathscr R}(\, I_m - A - B \,) = \{  0 \}$  $\Leftrightarrow$ $r\!\begin{bmatrix} A \\ B \end{bmatrix} = r(A) + r(B) - r(AB) \ {\rm and} \
r[A, \, B] = r(A) + r(B) - r(BA)$ $\Leftrightarrow$ $r\!\begin{bmatrix} A \\ B \end{bmatrix} = r(A) + r(B) - r(BA) \ {\rm and} \
r[A, \, B] = r(A) + r(B) - r(AB).$
  \end{enumerate}
The range equality in \eqref{w62} can be used to find some other
  analogous range equalities for matrix expressions consisting of
a pair of idempotent matrices of the same order.

\begin{theorem} \label{TK310}
Let $A$ and $B$ be two idempotent matrices of the order $m.$ Then$,$ the following range identities hold
\begin{align}
{\mathscr R}(\, AB + BA \,) & = {\mathscr R}(\, A + B \,) \cap
{\mathscr R}(\, A + B - I_m \,),
\label{w63}
\\
{\mathscr R}(\, ABA + BAB \,) & = {\mathscr R}(\, A + B \,) \cap
{\mathscr R}[\, (\, A + B - I_m \,)^2\,],
\label{w64}
\\
{\mathscr R}(\, ABA - BAB \,) & = {\mathscr R}(\, A - B \,) \cap
{\mathscr R}[\, (\, A + B - I_m \,)^2 \,],
\label{w65}
\\
{\mathscr R}[\, (\, AB - BA \,)^2 \,] & =
{\mathscr R}[\,(\, A - B \,)^2\,] \cap {\mathscr R}[\, (\, A + B -
  I_m \,)^2 \,].
\label{w66}
\end{align}
\end{theorem}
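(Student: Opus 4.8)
The plan is to recognize that each of \eqref{w63}--\eqref{w66} has the common shape ${\mathscr R}(PQ)={\mathscr R}(P)\cap{\mathscr R}(Q)$ for a suitable commuting pair $P,Q$, and to settle it by one easy inclusion together with a dimension count. Concretely I would take $P=A+B,\ Q=A+B-I_m$ for \eqref{w63}; $P=A+B,\ Q=(A+B-I_m)^2$ for \eqref{w64}; $P=A-B,\ Q=(A+B-I_m)^2$ for \eqref{w65}; and $P=(A-B)^2,\ Q=(A+B-I_m)^2$ for \eqref{w66}. In each case Corollary \ref{TK33} supplies a \emph{two-sided} factorization of the left-hand side: \eqref{w4}, \eqref{w15}, \eqref{w14}, and \eqref{w6} with $k=2$ give respectively $AB+BA=PQ=QP$, $ABA+BAB=PQ=QP$, $ABA-BAB=PQ=QP$, and $(AB-BA)^2=-PQ=-QP$. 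Since the left-hand side equals both $PQ$ and $QP$ up to sign, its column space lies in ${\mathscr R}(P)$ and in ${\mathscr R}(Q)$, giving the inclusion ${\mathscr R}(\mathrm{LHS})\subseteq{\mathscr R}(P)\cap{\mathscr R}(Q)$ for free.

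For the reverse inclusion I would invoke the standard identity $\dim[{\mathscr R}(P)\cap{\mathscr R}(Q)]=r(P)+r(Q)-r[\,P,\,Q\,]$ (from $r[\,P,\,Q\,]=\dim[{\mathscr R}(P)+{\mathscr R}(Q)]$), so that it suffices to prove $r(\mathrm{LHS})=r(P)+r(Q)-r[\,P,\,Q\,]$; an inclusion of subspaces of equal dimension is an equality. The first ingredient is the uniform fact $r[\,P,\,Q\,]=m$. Here the engine is \eqref{v31}, $(A-B)^2+(A+B-I_m)^2=I_m$, together with $A+B=(A+B-I_m)+I_m$: in every case the second block $Q$ differs from $\pm I_m$ by a matrix whose columns already lie in ${\mathscr R}(P)$, so column operations reduce $[\,P,\,Q\,]$ to $[\,P,\,\pm I_m\,]$, which has rank $m$.

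The crux, and the step I expect to be the main obstacle, is evaluating $r(\mathrm{LHS})$, since the rank list \eqref{v33}--\eqref{v37} covers $AB+BA$ but not $ABA\pm BAB$ or $(AB-BA)^2$. The device that removes this obstacle is again \eqref{v31}, which collapses each product into a polynomial in a single matrix so that the elementary formulas \eqref{hh22}--\eqref{hh24} apply. Writing $X=A+B$ gives $AB+BA=X^2-X$ and $ABA+BAB=X(I_m-X)^2$, so \eqref{hh22} and \eqref{hh24} give $r(AB+BA)=r(A+B)+r(A+B-I_m)-m$ and $r(ABA+BAB)=r(A+B)+r[(A+B-I_m)^2]-m$; writing $Y=A-B$ and using $(A+B-I_m)^2=I_m-Y^2$ gives $ABA-BAB=Y-Y^3$, so \eqref{hh23} gives $r(ABA-BAB)=r(A-B)+r[(A+B-I_m)^2]-m$; and writing $Z=(A-B)^2$ with $(A+B-I_m)^2=I_m-Z$ gives $(AB-BA)^2=Z^2-Z$, so \eqref{hh22} gives $r[(AB-BA)^2]=r[(A-B)^2]+r[(A+B-I_m)^2]-m$.

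In each case the resulting value of $r(\mathrm{LHS})$ equals $r(P)+r(Q)-m=r(P)+r(Q)-r[\,P,\,Q\,]=\dim[{\mathscr R}(P)\cap{\mathscr R}(Q)]$; combined with the inclusion above, this forces ${\mathscr R}(\mathrm{LHS})={\mathscr R}(P)\cap{\mathscr R}(Q)$ and disposes of all four identities at once. I would also note the harmless fact $r(I_m-A-B)=r(A+B-I_m)$ so that the value obtained for $r(AB+BA)$ matches \eqref{v35} verbatim.
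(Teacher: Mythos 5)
Your proof is correct and follows essentially the same route as the paper's: the easy inclusion from the two-sided factorizations in Corollary \ref{TK33}, then a dimension count via $\dim[{\mathscr R}(P)\cap{\mathscr R}(Q)]=r(P)+r(Q)-r[\,P,\,Q\,]$, using \eqref{v31} to collapse each left-hand side into a polynomial in a single matrix so that \eqref{hh22}--\eqref{hh24} yield $r(\mathrm{LHS})=r(P)+r(Q)-m$, and the column-operation reduction of $[\,P,\,Q\,]$ to $[\,P,\,\pm I_m\,]$ to get $r[\,P,\,Q\,]=m$. The only difference is organizational: you run all four identities through one uniform scheme, whereas the paper works \eqref{w63} and \eqref{w65} in detail and disposes of \eqref{w64} and \eqref{w66} by the same argument.
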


\begin{proof}
It can be derived from Lemma \ref{TK32} that
\begin{align}
{\mathscr R}(\, AB + BA \,) & \subseteq  {\mathscr R}(\, A + B \,) \cap
{\mathscr R}(\, A + B - I_m \,),
\label{w67}
\\
{\mathscr R}(\, ABA + BAB \,) & \subseteq  {\mathscr R}(\, A + B \,) \cap
{\mathscr R}[\, (\, A + B - I_m \,)^2 \,],
\label{w68}
\\
{\mathscr R}(\, ABA - BAB \,) & \subseteq {\mathscr R}(\, A - B \,) \cap
{\mathscr R}[\, (\, A + B - I_m \,)^2 \,],
\label{w69}
\\
{\mathscr R}[\, (\, AB - BA \,)^2 \,] & \subseteq
{\mathscr R}[\,(\, A - B \,)^2\,] \cap {\mathscr R}[\, (\, A + B -
  I_m \,)^2 \,].
\label{w70}
\end{align}
If we can show that the dimensions of the subspaces on both sides of
\eqref{w67}--\eqref{w70} are equal, respectively, then
\eqref{w67}--\eqref{w70} are valid.

It follows from \eqref{hh21} and \eqref{w3} that
\begin{align}
  r(\, AB + BA \,) = r(\,  A + B \,)
+ r(\, A + B - I_m \,) - m.
\label{w74}
\end{align}
On the other hand, using the well-known formula
\begin{align}
\dim [\, {\mathscr R}(M) \cap {\mathscr R}(N) \,] =
r(M) +  r(N) - r[\, M, \,  N \,],
\label{w75}
\end{align}
we also obtain
\begin{align}
&  \dim \left[\, {\mathscr R}(\, A + B \,) \cap
{\mathscr R} (\, A + B - I_m \,) \, \right] \nb
\\
& = r(\, A + B \,) +
  r(\, A + B - I_m \,) -  r[\,A + B, \,  A + B - I_m] \nb
\\
 & = r(\, A + B \,) + r(\, A + B - I_m \,) -  r[\, A + B, \, I_m ] \nb
\\
 & = r(\, A + B \,) + r(\, A + B - I_m \,) - m.
 \label{w76}
\end{align}
Eqs.\,\eqref{w74} and \eqref{w76} imply that the dimensions of the subspaces on
the both sides of \eqref{w67} are equal. Thus, \eqref{w63} holds. Note
from \eqref{w3} that
$$
ABA - BAB = (\,  A - B \,)(\, A + B - I_m \,)^2 =
(\,  A - B \,) - (\, A - B \,)^3.
$$
Hence, it follows from \eqref{hh22} that
\begin{align*}
  r(\, ABA - BAB \,) & = r(\, A - B \,) +
r[\, I_m - (\, A - B \,)^2 \,] - m
\\
& = r(\, A - B \,) +  r[\,(\, A + B  - I_m \,)^2 \,] - m
\\
& = r(\, A - B \,) +  r(I_m + A - B) +  r(I_m - A + B) - 2m  \ \ \ \mbox{(by \eqref{v34})},
\end{align*}
and by \eqref{w75},
\begin{align*}
&  \dim \{ \, {\mathscr R}(\, A - B \,) \cap
{\mathscr R}[\, (\, A + B - I_m \,)^2 \,] \,  \}
\\
&  = r(\, A - B \,) + r[\, (\, A + B - I_m \,)^2 \,]
  - r[\, A - B, \  (\, A + B - I_m \,)^2 \, ]
\\
&  =  r(\, A - B \,) + r[\, (\, A + B - I_m \,)^2 \,] -
r[\, A - B, \  I_m - (\, A - B \,)^2 \, ] \ \ \ \mbox{(by (\ref{v31}))}
\\
&  = r(\, A - B \,) + r[\, (\, A + B - I_m \,)^2 \,]
  - r[\, A - B, \  I_m  \, ]
\\
& =  r(\, A - B \,) + r[\, (\, A + B - I_m \,)^2 \,] - m
\\
& = r(\, A - B \,) +  r(I_m + A - B) +  r(I_m - A + B) - 2m \ \ \ \mbox{(by \eqref{v34})}.
\end{align*}
The above two equalities imply that the dimensions of the subspaces on
the both sides of \eqref{w69} are equal. Thus, \eqref{w67} holds.
Similarly, we can show that
\begin{align*}
& \dim [\, {\mathscr R}(\, ABA + BAB \,) \,]  =  r(\, A + B \,) +
  r[\,(\, A + B  - I_m \,)^2 \,] - m,
\\
& \dim\{ [\, {\mathscr R}(\, A + B \,) \cap {\mathscr R}[\,
(\, A + B - I_m \,)^2 \,] \}  =
  r(\, A + B \,) + r[\, (\, A + B - I_m \,)^2 \,] - m.
\end{align*}
Thus, \eqref{w64} holds. It can also be shown that
\begin{align*}
& \dim{\mathscr R}[\, (\, AB - BA \,)^2 \,]  =
r[\,(\, A - B \,)^2\,] + r[\, (\, A + B - I_m \,)^2 \,] - m,
\\
& \dim{\mathscr R}\{ \,[\,(\, A - B \,)^2\,] \cap {\mathscr R}[\, (\, A + B - I_m \,)^2 \,] \, \}
 = r[\,(\, A - B \,)^2\,] + r[\, (\, A + B - I_m \,)^2 \,]
- m.
\end{align*}
Hence, \eqref{w66} holds.
\end{proof}

We leave the verification of the following result to the reader.

\begin{theorem} \label{TK311}
Let $A$ and $B$ be two idempotent matrices of the order $m.$
Then
\begin{align*}
 {\mathscr N}(\, AB \pm BA \,) & = {\mathscr N}(\, A \pm B \,) \cap
{\mathscr N}(\, A + B - I_m \,),
\\
{\mathscr N}(\, ABA \pm BAB \,) & = {\mathscr N}(\, A \pm B \,)
\cap {\mathscr N}[\, (\, A + B - I_m \,)^2\,],
\\
{\mathscr N}[\, (\, AB - BA \,)^2 \,] & = {\mathscr N}[\,(\, A - B
\,)^2\,] \cap {\mathscr N}[\, (\, A + B - I_m \,)^2 \,].
\end{align*}
\end{theorem}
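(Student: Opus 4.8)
Theorem \ref{TK311} is the null-space analogue of the range identities in Theorem \ref{TK310}, and the plan is to dualize that proof via the conjugate-transpose operation. The key observation is that for any matrix $M$, the null space ${\mathscr N}(M)$ is related to the range of $M^{\ast}$ by ${\mathscr N}(M) = {\mathscr R}(M^{\ast})^{\perp}$; equivalently, $x \in {\mathscr N}(M)$ precisely when $x$ is orthogonal to every column of $M^{\ast}$. Since intersection and orthogonal complement interact via $({\mathscr U} + {\mathscr V})^{\perp} = {\mathscr U}^{\perp} \cap {\mathscr V}^{\perp}$, I expect the null-space statements to follow by applying the already-proved range identities to the conjugate transposes of the matrices involved, together with the fact that $A^{\ast}$ and $B^{\ast}$ are again idempotent whenever $A$ and $B$ are.

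The concrete plan is as follows. First I would record that if $A^2 = A$ and $B^2 = B$, then $(A^{\ast})^2 = A^{\ast}$ and $(B^{\ast})^2 = B^{\ast}$, so $A^{\ast}$ and $B^{\ast}$ form a pair of idempotent matrices of order $m$ to which all earlier identities apply. Next I would use the factorization identities from Corollary \ref{TK33}, taking conjugate transposes: for instance, from \eqref{w4} one gets $(AB+BA)^{\ast} = A^{\ast}B^{\ast} + B^{\ast}A^{\ast}$ upon noting $(AB)^{\ast} = B^{\ast}A^{\ast}$, and similar bookkeeping handles $AB-BA$, $ABA \pm BAB$, and $(AB-BA)^2$. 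The main inclusions ${\mathscr N}(A \pm B) \cap {\mathscr N}(A+B-I_m) \subseteq {\mathscr N}(AB \pm BA)$, etc., follow immediately from the factored forms in Corollary \ref{TK33}: if a vector is killed by both factors on the right-hand side of each identity, it is killed by the product.

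To upgrade these inclusions to equalities I would again argue by dimension, exactly mirroring the range proof. Using the rank–nullity relation $\dim {\mathscr N}(M) = m - r(M)$ and the dual intersection formula
\begin{align*}
\dim[\,{\mathscr N}(M) \cap {\mathscr N}(N)\,] = m - r\!\begin{bmatrix} M \\ N \end{bmatrix},
\end{align*}
I would compute the dimension of each left-hand null space from the rank formulas already established (such as \eqref{w74} for $AB+BA$) and compare it with the dimension of the right-hand intersection, where the relevant stacked-rank quantities $r\!\begin{bmatrix} A \pm B \\ A+B-I_m \end{bmatrix}$ simplify using the identity $(A-B)^2 + (A+B-I_m)^2 = I_m$ from \eqref{v31} in the same way that $r[\,A+B,\,A+B-I_m\,] = r[\,A+B,\,I_m\,] = m$ was used in \eqref{w76}. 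The step I expect to be the genuine obstacle is verifying these column-augmented (now row-augmented) rank reductions cleanly: one must check that, for example, $r\!\begin{bmatrix} A-B \\ (A+B-I_m)^2 \end{bmatrix} = m$ by exploiting \eqref{v31} to replace $(A+B-I_m)^2$ with $I_m - (A-B)^2$ and then absorbing the $(A-B)^2$ block. Once each such stacked matrix is shown to have full row rank $m$ (or the appropriate value), the dimensions match and the reverse inclusions, hence the equalities, follow.
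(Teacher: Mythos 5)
There is nothing in the paper to compare your argument against: Theorem \ref{TK311} is explicitly ``left to the reader.'' Judged on its own terms, your proposal breaks down at exactly the step you flag as the genuine obstacle, and the break cannot be repaired, because the statement as printed is false. Every right-hand side in the theorem is the zero subspace: by \eqref{v31}, any $x$ with $(A-B)^2x = (A+B-I_m)^2x = 0$ satisfies $x = (A-B)^2x + (A+B-I_m)^2x = 0$, and each of the five intersections is contained in ${\mathscr N}[(A-B)^2] \cap {\mathscr N}[(A+B-I_m)^2]$, since ${\mathscr N}(A-B) \subseteq {\mathscr N}[(A-B)^2]$, ${\mathscr N}(A+B-I_m) \subseteq {\mathscr N}[(A+B-I_m)^2]$, and ${\mathscr N}(A+B) \subseteq {\mathscr N}[(A-B)^2]$ because $(A-B)^2 = 2(A+B) - (A+B)^2$. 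Your own computation says the same thing: the stacked matrices such as $\begin{bmatrix} A \pm B \\ A+B-I_m \end{bmatrix}$ \emph{always} have rank $m$, so by your formula $\dim[{\mathscr N}(M)\cap{\mathscr N}(N)] = m - r\!\begin{bmatrix} M \\ N \end{bmatrix}$ each right-hand side has dimension $0$. The left-hand sides, however, have dimension $m - r(AB \pm BA)$, etc., which is positive whenever those products are singular: $A = B = 0$ gives ${\mathscr N}(AB+BA) = {\mathbb C}^m$, and any commuting pair gives ${\mathscr N}(AB-BA) = {\mathbb C}^m$, against $\{0\}$ on the right. So the dimensions you assert will ``match'' do not; the only inclusion that holds is the vacuous one ${\mathscr N}(A\pm B)\cap{\mathscr N}(A+B-I_m) = \{0\} \subseteq {\mathscr N}(AB \pm BA)$.

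The root cause is a duality slip in your first paragraph. Starting from ${\mathscr N}(M) = {\mathscr R}(M^{\ast})^{\perp}$, orthogonal complementation converts an intersection of ranges into a \emph{sum} of null spaces, $({\mathscr U}\cap{\mathscr V})^{\perp} = {\mathscr U}^{\perp} + {\mathscr V}^{\perp}$, not into an intersection. Applying \eqref{w62} and Theorem \ref{TK310} to the idempotent pair $A^{\ast}, B^{\ast}$ and taking complements yields the true null-space analogue: ${\mathscr N}(AB \pm BA) = {\mathscr N}(A \pm B) + {\mathscr N}(A+B-I_m)$, ${\mathscr N}(ABA \pm BAB) = {\mathscr N}(A \pm B) + {\mathscr N}[(A+B-I_m)^2]$, and ${\mathscr N}[(AB-BA)^2] = {\mathscr N}[(A-B)^2] + {\mathscr N}[(A+B-I_m)^2]$. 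That corrected version is exactly what your machinery proves: the two-sided factorizations in Corollary \ref{TK33} (each factor appears as the rightmost factor in one of the two orderings) give ${\mathscr N}(A \pm B) + {\mathscr N}(A+B-I_m) \subseteq {\mathscr N}(AB \pm BA)$; the two summands meet only in $\{0\}$ by the computation above, so the sum has dimension $[m - r(A\pm B)] + [m - r(A+B-I_m)]$; and this equals $m - r(AB\pm BA) = \dim {\mathscr N}(AB \pm BA)$ by \eqref{w74} and the analogous rank identities derived in the proof of Theorem \ref{TK310}. Carried out honestly, then, your plan refutes Theorem \ref{TK311} as stated and establishes the version with $+$ in place of $\cap$ --- presumably what was intended, since it is the precise dual of Theorem \ref{TK310}.
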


Obviously, the formulas and facts in Theorem \ref{TH313} can be extended to a family of idempotent matrices and their algebraic operations. It is expected that many more nontrivial algebraic equalities among idempotent matrices and their consequences can be established.

One of the main concerns about a given matrix is to explore the relationships between the matrix and its transpose or conjugate transpose; see e.g., \cite{Duk,Gor,Gow,Rad,TZa,Ver}. It is easy to see that $(A^{*})^2 = A^{*}$  by taking conjugate of both sides of $A^2 = A$.
In this case, replacing $B$ with $A^{*}$ in the preceding results, we obtain the following consequences.

\begin{theorem} \label{TL31}
Let $A$ be an idempotent matrix of the order $m.$  Then the following two matrix identities
\begin{align*}
&  (A - A^{*})^2 +  (A + A^{*} - I_m)^2  = I_m,
\\
& AA^{*} + A^{*}A + 4^{-1}I_m  = (\, A + A^{*} - 2^{-1}I_m  \,)^2,
\end{align*}
and the following five rank formulas
\begin{align*}
&  r(A - A^{*}) =  r(A + A^{*}) +  r(2I_m - A - A^{*}) - m,
\\
&  r(I_m - A - A^{*} ) =  2r(I_m + A - A^{*}) - m,
\\
& r(AA^{*} + A^{*}A) = r[A, \, A^{*}]  = r(I_m - A - A^{*}) + r(A + A^{*}) - m,
\\
& r(I_m - AA^{*} - A^{*}A) = r[(\sqrt{5} -1)/2I_m + A + A^{*}] + r[(\sqrt{5} + 1)/2I_m - A - A^{*}] - m,
\\
& r(2I_m - AA^{*} - A^{*}A) = r(I_m + A + A^{*} ) + r(2I_m - A - A^{*}) - m
\end{align*}
hold$.$ In particular$,$
\begin{align*}
&  A = A^{*} \Leftrightarrow  (I_m - A - A^{*} )^2  = I_m  \Leftrightarrow r(A + A^{*}) +  r(2I_m - A - A^{*}) = m,
\\
&  (A - A^{*})^2  = 2^{-1} I_m \Leftrightarrow  (I_m - A - A^{*} )^2  =  2^{-1}I_m,
\\
&  (A - A^{*})^2  = I_m \Leftrightarrow  (I_m - A - A^{*} )^2  = 0 \Leftrightarrow r(I_m + A - A^{*}) +  r(I_m - A + A^{*}) - m,
\\
& AA^{*} + A^{*}A =  - 2I_m    \Leftrightarrow  (\, A + A^{*} - 2^{-1}I_m  \,)^2 = - \frac{7}{4}I_m,
\\
& AA^{*} + A^{*}A =  -I_m    \Leftrightarrow  (\, A + A^{*} - 2^{-1}I_m  \,)^2 = - \frac{3}{4}I_m,
\\
& AA^{*} + A^{*}A =  -4^{-1}I_m  \Leftrightarrow (\, A + A^{*} - 2^{-1}I_m  \,)^2 =0,
\\
& AA^{*} + A^{*}A = 0  \Leftrightarrow  (\, A + A^{*} - 2^{-1}I_m  \,)^2 = 4^{-1}I_m \Leftrightarrow  r(I_m - A - A^{*} ) + r(A + A^{*}) = m,
\\
& AA^{*} + A^{*}A = \frac{3}{4}I_m   \Leftrightarrow  (\, A + A^{*} - 2^{-1}I_m  \,)^2 = I_m,
\\
& AA^{*} + A^{*}A = I_m   \Leftrightarrow  (\, A + A^{*} - 2^{-1}I_m  \,)^2 = \frac{5}{4}I_m   \Leftrightarrow r[(\sqrt{5} -1)/2I_m + A + A^{*}] + r[(\sqrt{5} + 1)/2I_m - A - A^{*}] = m,
\\
& AA^{*} + A^{*}A = 2I_m   \Leftrightarrow (\, A + A^{*} - 2^{-1}I_m  \,)^2 = \frac{9}{4}I_m  \Leftrightarrow  r(I_m + A + A^{*} ) + r(2I_m - A - A^{*}) = m,
\end{align*}
and
\begin{align*}
&  r(A - A^{*}) = m  \Leftrightarrow r(A + A^{*}) =  r(2I_m - A - A^{*}) = m,
\\
& r(I_m - A - A^{*}) = m  \Leftrightarrow  r(I_m + A - A^{*}) = m,
\\
& r(AA^{*} + A^{*}A) = r[A, \, A^{*}]  = m \Leftrightarrow r(A + A^{*}) = r(I_m - A - A^{*})  = m,
\\
& r(I_m - AA^{*} - A^{*}A) = m \Leftrightarrow r[(\sqrt{5} -1)/2I_m + A + A^{*}] =
r[(\sqrt{5} + 1)/2I_m - A - A^{*}] = m,
\\
& r(2I_m - AA^{*} - A^{*}A) = m \Leftrightarrow r(I_m + A + A^{*} ) = r(2I_m - A - A^{*}) = m.
\end{align*}
\end{theorem}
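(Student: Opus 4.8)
The plan is to derive every assertion by specializing Theorem~\ref{TK31}, together with the identities \eqref{v31}--\eqref{v32}, to the case $B = A^{\ast}$, and then to simplify the resulting expressions using the extra symmetry that $A$ and $A^{\ast}$ enjoy. The first step is the elementary but essential observation that $A^{\ast}$ is itself idempotent: taking the conjugate transpose of $A^2 = A$ gives $(A^{\ast})^2 = A^{\ast}$. Hence $A$ and $A^{\ast}$ form a pair of idempotent matrices of order $m$, and every conclusion of Theorem~\ref{TK31} applies verbatim with $B$ replaced by $A^{\ast}$. In particular the two identities and the rank formulas corresponding to \eqref{v33}--\eqref{v37} are immediate, except that the analogues of \eqref{v33}--\eqref{v35} still carry squared or two-sided expressions on the left that must be reduced, and one new equality has to be produced.

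The reductions all stem from the (skew-)Hermitian structure of the relevant matrices. First, $A - A^{\ast}$ is skew-Hermitian, since $(A - A^{\ast})^{\ast} = -(A - A^{\ast})$; thus $(A - A^{\ast})^2 = -(A - A^{\ast})^{\ast}(A - A^{\ast})$, whence $r[(A - A^{\ast})^2] = r(A - A^{\ast})$, which turns the $B = A^{\ast}$ instance of \eqref{v33} into the stated formula for $r(A - A^{\ast})$. The same computation shows that $(A - A^{\ast})^2 = 0$ forces $(A - A^{\ast})^{\ast}(A - A^{\ast}) = 0$, i.e. $A = A^{\ast}$, which is exactly what converts the case $(A-B)^2 = 0$ of \eqref{v38} into the first listed equivalence. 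Second, $I_m - A - A^{\ast}$ is Hermitian, so $r[(I_m - A - A^{\ast})^2] = r(I_m - A - A^{\ast})$; combining this with the observation that $I_m + A - A^{\ast}$ and $I_m - A + A^{\ast}$ are conjugate transposes of each other, and hence have equal ranks, collapses the $B = A^{\ast}$ instance of \eqref{v34} into $r(I_m - A - A^{\ast}) = 2r(I_m + A - A^{\ast}) - m$.

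The one genuinely new equality, $r(AA^{\ast} + A^{\ast}A) = r[\,A, \, A^{\ast}\,]$, I would obtain from the factorization
\begin{align*}
AA^{\ast} + A^{\ast}A = [\,A, \, A^{\ast}\,]\begin{bmatrix} A^{\ast} \\ A \end{bmatrix} = [\,A, \, A^{\ast}\,][\,A, \, A^{\ast}\,]^{\ast}.
\end{align*}
Since $r(MM^{\ast}) = r(M)$ for any $M$, this yields $r(AA^{\ast} + A^{\ast}A) = r[\,A, \, A^{\ast}\,]$, which supplies the middle member of the third rank formula, the outer equality of which is just the $B = A^{\ast}$ case of \eqref{v35}. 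With these three observations in hand, the remaining formulas corresponding to \eqref{v36}--\eqref{v37} require no reduction at all, and all of the particular equivalences follow by substituting $B = A^{\ast}$ into \eqref{v38}--\eqref{vv322} and applying the same (skew-)Hermitian simplifications. The point to watch—more a matter of bookkeeping than a true obstacle—is to track which squared expressions legitimately collapse: the collapse is valid for $A - A^{\ast}$ and $I_m - A - A^{\ast}$ precisely because they are skew-Hermitian and Hermitian respectively, whereas it must \emph{not} be applied to the non-symmetric combinations governing \eqref{v36}--\eqref{v37}, which therefore remain unchanged under the substitution.
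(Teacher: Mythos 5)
Your proposal is correct and follows exactly the paper's route: the paper obtains Theorem \ref{TL31} simply by observing that $(A^{*})^2 = A^{*}$ and "replacing $B$ with $A^{*}$ in the preceding results," i.e., by specializing Theorem \ref{TK31} and its consequences. In fact your write-up is more complete than the paper's one-line justification, since you explicitly supply the reductions the paper leaves tacit --- $r[(A - A^{*})^2] = r(A - A^{*})$ via skew-Hermitian-ness and $r(M^{*}M) = r(M)$, the collapse $r[(I_m - A - A^{*})^2] = r(I_m - A - A^{*})$ together with $r(I_m + A - A^{*}) = r(I_m - A + A^{*})$ via (conjugate-)Hermitian symmetry, and the new middle equality $r(AA^{*} + A^{*}A) = r[A, \, A^{*}]$ via the Gram factorization $AA^{*} + A^{*}A = [A, \, A^{*}][A, \, A^{*}]^{*}$ --- all of which are exactly what is needed to turn the verbatim substitution into the formulas as stated.
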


\begin{theorem}\label{TL32}
Let $A$ be an idempotent matrix of the order $m,$ and let $k$
  be any positive integer$.$ Then the following factorization equalities
\begin{align*}
\alpha AA^{*} + \beta A^{*}A & = (\, \alpha A +  \beta A^{*} \,)(\, A + A^{*} - I_m \,)
= (\, A + A^{*} - I_m \,) (\, \beta A +  \alpha A^{*} \,),
\\
\alpha AA^{*}A + \beta A^{*}AA^{*} & = (\, \alpha A +  \beta A^{*} \,)(\, A + A^{*} - I_m \,)^2
= (\, A + A^{*} - I_m \,)^2 (\, \beta A +  \alpha A^{*} \,),
\\
\alpha (AA^{*})^k + \beta (A^{*}A)^k & = (\, \alpha A +  \beta A^{*} \,)(\, A + A^{*} - I_m \,)^{2k-1}
= (\, A + A^{*} - I_m \,)^{2k-1}(\, \beta A +  \alpha A^{*} \,),
\\
\alpha (AA^{*}A)^k + \beta (A^{*}AA^{*})^k & = (\, \alpha A +  \beta A^{*} \,)(\, A + A^{*} - I_m \,)^{2k}
= (\, A + A^{*} - I_m \,)^{2k}(\, \beta A +  \alpha A^{*} \,)
\end{align*}
hold for any two scalars $\alpha$ and $\beta.$  In particular$,$ $\alpha AA^{*} + \beta A^{*}A$ is nonsingular $\Leftrightarrow$  $\alpha (AA^{*})^k + \beta (A^{*}A)^k$ is nonsingular $\alpha AA^{*}A + \beta A^{*}AA^{*}$ is nonsingular $\Leftrightarrow$ $\alpha (AA^{*})^k + \beta (A^{*}A)^k$ is nonsingular
$\Leftrightarrow$ $\alpha (AA^{*}A)^k + \beta (A^{*}AA^{*})^k$ is nonsingular $\Leftrightarrow$ both
$\alpha A + \beta A^{*}$ and $A + A^{*} - I_m$ are  nonsingular; in which cases$,$  the following equalities hold
\begin{align*}
(\alpha AA^{*} + \beta A^{*}A)^{-1} & = (\, A + A^{*} - I_m \,)^{-1}(\, \alpha A +  \beta A^{*} \,)^{-1}
= (\, \beta A +  \alpha A^{*} \,)^{-1}(\, A + A^{*} - I_m \,)^{-1},
\\
(\alpha AA^{*}A + \beta A^{*}AA^{*})^{-1} & = (\, A + A^{*} - I_m \,)^{-2}(\, \alpha A +  \beta A^{*} \,)^{-1}
=  (\, \beta A +  \alpha A^{*} \,)^{-1}(\, A + A^{*} - I_m \,)^{-2},
\\
[\alpha (AA^{*})^k + \beta (A^{*}A)^k]^{-1} & = (\, A + A^{*} - I_m \,)^{-2k+1}(\, \alpha A +  \beta A^{*} \,)^{-1}
= (\, \beta A +  \alpha A^{*} \,)^{-1}(\, A + A^{*} - I_m \,)^{-2k+1},
\\
[\alpha (AA^{*}A)^k + \beta (A^{*}AA^{*})^k]^{-1} & = (\, A + A^{*} - I_m \,)^{-2k}(\, \alpha A +  \beta A^{*} \,)^{-1}
= (\, \beta A +  \alpha A^{*} \,)^{-1}(\, A + A^{*} - I_m \,)^{-2k}.
\end{align*}
\end{theorem}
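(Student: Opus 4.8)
The plan is to obtain this theorem as an immediate specialization of Theorem \ref{TK32}, which already established the four factorization identities, the nonsingularity equivalences, and the inverse formulas for an arbitrary ordered pair of idempotent matrices $A$ and $B$ of order $m$. The single observation that unlocks everything is that $A^{*}$ is itself idempotent: taking the conjugate transpose of $A^2 = A$ gives $(A^{*})^2 = (A^2)^{*} = A^{*}$. Consequently the pair $(A, A^{*})$ is a pair of idempotent matrices of order $m$, so every hypothesis of Theorem \ref{TK32} is satisfied upon setting $B = A^{*}$.

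First I would substitute $B = A^{*}$ into the factorization identities \eqref{ff31}--\eqref{ff34}. Under this substitution the products transform as $AB \mapsto AA^{*}$, $BA \mapsto A^{*}A$, $ABA \mapsto AA^{*}A$, $BAB \mapsto A^{*}AA^{*}$, $(AB)^k \mapsto (AA^{*})^k$, and $(BA)^k \mapsto (A^{*}A)^k$, while the scalar combinations and the middle factor become $\alpha A + \beta B \mapsto \alpha A + \beta A^{*}$, $\beta A + \alpha B \mapsto \beta A + \alpha A^{*}$, and $A + B - I_m \mapsto A + A^{*} - I_m$. Making these replacements term by term in \eqref{ff31}--\eqref{ff34} produces exactly the four displayed factorization equalities of the present theorem, with no additional computation required, since the identities of Theorem \ref{TK32} are purely formal consequences of the idempotency of the two matrices and nothing else. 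The nonsingularity equivalences then follow from the elementary fact that a product of square matrices is invertible if and only if each factor is: every left-hand expression is nonsingular precisely when both $\alpha A + \beta A^{*}$ and $A + A^{*} - I_m$ are nonsingular, and in that regime all four left-hand expressions share this single criterion. The inverse formulas are obtained by specializing \eqref{dd37}--\eqref{dd310} with $B = A^{*}$, that is, by inverting each factorization and reversing the order of its two (now invertible) factors.

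There is essentially no genuine obstacle here; the entire content is the recognition that idempotency is the only property of $B$ invoked in Theorem \ref{TK32}, so the Hermitian relation $B = A^{*}$ contributes nothing beyond guaranteeing that $A^{*}$ is idempotent (although it incidentally renders $A + A^{*} - I_m$ Hermitian, this is not used). The only point deserving a moment's care is the bookkeeping of which linear factor, $\alpha A + \beta A^{*}$ or $\beta A + \alpha A^{*}$, appears on the left versus the right of each product; this ordering is inherited verbatim from \eqref{ff31}--\eqref{ff34} and from \eqref{dd37}--\eqref{dd310}, so one need only transcribe it faithfully.
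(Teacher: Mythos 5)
Your proposal is correct and is exactly the paper's own argument: the paper notes that $(A^{*})^2 = A^{*}$ follows by taking conjugate transposes of $A^2 = A$, and then obtains Theorem \ref{TL32} by replacing $B$ with $A^{*}$ in Theorem \ref{TK32}, precisely as you do. Your additional remarks on the invertibility of products and the factor-ordering bookkeeping are consistent with the specialization of \eqref{ff31}--\eqref{ff34} and \eqref{dd37}--\eqref{dd310}.
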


\begin{corollary}\label{TL33}
Let $A$  be an idempotent matrix of the order $m,$ and let $k$
  be a positive integer$.$ Then$,$ the following matrix identities hold
\begin{align*}
AA^{*} - A^{*}A & =  (\, A - A^{*} \,)(\, A + A^{*} - I_m \,) = - (\, A + A^{*} - I_m
\,)(\, A - A^{*} \,),
\\
AA^{*} + A^{*}A & = (\, A + A^{*} \,)(\, A + A^{*} - I_m  \,) =  (\, A + A^{*} -
I_m\,)(\, A + A^{*}\,),
\\
AA^{*}A - A^{*}AA^{*} & = (\, A - A^{*} \,)(\, A + A^{*} - I_m \,)^2 = (\, A + A^{*} - I_m \,)^2(\, A - A^{*} \,),
\\
AA^{*}A + A^{*}AA^{*} & = (\, A + A^{*} \,)(\, A + A^{*} - I_m \,)^2 =(\, A + A^{*} - I_m \,)^2(\, A + A^{*} \,).
\\
(\,AA^{*} - A^{*}A\,)^k  & = (-1)^{k(k-1)/2}(\, A - A^{*} \,)^k(\,  A + A^{*} - I_m \,)^k  = (-1)^{k(k-1)/2}(\, I_m - A - A^{*} \,)^k(\, A  - A^{*} \,)^k,
\\
(\,AA^{*} + A^{*}A \,)^k & = (\, A + A^{*}  \,)^k(\,  A + A^{*} - I_m \,)^k = (\,  A + A^{*} - I_m \,)^k(\, A + A^{*} \,)^k, \ \ \ \ \ \
\\
(\, AA^{*}A - A^{*}AA^{*}\,)^k & = (\,  A - A^{*} \,)^k(\, A + A^{*} - I_m \,)^{2k} = (\,  A + A^{*} - I_m\,)^{2k}(\,  A - A^{*}\,)^k,
\\
(\,AA^{*}A + A^{*}AA^{*}\,)^k & = (\, A + A^{*}  \,)^k(\,  A + A^{*} - I_m  \,)^{2k} = (\,  A + A^{*} - I_m \,)^{2k}(\,  A  +  A^{*}\,)^k,
\\
(AA^{*})^k - (A^{*}A)^k & = (\, A - A^{*} \,)(\,  A + A^{*} - I_m  \,)^{2k-1} = - (\, A + A^{*} - I_m \,)^{2k-1}(\, A  - A^{*}\,),
\\
(AA^{*})^k + (A^{*}A)^k & = (\, A + A^{*} \,)(\,  A + A^{*} - I_m  \,)^{2k-1} = (\, A + A^{*} - I_m \,)^{2k-1}(\, A + A^{*} \,),
\\
  (AA^{*}A)^k - (A^{*}AA^{*})^k & =  (\, A - A^{*} \,)(\,  A + A^{*} - I_m  \,)^{2k} = (\, A + A^{*} - I_m\,)^{2k}(\, A - A^{*}\,),
\\
(AA^{*}A)^k + (A^{*}AA^{*})^k & = (\, A + A^{*} \,)(\, A + A^{*} - I_m  \,)^{2k}  =(\,  A + A^{*} - I_m \,)^{2k}(\, A + A^{*}\,).
\end{align*}
In addition$,$ the following matrix identities hold
\begin{align*}
& AA^{*} - A^{*}A  + (AA^{*})^2 - (A^{*}A)^2 + \cdots + (AA^{*})^k - (A^{*}A)^k 
\\
& = (A - A^{*})[(\, A + A^{*} - I_m \,) + (\,  A + A^{*} - I_m  \,)^3  + \cdots +  (\,  A + A^{*} - I_m  \,)^{2k-1}] 
\\
& = [(\, A + A^{*} - I_m \,) + (\,  A + A^{*} - I_m  \,)^3  + \cdots +  (\,  A + A^{*} - I_m  \,)^{2k-1}](A^{*} - A),
\\
& AA^{*} + A^{*}A  + (AA^{*})^2 + (A^{*}A)^2 + \cdots + (AA^{*})^k + (A^{*}A)^k  
\\
& = (A + A^{*})[(\, A + A^{*} - I_m \,) + (\,  A + A^{*} - I_m  \,)^3  + \cdots +  (\,  A + A^{*} - I_m  \,)^{2k-1}]
\\
& = [(\, A + A^{*} - I_m \,) + (\,  A + A^{*} - I_m  \,)^3  + \cdots +  (\,  A + A^{*} - I_m  \,)^{2k-1}](A+A^{*}),
\\
& AA^{*}A - A^{*}AA^{*}  + (AA^{*}A)^2 - (A^{*}AA^{*})^2 + \cdots + (AA^{*}A)^k - (A^{*}AA^{*})^k  \nb
\\
& = (A - A^{*})[(\, A + A^{*} - I_m \,)^2 + (\,  A + A^{*} - I_m  \,)^4  + \cdots +  (\,  A + A^{*} - I_m  \,)^{2k}] \nb
\\
& = [(\, A + A^{*} - I_m \,)^2 + (\,  A + A^{*} - I_m  \,)^4  + \cdots +  (\,  A + A^{*} - I_m  \,)^{2k}](A - A^{*}),
\\
& AA^{*}A + A^{*}AA^{*}  + (AA^{*}A)^2 + (A^{*}AA^{*})^2 + \cdots + (AA^{*}A)^k + (A^{*}AA^{*})^k  \nb
\\
& = (A + A^{*})[(\, A + A^{*} - I_m \,)^2 + (\,  A + A^{*} - I_m  \,)^4  + \cdots +  (\,  A + A^{*} - I_m  \,)^{2k}] \nb
\\
& = [(\, A + A^{*} - I_m \,)^2 + (\,  A + A^{*} - I_m  \,)^4  + \cdots +  (\,  A + A^{*} - I_m  \,)^{2k}](A + A^{*}).
\end{align*}
\end{corollary}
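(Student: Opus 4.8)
The plan is to deduce this corollary directly from Corollary~\ref{TK33} via the single substitution $B = A^{\ast}$, so essentially no new computation is required. The one point that genuinely needs verification is that $A^{\ast}$ is again an idempotent matrix of the same order $m$. This is immediate: taking the conjugate transpose of the defining relation $A^2 = A$ gives $(A^{\ast})^2 = (A^2)^{\ast} = A^{\ast}$, as already noted in the text preceding Theorem~\ref{TL31}. Hence the ordered pair $(A,\, A^{\ast})$ satisfies the hypotheses of Corollary~\ref{TK33}.

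Next I would observe that every identity listed in Corollary~\ref{TK33} is a purely formal algebraic consequence of the two idempotency relations $A^2 = A$ and $B^2 = B$ (through the factorizations of Theorem~\ref{TK32}), and uses no further structural property of $B$ beyond $B^2 = B$. Consequently each identity is an equality between two polynomial expressions in $A$ and $B$ that holds whenever both are idempotent. Therefore, specializing $B \mapsto A^{\ast}$ in each line of Corollary~\ref{TK33} yields a valid identity, and these specializations are precisely the twelve displayed factorization equalities together with the four telescoping-sum identities asserted here. In particular, the sign factor $(-1)^{k(k-1)/2}$ appearing in the power of the commutator, and the alternating/non-alternating patterns in the summation identities, carry over unchanged, since they are features of the formal identities themselves rather than of the specific matrix $B$.

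The ``hard part'' is thus not mathematical but purely notational: one must match each target identity in Corollary~\ref{TL33} against the correct source line of Corollary~\ref{TK33} and record the substitution $B = A^{\ast}$, taking care that the roles of the two mixed products $AB$ and $BA$ become $AA^{\ast}$ and $A^{\ast}A$ respectively (and similarly $ABA \mapsto AA^{\ast}A$, $BAB \mapsto A^{\ast}AA^{\ast}$, $A + B - I_m \mapsto A + A^{\ast} - I_m$, $A - B \mapsto A - A^{\ast}$, and $A + B \mapsto A + A^{\ast}$). Once this correspondence is fixed, the proof consists of a single sentence invoking Corollary~\ref{TK33} with $B = A^{\ast}$, and no independent verification of the individual formulas is needed.

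\begin{proof}
Since $A^2 = A$, taking the conjugate transpose gives $(A^{\ast})^2 = (A^2)^{\ast} = A^{\ast}$, so that $A^{\ast}$ is an idempotent matrix of the order $m$. Thus $A$ and $A^{\ast}$ form a pair of idempotent matrices of the same order, and all the identities of Corollary~\ref{TK33} apply to them. Setting $B = A^{\ast}$ in each identity of Corollary~\ref{TK33} yields the asserted identities.
\end{proof}
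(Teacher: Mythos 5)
Your proposal is correct and coincides with the paper's own treatment: the paper gives no separate proof of Corollary \ref{TL33}, justifying it (and the neighboring results \ref{TL31}--\ref{TL37}) by the remark preceding Theorem \ref{TL31} that $(A^{\ast})^2 = A^{\ast}$ follows from conjugating $A^2 = A$, so that one may replace $B$ with $A^{\ast}$ in the two-idempotent results. Your verification that the identities of Corollary \ref{TK33} depend on nothing beyond the idempotency of the pair, followed by the specialization $B = A^{\ast}$, is exactly that argument.
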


\begin{theorem} \label{TL37}
Let $A$  be an idempotent matrices of the order $m,$ and let $k$ be a positive integer$.$ Then$,$ the following matrix identities hold
\begin{align*}
A - AA^{*}A & = A(\, A - A^{*} \,)^2 = (\, A - A^{*} \,)^2A,
\\
A^{*} - A^{*}AA^{*} & = A^{*}(\, A - A^{*} \,)^2 = (\, A - A^{*} \,)^2A^{*},
\\
(\, A - AA^{*}A \,)^k & = A(\, A - A^{*} \,)^{2k} = (\, A - A^{*} \,)^{2k}A,
\\
(\, A^{*} - A^{*}AA^{*} \,)^k & = A^{*}(\, A - A^{*} \,)^{2k} = (\, A - A^{*} \,)^{2k}A^{*},
\\
AA^{*}A & = A(\, A + A^{*} - I_m \,)^2 = (\, A + A^{*} -I_m \,)^2A,
\\
A^{*}AA^{*} & = A^{*}(\, A +  A^{*} - I_m \,)^2 = (\, A + A^{*}  - I_m \,)^2A^{*},
\\
(AA^{*}A)^k & = A(\, A + A^{*} - I_m \,)^{2k} = (\, A + A^{*} -I_m \,)^{2k}A,
\\
(A^{*}AA^{*})^k & = A^{*}(\, A +  A^{*} - I_m \,)^{2k} = (\, A + A^{*}  - I_m \,)^{2k}A^{*},
\\
(A^{*}A)^2 & = A^{*}A(\, A + A^{*} - I_m \,)^2 = A^{*}(\, A + A^{*} -I_m \,)^2A,
\\
(AA^{*})^2 & = AA^{*}(\, A +  A^{*} - I_m \,)^2 = A(\, A + A^{*}  - I_m \,)^2A^{*},
\\
(AA^{*})^k & = A(\, A + A^{*} - I_m \,)^{k}A^{*},
\\
(A^{*}A)^k & = A^{*}(\, A +  A^{*} - I_m \,)^{k}A.
\end{align*}
\end{theorem}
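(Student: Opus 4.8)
The plan is to derive every identity in Theorem~\ref{TL37} as the special case $B = A^{\ast}$ of the corresponding identity in Theorem~\ref{TK37}. The one preliminary fact I need is that the conjugate transpose of an idempotent matrix is again idempotent: applying the conjugate transpose to $A^2 = A$ yields $(A^{\ast})^2 = (A^2)^{\ast} = A^{\ast}$. Consequently $A$ and $A^{\ast}$ form a pair of idempotent matrices of the same order $m$, so Theorem~\ref{TK37}---which is stated for an arbitrary pair of idempotent matrices of order $m$---applies directly with $B$ taken to be $A^{\ast}$.

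First I would set up the substitution dictionary. Replacing $B$ by $A^{\ast}$ sends $A - B$ to $A - A^{\ast}$, sends $A + B - I_m$ to $A + A^{\ast} - I_m$, and sends the four products $ABA$, $BAB$, $AB$, $BA$ to $AA^{\ast}A$, $A^{\ast}AA^{\ast}$, $AA^{\ast}$, $A^{\ast}A$, respectively. Reading the twelve identities \eqref{w32}--\eqref{w43} of Theorem~\ref{TK37} through this dictionary reproduces, line for line, exactly the twelve identities asserted in Theorem~\ref{TL37}. For example, \eqref{w33}, namely $B - BAB = B(A-B)^2 = (A-B)^2 B$, becomes $A^{\ast} - A^{\ast}AA^{\ast} = A^{\ast}(A - A^{\ast})^2 = (A - A^{\ast})^2 A^{\ast}$, while \eqref{w42}, namely $(AB)^k = A(A+B-I_m)^k B$, becomes $(AA^{\ast})^k = A(A + A^{\ast} - I_m)^k A^{\ast}$; the remaining ten cases are handled identically.

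Because the derivation of \eqref{w32}--\eqref{w43} invokes nothing about $B$ beyond the single relation $B^2 = B$ (in particular, it never uses commutativity of the two idempotents), the substitution $B = A^{\ast}$ is legitimate for all twelve identities at once and for every positive integer $k$, which completes the proof. I do not expect any genuine obstacle here: all of the algebraic content already resides in Theorem~\ref{TK37}, and the only step needing verification is the idempotency of $A^{\ast}$, which is the one-line computation given above. The argument is thus a transcription of an already-established theorem rather than a fresh calculation.
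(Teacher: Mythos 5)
Your proposal is correct and coincides with the paper's own justification: the paper introduces this group of results by observing that $(A^{\ast})^2 = A^{\ast}$ follows from conjugating $A^2 = A$, and then obtains Theorem~\ref{TL37} precisely by replacing $B$ with $A^{\ast}$ in Theorem~\ref{TK37}. Your substitution dictionary reproduces the twelve identities line for line, exactly as the paper intends.
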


\begin{theorem} \label{TL319}
Let $A$ be an idempotent matrix of the order $m,$ and
let $k$ be a positive integer$.$ Then$,$ the following results hold$.$
\begin{enumerate}
\item[{\rm (a)}] ${\mathscr R}(AA^{*} - A^{*}A) \subseteq
{\mathscr R}(A - A^{*})$ and ${\mathscr R}(AA^{*} - A^{*}A)
\subseteq {\mathscr R}(A + A^{*} - I_m).$

\item[{\rm (b)}] ${\mathscr R}(AA^{*} + A^{*}A) \subseteq
{\mathscr R}(A + A^{*})$ and ${\mathscr R}(AA^{*} + A^{*}A)
\subseteq {\mathscr R}(A + A^{*} - I_m).$

\item[{\rm (c)}]  ${\mathscr N}(A - A^{*}) \subseteq {\mathscr
N}(AA^{*} - A^{*}A)$ and ${\mathscr N}(A + A^{*} -I_m)
\subseteq {\mathscr N}(AA^{*} - A^{*}A).$

\item[{\rm (d)}]  ${\mathscr N}(A + A^{*}) \subseteq {\mathscr
N}(AA^{*} + A^{*}A)$ and ${\mathscr N}(A + A^{*} -I_m)
\subseteq {\mathscr N}(AA^{*} + A^{*}A).$

\item[{\rm (e)}] ${\mathscr R}(\,AA^{*} \pm  A^{*}A\,) = {\mathscr R}(\, A \pm A^{*} \,) \cap
  {\mathscr R}(\, A + A^{*} - I_m \,).$

\item[{\rm (f)}] ${\mathscr R}(\, AA^{*}A \pm A^{*}AA^{*} \,)  = {\mathscr R}(\, A \pm A^{*} \,) \cap
{\mathscr R}(\, A + A^{*} - I_m \,).$

\item[{\rm (g)}] ${\mathscr N}(\, AA^{*} \pm A^{*}A \,)  = {\mathscr N}(\, A \pm A^{*} \,) \cap
{\mathscr N}(\, A + A^{*} - I_m \,).$

\item[{\rm (h)}] ${\mathscr N}(\, AA^{*}A \pm A^{*}AA^{*} \,)  = {\mathscr N}(\, A \pm A^{*} \,)
\cap {\mathscr N}(\, A + A^{*} - I_m \,).$

\end{enumerate}
\end{theorem}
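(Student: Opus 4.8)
The plan is to specialize the two-idempotent results of this section to the pair $(A, A^{*})$. The starting observation is that $A^{*}$ is itself idempotent, since $(A^{*})^2 = (A^2)^{*} = A^{*}$. Consequently every identity, inclusion, and equality previously established for a pair of idempotent matrices $A$ and $B$ applies verbatim after the substitution $B = A^{*}$, with $AB - BA$, $AB + BA$, $ABA \pm BAB$, $A - B$, $A + B$, and $A + B - I_m$ becoming $AA^{*} - A^{*}A$, $AA^{*} + A^{*}A$, $AA^{*}A \pm A^{*}AA^{*}$, $A - A^{*}$, $A + A^{*}$, and $A + A^{*} - I_m$, respectively.

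For parts (a)--(d) I would simply read off Corollary \ref{TK34} with $k = 1$ under this substitution: the range inclusions (a)--(b) and the null-space inclusions (c)--(d) are exactly those four statements. For the range equalities (e) and the null-space equalities (g), I would substitute $B = A^{*}$ into the range identity \eqref{w62} for the minus sign (using $\R(I_m - A - A^{*}) = \R(A + A^{*} - I_m)$, since the two matrices differ by the scalar $-1$) and into \eqref{w63} of Theorem \ref{TK310} for the plus sign, together with the corresponding statements of Theorem \ref{TK311} for the null spaces. These yield (e) and (g) immediately.

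The one genuinely new ingredient appears in (f) and (h). Substituting $B = A^{*}$ into \eqref{w64}--\eqref{w65} and into the middle identities of Theorem \ref{TK311} produces the intersections $\R(A \pm A^{*}) \cap \R[(A + A^{*} - I_m)^2]$ and $\N(A \pm A^{*}) \cap \N[(A + A^{*} - I_m)^2]$, whose second factors carry a square, whereas the stated theorem has no square. The point, which is the main step to verify, is that $A + A^{*} - I_m$ is Hermitian, because $(A + A^{*} - I_m)^{*} = A^{*} + A - I_m = A + A^{*} - I_m$. For any Hermitian matrix $H$ one has $\R(H^2) = \R(H)$ and $\N(H^2) = \N(H)$: indeed $\N(H) = \N(H^{*}) = \R(H)^{\perp}$, and $H$ acts injectively on $\R(H)$, so no vector of $\R(H)$ other than $0$ lies in $\N(H)$, forcing the powers of $H$ to share its range and kernel. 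This Hermitian collapse turns the squared factors into $\R(A + A^{*} - I_m)$ and $\N(A + A^{*} - I_m)$, giving (f) and (h) in the form stated. No part requires computation beyond these substitutions and this single observation about powers of a Hermitian matrix.
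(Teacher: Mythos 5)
Your proof is correct and takes essentially the same route as the paper: the paper presents Theorem \ref{TL319} precisely as a consequence of substituting $B = A^{*}$ (idempotent since $(A^{*})^2 = (A^2)^{*} = A^{*}$) into the preceding two-idempotent results, namely Corollary \ref{TK34}, the range equality \eqref{w62}, and Theorems \ref{TK310} and \ref{TK311}, exactly as you do. Your explicit verification that $\R[(A + A^{*} - I_m)^2] = \R(A + A^{*} - I_m)$ and $\N[(A + A^{*} - I_m)^2] = \N(A + A^{*} - I_m)$ because $A + A^{*} - I_m$ is Hermitian is a detail the paper leaves implicit, and it is genuinely needed to pass from the squared factors in \eqref{w64}--\eqref{w65} and Theorem \ref{TK311} to the square-free statements (f) and (h).
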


\begin{theorem} \label{TL312}
Let $A$  be an idempotent matrices of the order $m.$ Then the following two identities
\begin{align}
(\, I_m +  \alpha A +  \beta A^{*} \,) & = (\, I_m +  \alpha A \,)
[\, I_m -(\alpha\beta)(1 + \alpha)^{-1}(1 + \beta)^{-1}AA^{*}\,](\, I_m + \beta A^{*} \,),
\label{pp391}
\\
(\, I_m +  \alpha A +  \beta A^{*} \,) & = (\, I_m + \beta A^{*}\,)[\, I_m -(\alpha\beta)(1 + \alpha)^{-1}(1 + \beta)^{-1}A^{*}A\,](\, I_m +  \alpha A \,)
\label{pp392}
\end{align}
hold for $\alpha \neq -1, 0$ and $\beta \neq -1, 0.$ In this case$,$ both $I_m +  \alpha A$ and $I_m + \beta A^{*}$ are nonsingular$.$ In particular$,$
$I_m - \lambda AA^{*}$ is nonsingular if and only if $I_m -  \alpha A  - \beta A^{*}$ is nonsingular$,$ in which case$,$
\begin{align}
& (\, I_m - \lambda AA^{*}\,)^{-1} = (\, I_m + \beta A^{*} \,)(\, I_m +  \alpha A  + \beta A^{*} \,)^{-1}(\, I_m  + \alpha A \,),
\label{pp393}
\\
& (\, I_m - \lambda A^{*}A\,)^{-1} = (\, I_m  + \alpha A \,)(\, I_m + \alpha A + \beta A^{*} \,)^{-1}(\,  I_m + \beta A^{*}\,)
\label{pp394}
\end{align}
hold$,$ where $\lambda = \alpha\beta(1 + \alpha)^{-1}(1 + \beta)^{-1}$.
\end{theorem}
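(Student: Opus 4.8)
The plan is to prove the two factorization identities \eqref{pp391} and \eqref{pp392} by directly expanding their right-hand sides, using only the idempotency relations $A^2 = A$ and $(A^{\ast})^2 = A^{\ast}$, and then to read off the nonsingularity claims and the inverse formulas \eqref{pp393}--\eqref{pp394} as purely formal consequences of these factorizations.

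First I would verify \eqref{pp391}. Expanding $(I_m + \alpha A)(I_m - \lambda AA^{\ast})$ and using $A\cdot AA^{\ast} = A^2A^{\ast} = AA^{\ast}$ collapses this product to $I_m + \alpha A - \lambda(1+\alpha)AA^{\ast}$. Multiplying on the right by $(I_m + \beta A^{\ast})$ and using $AA^{\ast}\cdot A^{\ast} = A(A^{\ast})^2 = AA^{\ast}$ then yields $I_m + \alpha A + \beta A^{\ast}$ together with a single $AA^{\ast}$ term whose coefficient is $\alpha\beta - \lambda(1+\alpha)(1+\beta)$. The one nontrivial point is the scalar identity $\lambda(1+\alpha)(1+\beta) = \alpha\beta$, which holds by the very definition $\lambda = \alpha\beta(1+\alpha)^{-1}(1+\beta)^{-1}$; this forces the $AA^{\ast}$ coefficient to vanish and proves \eqref{pp391}. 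Identity \eqref{pp392} follows from the same computation with the two outer factors interchanged, where $A^{\ast}A$ now plays the role of $AA^{\ast}$ throughout.

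Next I would record that $I_m + \alpha A$ and $I_m + \beta A^{\ast}$ are nonsingular for $\alpha \neq -1$ and $\beta \neq -1$. Seeking an inverse of the form $I_m + cA$ and using $(I_m + \alpha A)(I_m + cA) = I_m + (\alpha + c + \alpha c)A$, one solves $\alpha + c + \alpha c = 0$ to obtain $(I_m + \alpha A)^{-1} = I_m - \alpha(1+\alpha)^{-1}A$, with the analogous expression for $I_m + \beta A^{\ast}$. Since the outer factors in \eqref{pp391} are thereby invertible, $I_m + \alpha A + \beta A^{\ast}$ is nonsingular if and only if the middle factor $I_m - \lambda AA^{\ast}$ is nonsingular; the factorization \eqref{pp392} shows this is in turn equivalent to nonsingularity of $I_m - \lambda A^{\ast}A$ (alternatively one may invoke \eqref{hh25} applied to $A$ and $\lambda A^{\ast}$, giving $r(I_m - \lambda AA^{\ast}) = r(I_m - \lambda A^{\ast}A)$). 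Finally, inverting both sides of \eqref{pp391} and \eqref{pp392} and moving the invertible outer factors to the other side produces exactly \eqref{pp393} and \eqref{pp394}.

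The computation is entirely routine, so the only place demanding care is preserving the noncommutativity of $A$ and $A^{\ast}$ during the expansion, so that one never prematurely merges $AA^{\ast}$ with $A^{\ast}A$. The genuine content of the theorem is the single scalar cancellation $\lambda(1+\alpha)(1+\beta) = \alpha\beta$: it is precisely what makes the cross term disappear, and it is the reason $\lambda$ is defined the way it is.
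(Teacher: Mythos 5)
Your proof is correct: the expansion of $(I_m+\alpha A)(I_m-\lambda AA^{\ast})(I_m+\beta A^{\ast})$ does collapse to $I_m+\alpha A+\beta A^{\ast}$ plus an $AA^{\ast}$ term with coefficient $\alpha\beta-\lambda(1+\alpha)(1+\beta)$, which vanishes by the definition of $\lambda$, and the explicit inverse $(I_m+\alpha A)^{-1}=I_m-\alpha(1+\alpha)^{-1}A$ legitimately turns the factorizations into the nonsingularity equivalence and the formulas \eqref{pp393}--\eqref{pp394}. However, your route differs from the paper's. The paper does not prove this theorem by computation at all: it treats Theorem \ref{TL312} as the specialization $B=A^{\ast}$ of the two-idempotent Theorem \ref{TH312} (using that $A^{\ast}$ is again idempotent), and the factorization identities \eqref{391}--\eqref{392} of that theorem are not verified but quoted from the reference \cite{Vet}, with the inverse formulas then read off exactly as you do. What your argument buys is self-containedness — the reader need not chase the citation, and the computation makes visible that the only real content is the scalar cancellation $\lambda(1+\alpha)(1+\beta)=\alpha\beta$; what the paper's route buys is economy and the general two-idempotent statement, of which the present theorem is one instance. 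Your side remark that the equivalence of nonsingularity of $I_m-\lambda AA^{\ast}$ and $I_m-\lambda A^{\ast}A$ also follows from \eqref{hh25} is consistent with the paper's toolkit and is a nice redundancy check. One caution: the theorem's phrase ``$I_m-\alpha A-\beta A^{\ast}$ is nonsingular'' is inconsistent in sign with the inverse formulas \eqref{pp393}--\eqref{pp394}; like you, one should read it as $I_m+\alpha A+\beta A^{\ast}$, since replacing $(\alpha,\beta)$ by $(-\alpha,-\beta)$ would change $\lambda$.
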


In the remaining of this section, we present two groups of miscellaneous formulas for two/three idempotent matrices and their operations.

\begin{theorem} \label{TH312}
Let $A$ and $B$ be two idempotent matrices of the order $m.$ Then the following two identities
\begin{align}
(\, I_m +  \alpha A +  \beta B \,) & = (\, I_m +  \alpha A \,)
[\, I_m -(\alpha\beta)(1 + \alpha)^{-1}(1 + \beta)^{-1}AB\,](\, I_m + \beta B \,),
\label{391}
\\
(\, I_m +  \alpha A +  \beta B \,) & = (\, I_m + \beta B\,)[\, I_m -(\alpha\beta)(1 + \alpha)^{-1}(1 + \beta)^{-1}BA\,](\, I_m +  \alpha A \,)
\label{392}
\end{align}
hold for $\alpha \neq -1, 0$ and $\beta \neq -1, 0.$ In this case$,$ both $I_m +  \alpha A$ and $I_m + \beta B$ are nonsingular$.$ In particular$,$
$I_m - \lambda AB$ is nonsingular if and only if $I_m -  \alpha A  - \beta B$ is nonsingular$,$ in which case$,$
\begin{align}
& (\, I_m - \lambda AB\,)^{-1} = (\, I_m + \beta B \,)(\, I_m +  \alpha A  + \beta B \,)^{-1}(\, I_m  + \alpha A \,),
\label{393}
\\
& (\, I_m - \lambda BA\,)^{-1} = (\, I_m  + \alpha A \,)(\, I_m + \alpha A + \beta B \,)^{-1}(\,  I_m + \beta B\,)
\label{394}
\end{align}
hold$,$ where $\lambda = \alpha\beta(1 + \alpha)^{-1}(1 + \beta)^{-1}$.
\end{theorem}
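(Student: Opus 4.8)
The plan is to prove the two factorization identities \eqref{391} and \eqref{392} first, by direct expansion using only $A^2=A$ and $B^2=B$, and then to obtain every nonsingularity and inverse statement as a formal consequence. Write $\lambda=\alpha\beta(1+\alpha)^{-1}(1+\beta)^{-1}$ as in the statement, so that the middle factor in \eqref{391} is $I_m-\lambda AB$. To verify \eqref{391} I would expand $(I_m+\alpha A)(I_m-\lambda AB)(I_m+\beta B)$ from the left: multiplying the first two factors and applying $A^2=A$ to the term $A\cdot AB$ gives $I_m+\alpha A-\lambda(1+\alpha)AB$; multiplying this by $I_m+\beta B$ and applying $B^2=B$ to the term $AB\cdot B$ collects everything except the desired $I_m+\alpha A+\beta B$ into a single scalar multiple of $AB$, with coefficient $\alpha\beta-\lambda(1+\alpha)(1+\beta)$. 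The defining value of $\lambda$ is precisely the one for which $\lambda(1+\alpha)(1+\beta)=\alpha\beta$, so this coefficient is zero and \eqref{391} holds. Identity \eqref{392} is established identically, only with the two idempotents in the opposite order, so that the cross term to be cancelled is a multiple of $BA$ rather than $AB$.

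Next I would record the nonsingularity of the two outer factors under the hypotheses $\alpha,\beta\neq -1,0$. Since $A$ is idempotent its eigenvalues lie in $\{0,1\}$, so the eigenvalues of $I_m+\alpha A$ are $1$ and $1+\alpha$, both nonzero exactly when $\alpha\neq-1$; equivalently one may exhibit the explicit inverse $(I_m+\alpha A)^{-1}=I_m-\tfrac{\alpha}{1+\alpha}A$, whose correctness is checked in one line using $A^2=A$. The same reasoning gives the nonsingularity of $I_m+\beta B$ when $\beta\neq-1$.

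With both outer factors invertible, the remaining assertions are purely formal. Reading \eqref{391} as $I_m+\alpha A+\beta B=(I_m+\alpha A)(I_m-\lambda AB)(I_m+\beta B)$ and taking determinants shows, because the two outer determinants are nonzero, that $I_m+\alpha A+\beta B$ is nonsingular if and only if $I_m-\lambda AB$ is nonsingular, which is the claimed equivalence. Solving the same identity for the middle factor yields $I_m-\lambda AB=(I_m+\alpha A)^{-1}(I_m+\alpha A+\beta B)(I_m+\beta B)^{-1}$, and inverting both sides gives \eqref{393}; the companion formula \eqref{394} follows in exactly the same manner from \eqref{392}.

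There is no genuine obstacle in this argument: its entire content is the vanishing of the $AB$ (respectively $BA$) cross term, which depends solely on the normalization of $\lambda$. The only step requiring care is the bookkeeping in that expansion, namely applying $A^2=A$ and $B^2=B$ at the correct intermediate products and tracking the scalar coefficient, since an error there would leave a spurious cross term and destroy the identity.
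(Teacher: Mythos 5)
Your proof is correct, and it is more self-contained than the paper's. The paper does not actually verify \eqref{391} and \eqref{392}: it simply cites them from the reference [Vet] (Vetoshkin, Comput.\ Math.\ Math.\ Phys.\ 54(2014)) and then states that \eqref{393} and \eqref{394} "follow directly." Your direct expansion --- computing $(I_m+\alpha A)(I_m-\lambda AB)(I_m+\beta B)$, using $A^2=A$ and $B^2=B$ to collect everything into the single cross term $[\alpha\beta-\lambda(1+\alpha)(1+\beta)]AB$, which vanishes by the normalization of $\lambda$ --- supplies exactly the computation the paper leaves to the literature, and your explicit inverse $(I_m+\alpha A)^{-1}=I_m-\tfrac{\alpha}{1+\alpha}A$ cleanly justifies the nonsingularity of the outer factors, which the paper asserts without comment. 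The derivation of \eqref{393} and \eqref{394} (isolate the middle factor, invert) is the same formal step in both treatments, whether one phrases the equivalence via determinants or via the factorization itself. What your route buys is a complete argument inside the paper; what the paper's route buys is brevity and attribution. One small point: the theorem's wording states the equivalence with ``$I_m-\alpha A-\beta B$,'' while the inverse formulas use $I_m+\alpha A+\beta B$; you silently (and correctly) read the former as a typo for the latter, which is the only reading consistent with \eqref{391} and \eqref{393}.
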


\begin{proof}
Eqs.\,\eqref{391} and \eqref{392} were given in \cite{Vet}.  Eqs.\,\eqref{393} and \eqref{394} follow directly from \eqref{391} and \eqref{392}.
\end{proof}

It is also worth to discuss the following two families of reverse order laws for generalized inverses
\begin{align}
& (\, I_m - \lambda AB\,)^{(i,\ldots,j)} = (\, I_m + \beta B \,)(\, I_m +  \alpha A  + \beta B \,)^{(i,\ldots,j)} (\, I_m  + \alpha A \,),
\label{395}
\\
& (\, I_m  - \lambda BA\,)^{(i,\ldots,j)} = (\, I_m  + \alpha A \,)(\, I_m +  \alpha A  + \beta B \,)^{(i,\ldots,j)}(\,  I_m + \beta B\,)
\label{396}
\end{align}
associated with \eqref{391} and  \eqref{392}.

Concerning relationships among three idempotent matrices of the same size,  we have the following fundamental formulas and facts.

\begin{theorem} \label{TH313}
Let $A,$ $B,$ and $C$ be three idempotent matrices of the order $m,$ and denote $M = A + B + C.$ Then the following four identities
\begin{align}
& \alpha(AB + AC) + \beta (BA + BC) + \gamma(CA + CB)  = (\alpha A +  \beta B  +  \gamma C \,)(\,M - I_m),
\label{397}
\\
& \alpha(BA + CA) + \beta (AB + CB) + \gamma(AC + BC)  = (M - I_m)(\alpha A +  \beta B  +  \gamma C),
\label{398}
\end{align}
and
\begin{align}
&(\alpha + \beta)(AB + BA) + (\alpha + \gamma)(AC + CA) + (\beta + \gamma)(BC + CB) \nb
\\
& = (\alpha A +  \beta B  +  \gamma C)(\,M - I_m) + (M - I_m)(\alpha A +  \beta B  +  \gamma C),
\label{399}
\\
&(\alpha - \beta)(AB - BA) + (\alpha - \gamma)(AC - CA) + (\beta - \gamma)(BC - CB) \nb
\\
& = (\alpha A +  \beta B  +  \gamma C)M - M(\alpha A +  \beta B  +  \gamma C),
\label{3100}
\\
& \alpha(B + C)A(B + C)  + \beta (A + C)B(A + C) + \gamma(A + B)C(A + B) \nb
 \\
 & = (M - I_m)(\alpha A +  \beta B  +  \gamma C)(M - I_m)
\label{3100a}
\end{align}
hold for any three scalars $\alpha,$ $\beta,$ and $\gamma;$  the following identities hold
\begin{align}
&  (A + B)^2 +  (A + C)^2 +  (B + C)^2  = M(I_m + M),
\label{3106}
\\
&  (A - B)^2 +  (A - C)^2 +  (B - C)^2  = M(3I_m - M) = 9/4I_m - (M - 3/2I_m)^2,
\label{3107}
\\
&  AB + BA +  AC + CA +  BC + CB =  M(M - I_m) = (M - 2^{-1}I_m)^2 - 4^{-1}I_m,
\label{3108}
\\
& (AB + BA +  AC + CA +  BC + CB)^k =  M^k(M - I_m)^k, \ \ k = 1, 2, \ldots;
\label{3109}
\end{align}
the following rank formulas
\begin{align}
& r[(A + B)^2 +  (A + C)^2 +  (B + C)^2]  = r(M) + r(I_m + M) - m = \dim[{\mathscr R}(M) \cap {\mathscr R}(I_m + M)],
\label{3110}
\\
& r[(A - B)^2 +  (A - C)^2 +  (B - C)^2]  = r(M) + r(3I_m - M) - m = \dim[{\mathscr R}(M) \cap {\mathscr R}(3I_m - M)],
\label{3111}
\\
& r(kI_m - AB - BA -  AC - CA -  BC - CB) \nb
\\
& =  r\!\left[(\sqrt{4k +1} + 1)/2I_m - M \right] +
r\!\left[\,(\sqrt{4k +1} - 1)/2I_m + M \right] - m, \ \ k = 0, 1, \ldots, 6
\label{3112}
\end{align}
hold$;$ and the following range equalities hold
\begin{align}
& \R[(A + B)^2 +  (A + C)^2 +  (B + C)^2] =  \R(M)\cap \R(I_m + M),
\label{3112a}
\\
& \R[ (A - B)^2 +  (A - C)^2 +  (B - C)^2 ] =  \R(M)\cap \R(3I_m - M),
\label{3112b}
\\
& \R(AB + BA +  AC + CA +  BC + CB) =  \R(M)\cap \R(I_m - M),
\label{3112c}
\end{align}
and
\begin{align}
& \N[(A + B)^2 +  (A + C)^2 +  (B + C)^2] =  \R(N)\cap \N(I_m + M),
\label{3112d}
\\
& \N[ (A - B)^2 +  (A - C)^2 +  (B - C)^2 ] =  \N(M)\cap \N(3I_m - M),
\label{3112e}
\\
& \N(AB + BA +  AC + CA +  BC + CB) =  \N(M)\cap \N(I_m - M).
\label{3112f}
\end{align}
In particular$,$
\begin{enumerate}
\item[{\rm (a)}] the following facts hold
\begin{align*}
& \alpha(AB + AC) + \beta (BA + BC) + \gamma(CA + CB)  = 0   \Leftrightarrow (\alpha A +  \beta B  +  \gamma C)(M - I_m) = 0,
\\
& \alpha(BA + CA) + \beta (AB + CB) + \gamma(AC + BC)  = 0   \Leftrightarrow  (M - I_m)(\alpha A +  \beta B  +  \gamma C) = 0,
\end{align*}
and
\begin{align*}
& (\alpha + \beta)(AB + BA) + (\alpha + \gamma)(AC + CA) + (\beta + \gamma)(BC + CB)   =0   \nb
 \\
& \Leftrightarrow  (\alpha A +  \beta B  +  \gamma C \,)(\,M - I_m) + (M - I_m)(\alpha A +  \beta B  +  \gamma C) =0,
\\
&(\alpha - \beta)(AB - BA) + (\alpha - \gamma)(AC - CA) + (\beta - \gamma)(BC - CB) =0 \nb
\\
&   \Leftrightarrow (\alpha A +  \beta B  +  \gamma C \,)M = M(\alpha A +  \beta B  +  \gamma C),
\\
& \alpha(B + C)A(B + C)  + \beta (A + C)B(A + C) + \gamma(A + B)C(A + B) =0  \nb
\\
&\Leftrightarrow  (M - I_m)(\alpha A +  \beta B  +  \gamma C)(M - I_m) =0;
\end{align*}

\item[{\rm (b)}]  the following facts hold
\begin{align*}
& (A + B)^2 +  (A + C)^2 +  (B + C)^2  = 0  \Leftrightarrow  M^2 + M =0,
\\
& (A + B)^2 +  (A + C)^2 +  (B + C)^2  = I_m  \Leftrightarrow  M^2 + M = I_m,
\\
& (A - B)^2 +  (A - C)^2 +  (B - C)^2  = 0  \Leftrightarrow  (2M - 3I_m)^2  = 9I_m,
\\
& (A - B)^2 +  (A - C)^2 +  (B - C)^2  = 9/8I_m  \Leftrightarrow  (2M - 3I_m)^2  =  9I_m,
\\
& (A - B)^2 +  (A - C)^2 +  (B - C)^2  = 3I_m  \Leftrightarrow  (2M - 3I_m)^2  =  -3I_m,
\\
& (A - B)^2 +  (A - C)^2 +  (B - C)^2  = 9/4I_m  \Leftrightarrow  (2M - 3I_m)^2  = 0,
\\
&  AB + BA +  AC + CA +  BC + CB = kI_m  \Leftrightarrow (I_m - 2M)^2 = (4k +1)I_m,
\ \ k = 0, 1, \ldots, 6;
\end{align*}

\item[{\rm (c)}] $\alpha(AB + AC) + \beta (BA + BC) + \gamma(CA + CB)$  is nonsingular $\Leftrightarrow$  $\alpha(BA + CA) + \beta (AB + CB) + \gamma(AC + BC)$ is nonsingular  $\Leftrightarrow$ $\alpha(B + C)A(B + C)  + \beta (A + C)B(A + C) + \gamma(A + B)C(A + B)$ $\Leftrightarrow$ both $\alpha A + \beta B  + \gamma C$ and  $M - I_m$ are nonsingular; in which cases$,$ the following equalities hold
\begin{align*}
& [\alpha(AB + AC) + \beta (BA + BC) + \gamma(CA + CB)]^{-1}  = (M - I_m)^{-1}(\, \alpha A +  \beta B  +  \gamma C \,)^{-1},
\\
& [\alpha(BA + CA) + \beta (AB + CB) + \gamma(AC + BC)]^{-1}  = (\,\alpha A +  \beta B  +  \gamma C \,)^{-1},
(M - I_m)^{-1},
\end{align*}
and
\begin{align*}
& [\alpha(B + C)A(B + C)  + \beta (A + C)B(A + C) + \gamma(A + B)C(A + B)]^{-1}
\\
& = (M - I_m)^{-1}(\alpha A +  \beta B  +  \gamma C)^{-1}(M - I_m)^{-1};
\end{align*}

\item[{\rm (d)}] the following facts hold
\begin{align*}
& r[(A + B)^2 +  (A + C)^2 +  (B + C)^2]  = m  \Leftrightarrow r(M) = r(I_m + M) = m,
\\
& r[(A - B)^2 +  (A - C)^2 +  (B - C)^2]  = m  \Leftrightarrow r(M) = r(3I_m - M) = m,
\end{align*}
and
\begin{align*}
& r(kI_m - AB - BA -  AC - CA -  BC - CB) =m \nb
\\
 & \Leftrightarrow   r\!\left[(\sqrt{4k +1} + 1)/2I_m - M \right] =
r\!\left[\,(\sqrt{4k +1} - 1)/2I_m + M \right] = m, \ \ k = 0, 1, \ldots, 6.
\end{align*}
\end{enumerate}
\end{theorem}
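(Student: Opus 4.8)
The plan is to split this long statement into four tiers—linear factorizations, quadratic identities, rank/subspace formulas, and specializations—and to show each tier is a formal consequence of the previous one together with the elementary rank identities \eqref{hh21}--\eqref{hh22} and the intersection formula \eqref{w75}. First I would establish the factorizations \eqref{397}--\eqref{3100a}. Equations \eqref{397} and \eqref{398} come from expanding the right-hand products and using $A^2=A$, $B^2=B$, $C^2=C$ to cancel the diagonal squares; for instance $(\alpha A+\beta B+\gamma C)(M-I_m)$ expands to $\alpha(A^2+AB+AC-A)+\cdots$ and collapses term by term. Then \eqref{399} and \eqref{3100} are exactly the sum and difference of \eqref{397} and \eqref{398} after collecting the coefficients of $AB,BA,\dots$, while \eqref{3100a} reduces to the single observation $(M-I_m)A(M-I_m)=(B+C)A(B+C)$ and its cyclic analogues, which follows from $(M-I_m)A=(B+C)A$ and $A(M-I_m)=A(B+C)$.

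The quadratic identities \eqref{3106}--\eqref{3109} I would obtain from the master relation $M^2=M+(AB+BA+AC+CA+BC+CB)$, i.e.\ the six products sum to $M^2-M=M(M-I_m)$, which is \eqref{3108}. Expanding each square and substituting this gives $(A+B)^2+(A+C)^2+(B+C)^2=2M+(M^2-M)=M(I_m+M)$ for \eqref{3106} and the analogous $M(3I_m-M)$ for \eqref{3107}; then \eqref{3109} is immediate since $M$ commutes with $M-I_m$, so $[M(M-I_m)]^k=M^k(M-I_m)^k$.

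Next come the rank formulas \eqref{3110}--\eqref{3112}. Via \eqref{3106}--\eqref{3108} the left sides become $r[M(I_m+M)]$, $r[M(3I_m-M)]$ and $r(M^2-M-kI_m)$, each the rank of a product of two commuting polynomials in $M$. The clean observation is that whenever two commuting matrices $P,Q$ satisfy $P+Q=cI_m$ or $P-Q=cI_m$ with $c\neq 0$, rescaling by $1/c$ turns the pair into an $A,\,I_m-A$ pair, so \eqref{hh22} gives $r(PQ)=r(P)+r(Q)-m$. Applying this to $(P,Q)=(M,I_m+M)$ and $(M,3I_m-M)$ yields \eqref{3110} and \eqref{3111}; for \eqref{3112} I would first factor $M^2-M-kI_m=(M-\lambda_+I_m)(M-\lambda_-I_m)$ with $\lambda_\pm=(1\pm\sqrt{4k+1})/2$, whose factors commute and differ by $(\lambda_--\lambda_+)I_m=-\sqrt{4k+1}\,I_m\neq 0$ (guaranteed by $0\le k\le 6$), and then note that $\lambda_+I_m-M=(\sqrt{4k+1}+1)/2\,I_m-M$ and $M-\lambda_-I_m=(\sqrt{4k+1}-1)/2\,I_m+M$ are precisely the factors named on the right of \eqref{3112}. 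The $\dim$-intersection forms then follow from \eqref{w75} once one observes $r[M,\,cI_m\pm M]=m$, since a single column operation turns that block into $[M,\,cI_m]$. The range identities \eqref{3112a}--\eqref{3112c} follow the dimension-counting template of Theorem \ref{TK310}: each left side is $\R$ of a product of two commuting factors, hence contained in the intersection of their ranges, and the rank formulas just proved show the two subspaces have equal dimension, forcing equality; the null-space identities \eqref{3112d}--\eqref{3112f} are handled by the dual count $\dim\N(X)=m-r(X)$ applied to the same coprime-factor product structure.

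Finally, the specializations (a)--(d) are read directly off the preceding lines. Part (a) is ``product $=0$ $\Leftrightarrow$ the matching factorization in \eqref{397}--\eqref{3100a} vanishes''; part (b) rewrites the quadratic identities \eqref{3106}--\eqref{3108} for prescribed values of the left side as quadratic equations in $M$; part (c) uses that a square product $XY$ is nonsingular iff both factors are, together with the inverse formulas read off \eqref{397}, \eqref{398}, \eqref{3100a}; and part (d) is the statement that $r(XY)=m$ iff $r(X)=r(Y)=m$, applied to $r(XY)=r(X)+r(Y)-m$ from \eqref{3110}--\eqref{3112}. I expect the only genuinely delicate step to be \eqref{3112}: getting the quadratic factorization, keeping the sign bookkeeping between $\lambda_\pm$ and the matrices $(\sqrt{4k+1}\pm 1)/2\,I_m\mp M$ straight, and carrying out the rescaling reduction to \eqref{hh22} correctly. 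Everything else is routine manipulation of commuting polynomials in $M$.
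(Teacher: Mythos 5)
Your proposal follows the same route as the paper for everything up to the rank formulas: \eqref{397}--\eqref{3100a} by direct expansion using idempotency, \eqref{399} and \eqref{3100} as the sum and difference of \eqref{397} and \eqref{398}, \eqref{3106}--\eqref{3109} from the master expansion $M^2 = M + (AB+BA+AC+CA+BC+CB)$, and the rank formulas \eqref{3110}--\eqref{3112} from an elementary identity of Section 2 --- the paper completes the square so as to apply \eqref{hh21}, you factor the quadratic in $M$ so as to apply \eqref{hh22}; the two are interchangeable, and your bookkeeping for \eqref{3112} (the roots $\lambda_{\pm} = (1\pm\sqrt{4k+1})/2$, the rescaling to an $X$, $I_m - X$ pair) is correct. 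You in fact go further than the paper's proof, which is silent on the $\dim$-intersection statements and on \eqref{3112a}--\eqref{3112f} altogether: your inclusion-plus-dimension-count argument, i.e.\ the template of Theorem \ref{TK310} together with \eqref{w75} and $r[M, \, cI_m \pm M] = m$, is a complete and correct proof of \eqref{3112a}--\eqref{3112c}.

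The gap is the claim that a ``dual count'' disposes of \eqref{3112d}--\eqref{3112f}. It cannot, because those three identities are false as printed: for each pair of factors involved, a sum or difference of the two factors is a nonzero multiple of $I_m$ (namely $(I_m + M) - M = I_m$, $(3I_m - M) + M = 3I_m$, $(I_m - M) + M = I_m$), so any vector lying in both null spaces is zero and the right-hand sides of \eqref{3112d}--\eqref{3112f} are always $\{0\}$, whereas the left-hand sides are nonzero whenever the product is singular --- take $A = B = C = 0$, for which the left side of \eqref{3112f} is all of ${\mathbb C}^m$. What your dimension count actually proves is the sum version: $\N(XY) \supseteq \N(X) + \N(Y)$ for commuting $X$, $Y$, the sum is direct because the intersection is trivial, and $\dim[\N(X) \oplus \N(Y)] = (m - r(X)) + (m - r(Y)) = m - r(XY) = \dim \N(XY)$ by the rank formulas just proved, whence $\N(XY) = \N(X) \oplus \N(Y)$. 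So the correct statements replace $\cap$ by $+$ (the same correction is needed in Theorem \ref{TK311}, and \eqref{3112d} additionally misprints $\N(M)$ as $\R(N)$). Had you carried out the dual count explicitly rather than asserting it, your own method would have exposed this; as written, the proposal certifies identities that are not true.
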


\begin{proof}
Eqs.\,\eqref{397}, \eqref{398},  and \eqref{3100a} follow from expanding both sides of the three equalities. The sum and difference of \eqref{397} and \eqref{398} result in \eqref{399} and \eqref{3100}, respectively. Eqs.\,\eqref{3106}--\eqref{3109} follow from direct expansions.
Applying \eqref{hh21} to \eqref{3106}--\eqref{3108} leads to \eqref{3110}--\eqref{3112}.
Results (a)--(d) follow from \eqref{397}--\eqref{3112}.
\end{proof}

In addition, it would be of interest to discuss the following reverse order laws for generalized inverses
\begin{align*}
& [\alpha(AB + AC) + \beta (BA + BC) + \gamma(CA + CB)]^{(i,\ldots,j)}  = (M - I_m)^{(i,\ldots,j)}(\, \alpha A +  \beta B  +  \gamma C \,)^{(i,\ldots,j)},
\\
& [\alpha(BA + CA) + \beta (AB + CB) + \gamma(AC + BC)]^{(i,\ldots,j)}  = (\,\alpha A +  \beta B  +  \gamma C \,)^{(i,\ldots,j)},
(M - I_m)^{(i,\ldots,j)},
\end{align*}
and
\begin{align*}
& [\alpha(B + C)A(B + C)  + \beta (A + C)B(A + C) + \gamma(A + B)C(A + B)]^{(i,\ldots,j)}
\\
& = (M - I_m)^{(i,\ldots,j)}(\alpha A +  \beta B  +  \gamma C)^{(i,\ldots,j)}(M - I_m)^{(i,\ldots,j)}.
\end{align*}

\section[4]{Bounds of ranks of some matrix pencils composed by two idempotent matrices}

For a given singular matrix $A$, the two products $AA^{-}$ and $A^{-}A$ are not necessarily unique, but they both
are idempotent matrices for all $A^-$. In this case, people are interested in the performance of
idempotents associated generalized inverses and their operations; see e.g., \cite{BR,CX,DW,ND,T0}. In this section, we approach the ranks of the following four characteristic matrices
 \begin{align}
 \lambda I_m + AA^{-} \pm  BB^{-}, \ \  \lambda I_m + AA^{-}\pm C^{-}C
\label{d11}
\end{align}
associated with the idempotent matrices $A^{-}$, $B^{-}$, and $C^{-}$, where $\lambda$ is a scalar. It is obvious that the ranks of the four matrix expressions in
\eqref{d11} are all functions of $\lambda$, $A^{-}$, $B^{-}$, and $C^{-}$. Thus we are interested in the maximum and minimum ranks, as well as rank
distributions of the four matrix expressions. To determine the two ranks, we need to use the following known formulas.

\begin{lemma}[\cite{MS:1974}] \label{TN41}
Let $ A \in {\mathbb C}^{m \times n}, \, B \in {\mathbb C}^{m \times k},$ $C \in {\mathbb C}^{l \times n},$ and
$D \in {\mathbb C}^{l \times k}.$ Then
\begin{align}
r[A, \, B]  =   r(A) + r(E_AB), \ \ \ r\!\begin{bmatrix}  A  \\ C  \end{bmatrix} = r(A) +r(CF_A).
\label{k42}
\end{align}
\end{lemma}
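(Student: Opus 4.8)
The plan is to establish each of the two identities in \eqref{k42} by a single rank-preserving block operation followed by a dimension count that exploits the orthogonal-projector structure of $E_A$ and $F_A$. For the first identity, I would begin by right-multiplying $[\,A,\,B\,]$ by the block matrix $\begin{bmatrix} I_n & -A^{\dag}B \\ 0 & I_k \end{bmatrix}$, which is nonsingular (block upper triangular with identity diagonal blocks) and therefore preserves rank. A direct computation gives
\begin{align*}
[\,A,\,B\,]\begin{bmatrix} I_n & -A^{\dag}B \\ 0 & I_k \end{bmatrix} = [\,A,\, B - AA^{\dag}B\,] = [\,A,\, E_AB\,],
\end{align*}
so that $r[\,A,\,B\,] = r[\,A,\,E_AB\,]$.

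The crux of the argument is then to show that $r[\,A,\,E_AB\,] = r(A) + r(E_AB)$, i.e. that the two blocks contribute independently to the rank. Here I would use that $E_A = I_m - AA^{\dag}$ is the orthogonal projector onto $\R(A)^{\perp}$, whence every column of $E_AB$ lies in $\R(E_A) = \R(A)^{\perp}$. Thus $\R(A)$ and $\R(E_AB)$ are orthogonal subspaces, their sum is direct, and $r[\,A,\,E_AB\,] = \dim[\,\R(A)+\R(E_AB)\,] = r(A)+r(E_AB)$, which delivers the first formula.

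For the second identity I would proceed dually. Applying the first formula to the conjugate transposes and using $(A^{\ast})^{\dag}=(A^{\dag})^{\ast}$ together with the Hermitian-ness of $A^{\dag}A$ gives $E_{A^{\ast}} = I_n - A^{\ast}(A^{\dag})^{\ast} = I_n - (A^{\dag}A)^{\ast} = I_n - A^{\dag}A = F_A$, so that
\begin{align*}
r\!\begin{bmatrix} A \\ C \end{bmatrix} = r[\,A^{\ast},\,C^{\ast}\,] = r(A^{\ast}) + r(E_{A^{\ast}}C^{\ast}) = r(A) + r(F_AC^{\ast}),
\end{align*}
and since $F_A$ is Hermitian, $r(F_AC^{\ast}) = r[\,(CF_A)^{\ast}\,] = r(CF_A)$. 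Alternatively one may mirror the first argument by left-multiplying $\begin{bmatrix} A \\ C\end{bmatrix}$ by the nonsingular matrix $\begin{bmatrix} I_m & 0 \\ -CA^{\dag} & I_l\end{bmatrix}$ to replace $C$ by $CF_A$, and then invoke orthogonality of the row spaces $\R(A^{\ast})$ and $\R(F_AC^{\ast}) \subseteq \R(A^{\ast})^{\perp}$.

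The main obstacle is not computational but conceptual: the decisive point is the orthogonality claim $\R(E_AB)\subseteq\R(A)^{\perp}$ that forces the two ranks to add rather than merely satisfy an inequality. Everything else reduces to verifying nonsingularity of the triangular block multipliers and routine bookkeeping with the Penrose equations.
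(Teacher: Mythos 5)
Your proof is correct, but there is nothing in the paper to compare it against: the paper states this lemma as a known result quoted directly from Marsaglia and Styan \cite{MS:1974} and supplies no proof at all, so your argument fills in what the paper leaves as a citation. Your route is the standard self-contained one. The multiplier $\begin{bmatrix} I_n & -A^{\dag}B \\ 0 & I_k \end{bmatrix}$ is nonsingular and turns $[\,A,\,B\,]$ into $[\,A,\,E_AB\,]$ without changing the rank; since $AA^{\dag}$ is the orthogonal projector onto $\R(A)$ (Penrose equations (i) and (iii)), every column of $E_AB$ lies in $\R(A)^{\perp}$, so $\R(A)\cap\R(E_AB)=\{0\}$ and $r[\,A,\,E_AB\,]=\dim[\R(A)+\R(E_AB)]=r(A)+r(E_AB)$; and the passage to the column-block identity by conjugate transposition is legitimate because $(A^{\ast})^{\dag}=(A^{\dag})^{\ast}$ and $A^{\dag}A$ is Hermitian, giving $E_{A^{\ast}}=F_A$ exactly as you compute. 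One remark worth recording: the orthogonality you call the ``crux'' is convenient but stronger than necessary. Trivial intersection already follows from idempotency alone: if $x=Az=(I_m-AA^{-})Bw$ for any $\{1\}$-inverse $A^{-}$, then $AA^{-}x=AA^{-}Az=Az=x$ while $AA^{-}(I_m-AA^{-})Bw=0$, so $x=0$. This shows $r[\,A,\,B\,]=r(A)+r[(I_m-AA^{-})B]$ for every $A^{-}$, which is the generality in which the cited reference proves the formula; your Moore--Penrose version is the special case used throughout the paper.
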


\begin{lemma} [\cite{T1}] \label{TN43}
The maximum and minimum ranks of the linear matrix-valued function $A - B_1X_1C_1 -B_2X_2C_2$ with respect to the two variable matrices $X_1$ and $X_2$ are given by
\begin{align}
&  \max_{X_1, \, X_2}\!r(\, A - B_1X_1C_1 -B_2X_2C_2 \,)  = \min\!\left\{ r[\, A, \, B_1, \, B_2 \,], \
 r\!\begin{bmatrix} A  \\ C_1 \\ C_2 \end{bmatrix}\!, \ r\!\begin{bmatrix}  A
& B_1\\ C_2   & 0 \end{bmatrix}\!, \
r\!\begin{bmatrix} A & B_2 \\ C_1  & 0 \end{bmatrix} \right\},
\label{110}
\\
& \min_{X_1, \, X_2}\!r(\, A - B_1X_1C_1 -B_2X_2C_2 \,) = r\!\begin{bmatrix}  A  \\ C_1 \\ C_2 \end{bmatrix} + r[ \, A, \, B_1, \,  B_2 \,] \nb
 \\
& \  + \max\left\{\! r\!\begin{bmatrix}A & B_1\\ C_2 & 0\end{bmatrix} - r\!\begin{bmatrix} A
& B_1 & B_2  \\ C_2   & 0 & 0 \end{bmatrix}
-r\begin{bmatrix} A
& B_1 \\ C_1   & 0 \\ C_2 & 0 \end{bmatrix}\!, \   r\!\begin{bmatrix}  A
& B_2 \\ C_1 & 0 \end{bmatrix}  -r\!\begin{bmatrix} A
& B_1 & B_2  \\ C_1   & 0 & 0 \end{bmatrix} - r\!\begin{bmatrix} A
& B_2 \\ C_1   & 0 \\ C_2 & 0  \end{bmatrix} \!\right\}\!.
\label{111}
\end{align}
\end{lemma}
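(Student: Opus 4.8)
The plan is to reduce the two-variable problem to the single-variable maximal and minimal rank formulas
\begin{align*}
\max_{X}r(\,A - BXC\,) &= \min\!\left\{ r[\,A, \, B\,], \ \ r\!\begin{bmatrix} A \\ C\end{bmatrix} \right\}\!, \\
\min_{X}r(\,A - BXC\,) &= r[\,A, \, B\,] + r\!\begin{bmatrix} A \\ C\end{bmatrix} - r\!\begin{bmatrix} A & B \\ C & 0\end{bmatrix}\!,
\end{align*}
applied twice, once in $X_1$ and once in $X_2$. Writing $A_2 = A - B_2X_2C_2$, I would first treat $A - B_1X_1C_1 - B_2X_2C_2 = A_2 - B_1X_1C_1$ as a function of $X_1$ alone with $X_2$ held fixed, and only afterwards optimize over $X_2$. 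The point of this order is that every coefficient block produced by the inner step is again a linear matrix function of the remaining variable $X_2$, to which the same single-variable formulas apply.

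For the maximum \eqref{110}, the inner step gives
\begin{align*}
\max_{X_1}r(\,A_2 - B_1X_1C_1\,) &= \min\!\left\{ r[\,A_2, \, B_1\,], \ \ r\!\begin{bmatrix} A_2 \\ C_1\end{bmatrix} \right\}\!,
\end{align*}
and since $[\,A_2, \, B_1\,] = [\,A, \, B_1\,] - B_2X_2[\,C_2, \, 0\,]$ and $\begin{bmatrix} A_2 \\ C_1\end{bmatrix} = \begin{bmatrix} A \\ C_1\end{bmatrix} - \begin{bmatrix} B_2 \\ 0\end{bmatrix}X_2C_2$, maximizing each of the two entries separately over $X_2$ by the single-variable formula reproduces exactly the four block ranks appearing in \eqref{110}, which gives the upper bound through $\max_{X_2}\min \le \min\max_{X_2}$. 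The same upper bound follows more transparently by a direct block argument: appending $B_1,B_2$ as extra columns (respectively $C_1,C_2$ as extra rows) cannot decrease the rank and lets the perturbation terms be cancelled by elementary block operations, so the rank is at most $r[\,A, \, B_1, \, B_2\,]$ and at most $r\!\begin{bmatrix} A \\ C_1 \\ C_2\end{bmatrix}$; moreover $A - B_1X_1C_1 - B_2X_2C_2$ is a submatrix of the two mixed block matrices in \eqref{110}, whose ranks collapse to the remaining two quantities once the $B_iX_iC_i$ are removed by row and column operations.

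The substantive step is achievability, namely producing a single pair $(X_1,X_2)$ at which the rank attains the minimum. Equivalently, one must show that the min--max interchange $\max_{X_2}\min\{f(X_2),g(X_2)\} = \min\{\max_{X_2}f,\ \max_{X_2}g\}$ holds in the favorable direction for the two rank functions $f(X_2)=r[\,A_2, \, B_1\,]$ and $g(X_2)=r\!\begin{bmatrix} A_2 \\ C_1\end{bmatrix}$. I expect this to be the main obstacle: it requires choosing $X_2$ so that both $f$ and $g$ are forced up to the governing minimum simultaneously, and then selecting $X_1$ compatibly. I would resolve it by a constructive normal-form argument, reducing $B_1,B_2,C_1,C_2$ to block forms by nonsingular transformations and filling in the free blocks of $X_1$ and $X_2$ explicitly. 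For the minimum \eqref{111}, I would iterate the single-variable minimal formula instead: the inner minimization over $X_1$ yields the sum $r[\,A_2, \, B_1\,] + r\!\begin{bmatrix} A_2 \\ C_1\end{bmatrix} - r\!\begin{bmatrix} A_2 & B_1 \\ C_1 & 0\end{bmatrix}$, three ranks each linear in $X_2$. These cannot be minimized independently because of the shared variable and the minus sign, so the crux here is a simultaneous-minimization lemma guaranteeing a single $X_2$ that minimizes the whole combination; expanding the three block ranks through Lemma \ref{TN41} and collecting terms would then collapse the expression into the six-block formula \eqref{111}.
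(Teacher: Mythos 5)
Your proposal cannot be checked against an in-paper argument, because the paper does not prove Lemma~\ref{TN43} at all: it is quoted as a known result from \cite{T1} and used as a black box. Judged on its own merits, your plan is half right. For the maximum \eqref{110}, the upper bound is correctly established both by your iterated use of the one-variable formula together with $\max_{X_2}\min\le\min\max_{X_2}$, and by your direct block-operation argument (each of the four block matrices dominates the rank of $A-B_1X_1C_1-B_2X_2C_2$). The missing piece---a single pair $(X_1,X_2)$ attaining the bound---is a genuine gap but a fillable one: over ${\mathbb C}$ the rank of a matrix whose entries are polynomial in the entries of $X_2$ attains its maximum on a nonempty Zariski-open set, so the two functions $f(X_2)=r[\,A_2,\,B_1\,]$ and $g(X_2)=r\!\begin{bmatrix} A_2 \\ C_1\end{bmatrix}$ (with $A_2=A-B_2X_2C_2$) are simultaneously maximal for generic $X_2$, which forces the max--min interchange; one then picks $X_1$ by the one-variable result. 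As written, however, you only promise a ``constructive normal-form argument'' and never give it, so even this half is an assertion rather than a proof.

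The minimum \eqref{111} is where the proposal genuinely fails. After the inner minimization over $X_1$ you must evaluate
\[
\min_{X_2}\left[\, r[\,A_2,\,B_1\,] + r\!\begin{bmatrix} A_2 \\ C_1\end{bmatrix} - r\!\begin{bmatrix} A_2 & B_1 \\ C_1 & 0\end{bmatrix}\right]\!,
\]
a signed combination of three rank functions all driven by the same $X_2$. Term-wise optimization is invalid (the two positive terms must be made small while the negative term is made large, by one and the same $X_2$), genericity arguments are useless for minima, and the ``simultaneous-minimization lemma'' you invoke is not an available tool---it is, in substance, the theorem being proved. Indeed no such principle can hold in the generality you need: the paper's concluding section (citing \cite{Tian:2000}) notes that no analytical minimal-rank formula is available for linear matrix functions with three or more variable terms, whereas a general lemma for minimizing signed combinations of interdependent rank functions would produce one by exactly the iteration you describe. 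Expanding the three ranks via Lemma~\ref{TN41} does not untangle the shared variable either. So the decisive step of the minimum half is not merely unproven in your proposal; the route you sketch for it has no prospect of working as stated, and a proof must instead confront the two-variable problem directly, as is done in \cite{T1}.
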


We next establish exact expansion formulas for calculating the maximum and minimum ranks of the four matrix pencils with respect to the choice of $\lambda$, $A^{-}$, and $B^{-}$, and use them to derive a variety of simple and interesting properties of the four matrix pencils from the rank formulas.

\begin{lemma} [\cite{BG,CM,RM}] \label{TN42}
Let $A \in {\mathbb C}^{m \times n}.$ Then the general expressions of $A^{-},$ $AA^{-},$ and $A^{-}A$ can be written as
\begin{align}
& A^{-} = A^{\dag} + F_{A}U + VE_{A},  \ \  AA^{-} = AA^{\dag} + AVE_{A}, \ \  A^{-}A = A^{\dag}A + F_{A}UA,
\label{112}
\end{align}
where $U,\, V \in {\mathbb C}^{n \times m}$ are arbitrary$.$
\end{lemma}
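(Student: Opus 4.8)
The plan is to prove the three displayed expressions in two stages: first I would pin down the general form of an arbitrary $\{1\}$-inverse $X = A^{-}$, and then substitute that form into the products $AA^{-}$ and $A^{-}A$ and simplify. The only tools needed are the Penrose equations \eqref{11} together with the two consequences $AF_A = A - AA^{\dag}A = 0$ and $E_AA = A - AA^{\dag}A = 0$, both of which come from equation (i); these annihilation identities are what make every substitution collapse.

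For the sufficiency direction I would verify directly that any $X = A^{\dag} + F_AU + VE_A$ with $U,V \in \mathbb{C}^{n\times m}$ is a $\{1\}$-inverse. Expanding and using the annihilation identities gives
\begin{align*}
AXA = AA^{\dag}A + (AF_A)UA + AV(E_AA) = A + 0 + 0 = A,
\end{align*}
so indeed $X \in \{A^{-}\}$.

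The necessity direction is where the real content lies, and I expect it to be the main obstacle. Starting from an arbitrary $X$ with $AXA = A$, I would resolve it through the orthogonal-projector decompositions of the identity $I_n = A^{\dag}A + F_A$ and $I_m = AA^{\dag} + E_A$, writing
\begin{align*}
X = (A^{\dag}A + F_A)X(AA^{\dag} + E_A) = A^{\dag}AXAA^{\dag} + A^{\dag}AXE_A + F_AXAA^{\dag} + F_AXE_A.
\end{align*}
The crucial reduction is that $A^{\dag}AXAA^{\dag} = A^{\dag}(AXA)A^{\dag} = A^{\dag}AA^{\dag} = A^{\dag}$, using the hypothesis $AXA = A$ and Penrose equation (ii). Choosing $U = X$ and $V = A^{\dag}AX$, one then checks that $F_AU + VE_A = F_AX(AA^{\dag}+E_A) + A^{\dag}AXE_A$ reproduces exactly the three remaining terms, yielding $X = A^{\dag} + F_AU + VE_A$. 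The delicate point to get right is that this exhibits every $\{1\}$-inverse in the required form and that the resulting $U,V$ genuinely range over all of $\mathbb{C}^{n\times m}$, so the parametrization is both complete and free.

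Once the form of $A^{-}$ is established, the last step is routine. Substituting $A^{-} = A^{\dag} + F_AU + VE_A$ and invoking $AF_A = 0$ and $E_AA = 0$ gives
\begin{align*}
AA^{-} = AA^{\dag} + (AF_A)U + AVE_A = AA^{\dag} + AVE_A, \qquad A^{-}A = A^{\dag}A + F_AUA + V(E_AA) = A^{\dag}A + F_AUA,
\end{align*}
which are precisely the two remaining identities in \eqref{112}. Thus the entire lemma rests on the necessity argument; the verifications for $AA^{-}$ and $A^{-}A$ are immediate corollaries of the parametrization of $A^{-}$.
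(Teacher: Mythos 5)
Your proof is correct. Note that the paper itself gives no proof of this lemma: it is quoted as a known result from the standard references \cite{BG,CM,RM} on generalized inverses, so there is no in-paper argument to compare against. Your derivation --- verifying sufficiency via the annihilation identities $AF_A = 0$ and $E_AA = 0$, and proving necessity by resolving an arbitrary solution $X$ of $AXA = A$ through $I_n = A^{\dag}A + F_A$ and $I_m = AA^{\dag} + E_A$, with the key reduction $A^{\dag}AXAA^{\dag} = A^{\dag}(AXA)A^{\dag} = A^{\dag}$ and the explicit choice $U = X$, $V = A^{\dag}AX$ --- is exactly the textbook argument (particular solution plus general solution of the homogeneous equation $AYA = 0$) that those references give, and the two product formulas then follow by substitution just as you state.
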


\begin{theorem} \label{TN44}
Let $A \in {\mathbb C}^{m \times n}$  and $B \in {\mathbb C}^{m \times p}$ be given$.$
\begin{enumerate}
\item[{\rm (a)}] If $\lambda \neq 0, -1, -2,$  then the following two formulas hold
\begin{align}
& \max_{A^{-},B^{-}}\!\!r(\, \lambda I_m + AA^{-} + BB^{-}\,) =  m,
 \label{z110}
\\
& \min_{A^{-},B^{-}}\!\!r(\,\lambda I_m + AA^{-} + BB^{-}\,) = \max\{ \, m + r(A) - r[\, A, \, B \,], \ \
 m + r(B) - r[\, A, \, B \,] \,\}.
\label{z111}
\end{align}
In particular$,$ the following results hold$.$
\begin{enumerate}
\item[{\rm (i)}] There always exist $A^{-}$ and  $B^{-}$ such that $\lambda I_m + AA^{-} + BB^{-}$ is nonsingular$.$

\item[{\rm (ii)}] $\lambda I_m + AA^{-} + BB^{-}$ is  nonsingular for all $A^{-}$ and $B^{-}$ $\Leftrightarrow$
The rank of $\lambda I_m + AA^{-} + BB^{-}$ is invariant for all $A^{-}$ and $B^{-}$ $\Leftrightarrow$
 $r[\, A, \, B \,] = r(A) = r(B)$ $\Leftrightarrow$ $\R(A) = \R(B).$

\item[{\rm (iii)}] There do not exist $A^{-}$ and  $B^{-}$ such that $\lambda I_m + AA^{-} + BB^{-} =0.$
\end{enumerate}

\item[{\rm (b)}] The following two formulas hold
\begin{align}
& \max_{A^{-},B^{-}}\!\!r(\,AA^{-} + BB^{-}\,) =  r[\, A, \, B \,],
 \label{z112}
\\
& \min_{A^{-},B^{-}}\!\!r(\,AA^{-} + BB^{-}\,) = \max\{ \, r(A), \ \  r(B) \,\}.
\label{z113}
\end{align}
In particular$,$ the following results hold$.$
\begin{enumerate}
\item[{\rm (i)}] There exist $A^{-}$ and  $B^{-}$ such that $AA^{-} + BB^{-}$ is nonsingular $\Leftrightarrow$ $r[\, A, \, B \,] =m.$

\item[{\rm (ii)}] $AA^{-} + BB^{-}$ is nonsingular for all $A^{-}$ and $B^{-}$ $\Leftrightarrow$  $r(A) =m$ or $r(B) =m.$

\item[{\rm (iii)}] There exist $A^{-}$ and  $B^{-}$ such that $AA^{-} + BB^{-} = 0$ $\Leftrightarrow$ $AA^{-} + BB^{-} =0$ for all $A^{-}$ and $B^{-}$ $\Leftrightarrow$ $[\, A, \, B \,] = 0.$

\item[{\rm (iv)}] The rank of $AA^{-} + BB^{-}$ is invariant for all $A^{-}$ and $B^{-}$ $\Leftrightarrow$  $\R(A) \supseteq \R(B)$ or $\R(A) \subseteq \R(B).$
\end{enumerate}

\item[{\rm (c)}] The following two formulas hold
\begin{align}
& \max_{A^{-},B^{-}}\!\!r(\, -I_m + AA^{-} + BB^{-}\,) =  m - |r(A) - r(B)|,
 \label{z114}
\\
& \min_{A^{-},B^{-}}\!\!r(\,-I_m + AA^{-} + BB^{-}\,) = m + r(A) + r(B) - 2r[\, A, \, B \,].
\label{z115}
\end{align}
In particular$,$ the following results hold$.$
\begin{enumerate}
\item[{\rm (i)}]  There exist $A^{-}$ and $B^{-}$ such that $- I_m + AA^{-} + BB^{-}$ is nonsingular $\Leftrightarrow$
$r(A) = r(B).$

\item[{\rm (ii)}] $ - I_m + AA^{-} + BB^{-}$ is nonsingular for all $A^{-}$ and $B^{-}$ $\Leftrightarrow$
$\R(A) =\R(B).$

\item[{\rm (iii)}] There exist $A^{-}$ and  $B^{-}$ such that $AA^{-} + BB^{-} = I_m$ $\Leftrightarrow$  $r[\, A, \, B \,] = r(A) + r(B) =m.$

\item[{\rm (iv)}] The rank of $- I_m + AA^{-} + BB^{-}$ is invariant for all $A^{-}$ and $B^{-}$ $\Leftrightarrow$ $\R(A) \supseteq \R(B)$ or $\R(A) \subseteq \R(B).$
\end{enumerate}

\item[{\rm (d)}] The following two formulas hold
\begin{align}
& \max_{A^{-},B^{-}}\!\!r(\, -2I_m + AA^{-} + BB^{-}\,) =  m + r[\, A, \, B \,] - r(A) - r(B),
 \label{z116}
\\
& \min_{A^{-},B^{-}}\!\!r(\,-2I_m + AA^{-} + BB^{-}\,) =  \max\{ \, m - r(A), \ \  m - r(B) \,\}.
\label{z117}
\end{align}
In particular$,$ the following results hold$.$
\begin{enumerate}
\item[{\rm (i)}]  There exist $A^{-}$ and $B^{-}$ such that $- 2I_m + AA^{-} + BB^{-}$ is nonsingular $\Leftrightarrow$
$\R(A) \cap \R(B) =\{0\}.$

\item[{\rm (ii)}] $- 2I_m + AA^{-} + BB^{-}$ is nonsingular for all $A^{-}$ and $B^{-}$ $\Leftrightarrow$
 $A =0$ or $B =0.$

\item[{\rm (iii)}] There exist $A^{-}$ and $B^{-}$ such that $AA^{-} + BB^{-} = 2I_m$
$\Leftrightarrow$ $AA^{-} + BB^{-} =2I_m$ for all $A^{-}$ and $B^{-}$ $\Leftrightarrow$ $r(A) = r(B) =m.$

\item[{\rm (iv)}] The rank of $- 2I_m AA^{-} + BB^{-}$ is invariant for all $A^{-}$ and $B^{-}$ $\Leftrightarrow$ $\R(A) \supseteq \R(B)$ or $\R(A) \subseteq \R(B).$
\end{enumerate}
\end{enumerate}
\end{theorem}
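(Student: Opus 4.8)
The plan is to turn all four rank problems into a single application of Lemma~\ref{TN43}. By Lemma~\ref{TN42}, as $A^{-}$ and $B^{-}$ range over all $\{1\}$-inverses one has $AA^{-} = AA^{\dag} + AVE_{A}$ and $BB^{-} = BB^{\dag} + BWE_{B}$ with $V\in{\mathbb C}^{n\times m}$ and $W\in{\mathbb C}^{p\times m}$ arbitrary, and these two sets of products are independent; hence the maximum and minimum over $(A^{-},B^{-})$ coincide with those over $(V,W)$. Writing $G = \lambda I_m + AA^{\dag} + BB^{\dag}$, this gives
\[
\lambda I_m + AA^{-} + BB^{-} = G - A(-V)E_{A} - B(-W)E_{B},
\]
which is exactly the two-term function $G - B_1X_1C_1 - B_2X_2C_2$ of Lemma~\ref{TN43} with $B_1 = A$, $C_1 = E_{A}$, $B_2 = B$, $C_2 = E_{B}$ and free $X_1 = -V$, $X_2 = -W$. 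Thus the required maxima and minima equal the values in \eqref{110} and \eqref{111}, and the whole theorem reduces to evaluating the eight block matrices occurring there for this particular $G$.

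The computations rest on $AA^{\dag} = I_m - E_{A}$, $BB^{\dag} = I_m - E_{B}$, $E_{A}A = 0$, and $E_{B}B = 0$. First I would use column operations along the $A$- and $B$-blocks to replace $G$ by $\lambda I_m$, and row operations along the $E_A$- and $E_B$-blocks to replace $G$ by $(\lambda+2)I_m$ (using $AA^{\dag}+E_A = BB^{\dag}+E_B = I_m$). Whenever a scalar pivot $\lambda$, $\lambda+1$, or $\lambda+2$ is nonzero one finishes by a Schur complement, the residual terms being evaluated through Lemma~\ref{TN41}: $r(E_{A}B) = r[A,B] - r(A)$, $r(E_{B}A) = r[A,B] - r(B)$, and $r\begin{bmatrix} E_A \\ E_B \end{bmatrix} = m - r(A) - r(B) + r[A,B] = m - \dim[\R(A)\cap\R(B)]$. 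Carrying this out, the four block matrices
\[
r[G,A,B], \quad r\!\begin{bmatrix} G \\ E_A \\ E_B \end{bmatrix}, \quad r\!\begin{bmatrix} G & A \\ E_B & 0 \end{bmatrix}, \quad r\!\begin{bmatrix} G & B \\ E_A & 0 \end{bmatrix}
\]
carry pivots $\lambda$, $\lambda+2$, $\lambda+1$, $\lambda+1$ respectively, which is precisely why the answer branches at $\lambda\in\{0,-1,-2\}$; meanwhile the four remaining blocks needed in \eqref{111} simplify, for \emph{every} $\lambda$, to $m + r[A,B] - r(B)$ (those built on the $E_B$-row) and $m + r[A,B] - r(A)$ (those built on the $E_A$-row).

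With these values the four cases are pure substitution. For $\lambda\neq 0,-1,-2$ all pivots survive, $r[G,A,B] = r\begin{bmatrix} G \\ E_A \\ E_B \end{bmatrix} = m$, and \eqref{110}--\eqref{111} give \eqref{z110} and $\min = m - r[A,B] + \max\{r(A),r(B)\}$, i.e.\ \eqref{z111}. For $\lambda = 0$ the first block drops to $r[A,B]$, yielding \eqref{z112}--\eqref{z113}; for $\lambda = -1$ the two mixed blocks take the special values $m + r(A) - r(B)$ and $m + r(B) - r(A)$, yielding \eqref{z114}--\eqref{z115}; and for $\lambda = -2$ the second block drops to $m - r(A) - r(B) + r[A,B]$, yielding \eqref{z116}--\eqref{z117}. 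The particular assertions (i)--(iv) in each part are then read off by specialization: a nonsingular realization exists iff the maximum rank is $m$, a zero realization exists iff the minimum rank is $0$, the realization is always nonsingular iff the minimum rank is $m$, and the rank is invariant iff maximum equals minimum; each resulting rank (in)equality is rephrased through $\dim[\R(A)\cap\R(B)] = r(A)+r(B)-r[A,B]$.

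The main obstacle is not any single step but the careful treatment of the degenerate pivots at $\lambda = 0,-1,-2$: there the Schur-complement reduction is unavailable, and each affected block instead collapses to one with a vanishing pivot block whose rank is the sum of the ranks of two disjointly-supported submatrices (e.g.\ $r\begin{bmatrix} 0 & A \\ E_B & 0 \end{bmatrix} = r(A) + r(E_B) = m + r(A) - r(B)$). I would need to verify that these degenerate evaluations are consistent with, and substitute correctly into, the generic minimum-rank formula \eqref{111}. The rest is the bookkeeping of the eight block ranks and the translation of the resulting conditions into the range statements in (i)--(iv).
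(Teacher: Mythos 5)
Your proposal follows essentially the same route as the paper's own proof: represent $AA^{-}$ and $BB^{-}$ via Lemma \ref{TN42}, apply the two-variable max/min rank formulas \eqref{110}--\eqref{111} of Lemma \ref{TN43}, reduce the eight block matrices to forms with scalar pivots $\lambda$, $\lambda+1$, $\lambda+2$ (which is exactly where the paper's formulas \eqref{z119}--\eqref{z120} branch at $\lambda\in\{0,-1,-2\}$), substitute the four values of $\lambda$, and read off (i)--(iv) by equating the max/min ranks to $m$ or $0$ via Lemma \ref{TN41}. Your block evaluations (including the degenerate-pivot cases and the constant values $m+r[A,B]-r(A)$, $m+r[A,B]-r(B)$ of the remaining blocks) are correct, so the argument goes through as in the paper.
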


\begin{proof}
By \eqref{112},
\begin{align}
&\lambda I_m + AA^{-} + BB^{-}  = \lambda I_m + AA^{\dag} + BB^{\dag} + AV_1E_{A} + BV_2E_{B},
 \label{z118}
\end{align}
where $V_1\in {\mathbb C}^{n \times m}$ and $V_2\in {\mathbb C}^{p\times m}$ are arbitrary$.$ Applying \eqref{110} and \eqref{111} to
\eqref{z118} and simplifying gives
\begin{align}
&  \max_{V_1, \, V_2} r(\, \lambda I_m + AA^{\dag} + BB^{\dag} + AV_1E_{A} + BV_2E_{B} \,) \nb
 \\
& = \min\!\left\{ r[\, \lambda I_m + AA^{\dag} + BB^{\dag}, \, A, \, B \,], \
 r\!\begin{bmatrix} \lambda I_m + AA^{\dag} + BB^{\dag}  \\ E_{A} \\ E_{B} \end{bmatrix}\!,
\right. \nb
  \\
&  \ \ \ \ \ \  \ \ \ \  \left.  r\!\begin{bmatrix}  \lambda I_m + AA^{\dag} + BB^{\dag}
& A\\ E_{B}   & 0 \end{bmatrix}\!, \
r\!\begin{bmatrix} \lambda I_m + AA^{\dag} + BB^{\dag} & B \\ E_{A}  & 0 \end{bmatrix} \right\} \nb
\\
& = \min\!\left\{ r[\, \lambda I_m, \, A, \, B \,], \ \
 r\!\begin{bmatrix} (\lambda +2)I_m   \\ E_{A} \\ E_{B} \end{bmatrix}\!, \ \   r\!\begin{bmatrix}  (\lambda +1)I_m & A \\ E_{B} & 0 \end{bmatrix}\!, \ \
r\!\begin{bmatrix} (\lambda +1)I_m  & B \\ E_{A}  & 0 \end{bmatrix} \right\},
\label{z119}
\end{align}
and
\begin{align}
& \min_{V_1, \, V_2}r(\, \lambda I_m + AA^{\dag} + BB^{\dag} + AV_1E_{A} + BV_2E_{B} \,) \nb
\\
& = r\!\begin{bmatrix}  \lambda I_m + AA^{\dag} + BB^{\dag}  \\ E_{A} \\ E_{B} \end{bmatrix} + r[ \, \lambda I_m + AA^{\dag} + BB^{\dag}, \, A, \,  B \,] \nb
 \\
& \ \  + \ \max\left\{\! r\!\begin{bmatrix}\lambda I_m + AA^{\dag} + BB^{\dag} & A\\ E_{B} & 0\end{bmatrix} - r\!\begin{bmatrix} \lambda I_m + AA^{\dag} + BB^{\dag}
& A & B  \\ E_{B}   & 0 & 0 \end{bmatrix}
-r\!\begin{bmatrix} \lambda I_m + AA^{\dag} + BB^{\dag}
& A \\ E_{A}   & 0 \\ E_{B} & 0 \end{bmatrix},  \right. \nb
\\
& \ \ \ \ \ \ \ \ \ \ \ \ \ \ \  \left.  r\!\begin{bmatrix}  \lambda I_m + AA^{\dag} + BB^{\dag}
& B \\ E_{A} & 0 \end{bmatrix}  -r\!\begin{bmatrix} \lambda I_m + AA^{\dag} + BB^{\dag}
& A & B  \\ E_{A}   & 0 & 0 \end{bmatrix} - r\!\begin{bmatrix} \lambda I_m + AA^{\dag} + BB^{\dag}
& B \\ E_{A}   & 0 \\ E_{B} & 0  \end{bmatrix} \!\right\} \nb
\\
&  =  r\!\begin{bmatrix} (\lambda +2)I_m   \\ E_{A} \\ E_{B} \end{bmatrix} + r[\, \lambda I_m, \, A, \, B \,] + \ \max\left\{\! r\!\begin{bmatrix}  (\lambda +1)I_m & A \\ E_{B} & 0 \end{bmatrix} - r\!\begin{bmatrix} \lambda I_m & A & B  \\ E_{B}   & 0 & 0 \end{bmatrix}
-r\!\begin{bmatrix} (\lambda +1)I_m
& A \\ E_{A}   & 0 \\ E_{B} & 0 \end{bmatrix},  \right. \nb
\\
& \ \ \ \ \ \ \ \ \ \ \ \ \ \ \  \left.
r\!\begin{bmatrix} (\lambda +1)I_m  & B \\ E_{A}  & 0 \end{bmatrix}  -r\!\begin{bmatrix} \lambda I_m
& A & B  \\ E_{A}   & 0 & 0 \end{bmatrix} - r\!\begin{bmatrix} (\lambda +1)I_m
& B \\ E_{A}   & 0 \\ E_{B} & 0  \end{bmatrix} \!\right\}.
\label{z120}
\end{align}
Substituting different values of $\lambda$ into the above formulas \eqref{z119} and \eqref{z120}
and simplifying yield the rank formulas in \eqref{z110}--\eqref{z117}, respectively.  The facts in (a)--(d) follow from setting the rank formulas  in \eqref{z110}--\eqref{z117}  equal to $m$ and 0 and applying Lemma \ref{TN41}, respectively.
\end{proof}

\begin{theorem} \label{TN45}
Let $A \in {\mathbb C}^{m \times n}$  and $B \in {\mathbb C}^{m \times p}$ be given$.$
\begin{enumerate}
\item[{\rm (a)}] If $\lambda \neq 1, 0, -1,$  then the following two formulas hold
\begin{align}
& \max_{A^{-},B^{-}}\!\!r(\,\lambda I_m + AA^{-} - BB^{-}\,) =m,
\label{z121}
\\
& \min_{A^{-},B^{-}}\!\!r(\,\lambda I_m + AA^{-} - BB^{-}\,) =  \max\{ \, m + r(A) - r[\, A, \, B \,], \ \ m + r(B) - r[\, A, \, B \,] \}.
\label{z122}
\end{align}
In particular$,$ the following results hold$.$
\begin{enumerate}
\item[{\rm (i)}] There always exist $A^{-}$ and  $B^{-}$ such that $\lambda I_m + AA^{-} - BB^{-}$ is nonsingular$.$

\item[{\rm (ii)}]  $\lambda I_m + AA^{-} - BB^{-}$ is  nonsingular for all $A^{-}$ and $B^{-}$ $\Leftrightarrow$
the rank of $\lambda I_m + AA^{-} - BB^{-}$ is invariant for all $A^{-}$ and $B^{-}$ $\Leftrightarrow$
 $r[\, A, \, B \,] = r(A) = r(B)$ $\Leftrightarrow$ $\R(A) = \R(B).$

\item[{\rm (iii)}] There do not exist $A^{-}$ and $B^{-}$ such that $\lambda I_m + AA^{-} - BB^{-} =0.$
\end{enumerate}

\item[{\rm (b)}] The following two formulas hold
\begin{align}
& \max_{A^{-},B^{-}}\!\!r(\,I_m + AA^{-} - BB^{-}\,) =\min \{m, \ \  m + r(A) - r(B) \},
\label{z123}
\\
& \min_{A^{-},B^{-}}\!\!r(\,I_m + AA^{-} - BB^{-}\,) =  m + r(A) - r[\, A, \, B \,].
\label{z124}
\end{align}
In particular$,$ the following results hold$.$
\begin{enumerate}
\item[{\rm (i)}] There exist $A^{-}$ and  $B^{-}$ such that $I_m + AA^{-} - BB^{-}$ is nonsingular
$\Leftrightarrow$ $r(A) = r(B).$

\item[{\rm (ii)}]  $I_m + AA^{-} - BB^{-}$ is  nonsingular for all $A^{-}$ and $B^{-}$ $\Leftrightarrow$
 $\Leftrightarrow$ $\R(A) \supseteq \R(B).$

\item[{\rm (iii)}] There exist $A^{-}$ and  $B^{-}$ such that $BB^{-} - AA^{-} =I_m$ $\Leftrightarrow$
$BB^{-} - AA^{-} =I_m$ holds for all $A^{-}$ and $B^{-}$  $\Leftrightarrow$ $A = 0$ and $r(B) =m.$

\item[{\rm (iv)}] The rank of $I_m + AA^{-} - BB^{-}$ is invariant for all $A^{-}$ and $B^{-}$
$\Leftrightarrow$ $\R(A) \supseteq \R(B)$ or $\R(A) \subseteq \R(B).$

\end{enumerate}

\item[{\rm (c)}] The following two formulas hold
\begin{align}
& \max_{A^{-},B^{-}}\!\!r(\,AA^{-} - BB^{-}\,) =   \min\{ \,  r[\, A, \, B \,], \ \  m + r[\, A, \, B \,] -  r(A) - r(B) \},
\label{z125}
\\
& \min_{A^{-},B^{-}}r(\,AA^{-} - BB^{-}\,) =  \max\{ \,  r[\, A, \, B \,] - r(A), \ \ r[\, A, \, B \,] - r(B) \}.
\label{z126}
\end{align}
In particular$,$ the following results hold$.$
\begin{enumerate}
\item[{\rm (i)}] There exist $A^{-}$ and $B^{-}$ such that $AA^{-} - BB^{-}$ is nonsingular
$\Leftrightarrow$   $r[\, A, \, B \,]  = r(A) + r(B)  =m.$

\item[{\rm (ii)}] $AA^{-} - BB^{-}$ is nonsingular for all $A^{-}$ and $B^{-}$ $\Leftrightarrow$ $r(A) =m$ and $B = 0,$
or $A = 0$ and $r(B) =m.$

\item[{\rm (iii)}] There exist $A^{-}$ and  $B^{-}$ such that $AA^{-} = BB^{-}$ $\Leftrightarrow$ $\R(A) =\R(B).$

\item[{\rm (iv)}]  $AA^{-} = BB^{-}$ holds for all $A^{-}$ and $B^{-}$ $\Leftrightarrow$ $[\, A, \, B \,] = 0$ or $r[\, A, \, B \,] = r(A) + r(B) -m.$

\item[{\rm (v)}] The rank of $AA^{-} - BB^{-}$ is invariant for all $A^{-}$ and $B^{-}$
$\Leftrightarrow$ $A=0$ or $B=0$  $r(A)=m$ or $r(B)=m.$
\end{enumerate}
\end{enumerate}
\end{theorem}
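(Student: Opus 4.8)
The plan is to mirror the proof of Theorem~\ref{TN44}, simply replacing $BB^{-}$ by $-BB^{-}$ and tracking how the exceptional values of $\lambda$ migrate from $\{0,-1,-2\}$ to $\{1,0,-1\}$. First I would invoke Lemma~\ref{TN42} to write $AA^{-} = AA^{\dag} + AV_1E_{A}$ and $BB^{-} = BB^{\dag} + BV_2E_{B}$ with $V_1 \in {\mathbb C}^{n\times m}$ and $V_2 \in {\mathbb C}^{p\times m}$ arbitrary, so that
\[
\lambda I_m + AA^{-} - BB^{-} = \lambda I_m + AA^{\dag} - BB^{\dag} + AV_1E_{A} - BV_2E_{B}
\]
is a linear matrix-valued function in the two independent variables $V_1$ and $V_2$. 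I would then apply the extremal rank formulas \eqref{110} and \eqref{111} of Lemma~\ref{TN43} with $D = \lambda I_m + AA^{\dag} - BB^{\dag}$, the blocks $A$ and $B$ playing the roles of $B_1$ and $B_2$, and $E_{A}$, $E_{B}$ the roles of $C_1$, $C_2$ (the signs being irrelevant for ranks).

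The heart of the argument is the simplification of the resulting block matrices using the orthogonal-projector identities $AA^{\dag} = I_m - E_{A}$, $E_{A}A = 0$, $\R(AA^{\dag}) \subseteq \R(A)$, and their $B$-analogues. Column operations let the $A$- and $B$-columns absorb $AA^{\dag}$ and $BB^{\dag}$, turning $[\,D,\,A,\,B\,]$ into $[\,\lambda I_m,\,A,\,B\,]$; adjoining the $E_{A}$- and $E_{B}$-rows and writing $D = \lambda I_m - E_{A} + E_{B}$ collapses the vertical block to $\begin{bmatrix}\lambda I_m \\ E_{A} \\ E_{B}\end{bmatrix}$. The same bookkeeping reduces the two mixed blocks to $\begin{bmatrix}(\lambda-1)I_m & A \\ E_{B} & 0\end{bmatrix}$ and $\begin{bmatrix}(\lambda+1)I_m & B \\ E_{A} & 0\end{bmatrix}$. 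With these forms in hand, formula \eqref{110} yields $\max r = \min\{\,r[\,\lambda I_m,\,A,\,B\,],\ r\begin{bmatrix}\lambda I_m \\ E_{A} \\ E_{B}\end{bmatrix},\ m + r(E_{B}A),\ m + r(E_{A}B)\,\}$ whenever the corner multiples of $I_m$ are invertible, and Lemma~\ref{TN41} supplies $r(E_{B}A) = r[\,A,\,B\,]-r(B)$ and $r(E_{A}B)=r[\,A,\,B\,]-r(A)$.

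For $\lambda \neq 1,0,-1$ all four quantities are at least $m$ while the first two equal $m$, giving \eqref{z121}; substituting the same reductions into \eqref{111} produces \eqref{z122}, establishing part~(a). For part~(b) I would set $\lambda = 1$, where the corner $(\lambda-1)I_m$ vanishes so that the third block becomes the anti-triangular $\begin{bmatrix}0 & A \\ E_{B} & 0\end{bmatrix}$ of rank $r(A) + r(E_{B}) = m + r(A) - r(B)$, which is exactly the binding term producing \eqref{z123}. For part~(c) I would set $\lambda = 0$, where $[\,\lambda I_m,\,A,\,B\,]$ degenerates to $[\,A,\,B\,]$ and the vertical block becomes $\begin{bmatrix}E_{A} \\ E_{B}\end{bmatrix}$ of rank $m - \dim(\R(A)\cap\R(B)) = m + r[\,A,\,B\,] - r(A) - r(B)$, giving \eqref{z125}. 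The particular facts (i)--(v) in each part then follow mechanically: nonsingularity statements come from equating the maximum or minimum rank to $m$, the existence-of-prescribed-value statements from equating the minimum (or maximum) rank to the target, and the range characterizations from the standard equivalences $r[\,A,\,B\,]=r(A) \Leftrightarrow \R(B)\subseteq\R(A)$ and $\dim(\R(A)\cap\R(B)) = r(A)+r(B)-r[\,A,\,B\,]$.

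The step I expect to be the main obstacle is the evaluation of \eqref{111} at the three exceptional values $\lambda \in \{1,0,-1\}$. There the Schur-complement shortcut used for generic $\lambda$ fails, because a corner block $\lambda I_m$ or $(\lambda\pm1)I_m$ becomes singular or zero, so each of the five $3\times 2$ and $2\times 3$ block ranks in \eqref{111} must be recomputed by hand and reconciled against the generic formula. This is where essentially all the case-by-case bookkeeping resides, and where the asymmetry between the $+BB^{-}$ and $-BB^{-}$ settings forces the closed forms to differ from those of Theorem~\ref{TN44}.
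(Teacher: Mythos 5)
Your proposal is correct and follows essentially the same route as the paper: parametrize $AA^{-}$ and $BB^{-}$ via Lemma \ref{TN42}, apply the extremal-rank formulas \eqref{110} and \eqref{111} of Lemma \ref{TN43} to the resulting two-variable linear matrix function, reduce the block matrices by the projector identities to $[\,\lambda I_m, A, B\,]$, $\begin{bmatrix}\lambda I_m \\ E_A \\ E_B\end{bmatrix}$, $\begin{bmatrix}(\lambda-1)I_m & A \\ E_B & 0\end{bmatrix}$, $\begin{bmatrix}(\lambda+1)I_m & B \\ E_A & 0\end{bmatrix}$, and then substitute the particular values of $\lambda$. Your simplified closed forms ($m+r(E_BA)$, $r(E_AB)=r[\,A,B\,]-r(A)$, the rank of the anti-triangular block at $\lambda=1$, and the intersection-dimension computation at $\lambda=0$) agree with what the paper leaves implicit in its final "substituting and simplifying" step.
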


\begin{proof}
By \eqref{112},
\begin{align}
\lambda I_m + AA^{-} - BB^{-}  = \lambda I_m + AA^{\dag} - BB^{\dag} + AV_1E_{A} - BV_2E_{B},
 \label{27}
\end{align}
where $V_1\in {\mathbb C}^{n \times m}$ and $V_2\in {\mathbb C}^{p\times m}$ are arbitrary$.$
Applying \eqref{110} and \eqref{111} to \eqref{27} and simplifying gives
\begin{align*}
&  \max_{V_1, \, V_2}\!r(\, \lambda I_m + AA^{\dag} - BB^{\dag} + AV_1E_{A} - BV_2E_{B}, \,) \nb
 \\
& = \min\!\left\{ r[\, \lambda I_m + AA^{\dag} - BB^{\dag}, \, A, \, B \,], \
 r\!\begin{bmatrix} \lambda I_m + AA^{\dag} - BB^{\dag}  \\ E_{A} \\ E_{B} \end{bmatrix}\!,
\right. \nb
  \\
&  \ \ \ \ \ \  \ \ \ \  \left.  r\!\begin{bmatrix}  \lambda I_m + AA^{\dag} - BB^{\dag}
& A\\ E_{B}   & 0 \end{bmatrix}\!, \
r\!\begin{bmatrix} \lambda I_m + AA^{\dag} - BB^{\dag} & B \\ E_{A}  & 0 \end{bmatrix} \right\} \nb
\\
& = \min\!\left\{ r[\, \lambda I_m, \, A, \, B \,], \ \
 r\!\begin{bmatrix} \lambda I_m   \\ E_{A} \\ E_{B} \end{bmatrix}\!, \ \   r\!\begin{bmatrix}  (\lambda -1)I_m & A \\ E_{B} & 0 \end{bmatrix}\!, \ \
r\!\begin{bmatrix} (\lambda +1)I_m  & B \\ E_{A}  & 0 \end{bmatrix} \right\},
\end{align*}
and
\begin{align*}
& \min_{V_1, \, V_2}\!r(\, \lambda I_m + AA^{\dag} - BB^{\dag} + AV_1E_{A} - BV_2E_{B}, \,) \nb
\\
& = r\!\begin{bmatrix}  \lambda I_m + AA^{\dag} - BB^{\dag}  \\ E_{A} \\ E_{B} \end{bmatrix} + r[ \, \lambda I_m + AA^{\dag} - BB^{\dag}, \, A, \,  B \,] \nb
 \\
& \ \  + \ \max\left\{\! r\!\begin{bmatrix}\lambda I_m + AA^{\dag} - BB^{\dag} & A\\ E_{B} & 0\end{bmatrix} - r\!\begin{bmatrix} \lambda I_m + AA^{\dag} - BB^{\dag}
& A & B  \\ E_{B}   & 0 & 0 \end{bmatrix}
-r\!\begin{bmatrix} \lambda I_m + AA^{\dag} - BB^{\dag}
& A \\ E_{A}   & 0 \\ E_{B} & 0 \end{bmatrix},  \right. \nb
\\
& \ \ \ \ \ \ \ \ \ \ \ \ \ \ \ \ \ \ \ \  \left.  r\!\begin{bmatrix}  \lambda I_m + AA^{\dag} - BB^{\dag}
& B \\ E_{A} & 0 \end{bmatrix}  -r\!\begin{bmatrix} \lambda I_m + AA^{\dag} - BB^{\dag}
& A & B  \\ E_{A}   & 0 & 0 \end{bmatrix} - r\!\begin{bmatrix} \lambda I_m + AA^{\dag} - BB^{\dag}
& B \\ E_{A}   & 0 \\ E_{B} & 0  \end{bmatrix} \!\right\} \nb
\end{align*}
\begin{align*}
&  =  r\!\begin{bmatrix} \lambda I_m   \\ E_{A} \\ E_{B} \end{bmatrix} + r[\, \lambda I_m, \, A, \, B \,] + \ \max\left\{\! r\!\begin{bmatrix}  (\lambda -1)I_m & A \\ E_{B} & 0 \end{bmatrix} - r\!\begin{bmatrix} \lambda I_m & A & B  \\ E_{B}   & 0 & 0 \end{bmatrix}
-r\!\begin{bmatrix} (\lambda -1)I_m & A \\ E_{A}   & 0 \\ E_{B} & 0 \end{bmatrix},  \right. \nb
\\
& \ \ \ \ \ \ \ \ \ \ \ \ \ \ \  \ \ \ \ \ \ \ \ \ \ \ \ \ \ \ \ \ \ \ \ \ \ \ \ \ \ \ \ \ \ \ \ \ \  \left.
r\!\begin{bmatrix} (\lambda +1)I_m  & B \\ E_{A}  & 0 \end{bmatrix}  -r\!\begin{bmatrix} \lambda I_m
& A & B  \\ E_{A}   & 0 & 0 \end{bmatrix} - r\!\begin{bmatrix} (\lambda +1)I_m
& B \\ E_{A}   & 0 \\ E_{B} & 0  \end{bmatrix} \!\right\}.
\end{align*}
Substituting different values of $\lambda$ into the above formulas and simplifying yield the rank formulas required.
\end{proof}

\begin{theorem} \label{TN46}
Let $A \in {\mathbb C}^{m \times n}$  and $C \in {\mathbb C}^{p \times m}$ be given$.$
\begin{enumerate}
\item[{\rm (a)}] If $\lambda \neq 0, -1, -2,$  then the following two formulas hold
\begin{align*}
& \max_{A^{-},C^{-}}\!\!r(\, \lambda I_m + AA^{-} + C^{-}C\,) =  m,
\\
& \min_{A^{-},C^{-}}\!\!r(\,\lambda I_m + AA^{-} + C^{-}C\,) = \max\{ \, m - r(CA), \ \ r(A) + r(C) - r(CA) \,\}.
\end{align*}
In particular$,$ the following results hold$.$
\begin{enumerate}
\item[{\rm (i)}] There always exist $A^{-}$ and  $C^{-}$ such that $\lambda I_m + AA^{-} + C^{-}C$ is nonsingular$.$

\item[{\rm (ii)}]  $\lambda I_m + AA^{-} + C^{-}C$ is  nonsingular for all $A^{-}$ and $C^{-}$ $\Leftrightarrow$
the rank of $\lambda I_m + AA^{-} + C^{-}C$ is invariant for all $A^{-}$ and $C^{-}$ $\Leftrightarrow$
 $r[\, A, \, C \,] = r(A) = r(C)$ $\Leftrightarrow$ $CA =0$ and $r(A) + r(C) = m.$

\item[{\rm (iii)}] There do not exist $A^{-}$ and $C^{-}$ such that $\lambda I_m + AA^{-} + C^{-}C =0.$
\end{enumerate}

\item[{\rm (b)}] The following two formulas hold
\begin{align*}
& \max_{A^{-},C^{-}}\!\!r(\, AA^{-} + C^{-}C\,) =  \min\{ \, m, \ \ r(A) + r(C) \,\},
\\
& \min_{A^{-},C^{-}}\!\!r(\,AA^{-} + C^{-}C\,) = r(A) + r(C) - r(CA).
\end{align*}
In particular$,$ the following results hold$.$
\begin{enumerate}
\item[{\rm (i)}] There exist $A^{-}$ and  $C^{-}$ such that $AA^{-} + C^{-}C$ is nonsingular $\Leftrightarrow$
 $r(A) + r(C) \geq m.$

\item[{\rm (ii)}]  $AA^{-} + C^{-}C$ is  nonsingular for all $A^{-}$ and $C^{-}$ $\Leftrightarrow$
$r(CA) = r(A) + r(C) - m.$

\item[{\rm (iii)}] There exist $A^{-}$ and  $C^{-}$ such that $AA^{-} + C^{-}C =0$ $\Leftrightarrow$
$AA^{-} + C^{-}C =0$ for all $A^{-}$ and $C^{-}$ $\Leftrightarrow$  $A = 0$ and $C= 0.$

\item[{\rm (iv)}] The rank of $AA^{-} + C^{-}C$ is invariant for all $A^{-}$ and $C^{-}$ $\Leftrightarrow$ $CA =0$ or $r(CA) = r(A) + r(C) - m.$
\end{enumerate}

\item[{\rm (c)}] The following two formulas hold
\begin{align*}
& \max_{A^{-},C^{-}}\!\!r(\, -I_m + AA^{-} + C^{-}C\,) =  \min\{ \, m + r(CA) - r(A) , \ \  m + r(CA) - r(C) \,\},
\\
& \min_{A^{-},C^{-}}\!\!r(\,-I_m + AA^{-} + C^{-}C\,) = \max\{ \, m + r(CA) - r(A) - r(C), \ \ r(CA) \,\}.
\end{align*}
In particular$,$ the following results hold$.$
\begin{enumerate}
\item[{\rm (i)}] There exist $A^{-}$ and  $C^{-}$ such that $-I_m + AA^{-} + C^{-}C$ is nonsingular $\Leftrightarrow$
 $r(CA) = r(A) = r(C).$

\item[{\rm (ii)}]  $-I_m + AA^{-} + C^{-}C$ is  nonsingular for all $A^{-}$ and $C^{-}$ $\Leftrightarrow$
$A = 0$ and $C = 0,$  or $r(A) = r(C) = m.$

\item[{\rm (iii)}] There exist $A^{-}$ and  $C^{-}$ such that $AA^{-} + C^{-}C = I_m$ $\Leftrightarrow$  $CA =0$ and $r(A) + r(C) = m.$

\item[{\rm (iv)}]  $AA^{-} + C^{-}C = I_m$ cannot hold for all $A^{-}$ and $C^{-}.$

\item[{\rm (v)}] The rank of $-I_m + AA^{-} + C^{-}C$ is invariant for all $A^{-}$ and $C^{-}$ $\Leftrightarrow$ $A =0$ or $C =0$ or $r(A)= m$ or $r(C) =m.$
\end{enumerate}

\item[{\rm (d)}] The following two formulas hold
\begin{align*}
& \max_{A^{-},C^{-}}\!\!r(\, -2I_m + AA^{-} + C^{-}C\,) =  \min\{ \, m, \ \  2m - r(A) - r(C) \,\},
\\
& \min_{A^{-},C^{-}}\!\!r(\,-2I_m + AA^{-} + C^{-}C\,) = m - r(CA).
\end{align*}
In particular$,$ the following results hold$.$
\begin{enumerate}
\item[{\rm (i)}] There exist $A^{-}$ and  $C^{-}$ such that $-2I_m + AA^{-} + C^{-}C$ is nonsingular $\Leftrightarrow$
 $r(A) + r(C) \leq m.$

\item[{\rm (ii)}]  $-2I_m + AA^{-} + C^{-}C$ is  nonsingular for all $A^{-}$ and $C^{-}$ $\Leftrightarrow$
$CA = 0.$

\item[{\rm (iii)}] There exist $A^{-}$ and  $C^{-}$ such that $AA^{-} + C^{-}C = 2I_m$ $\Leftrightarrow$
the rank of $-I_m + AA^{-} + C^{-}C$ is invariant for all $A^{-}$ and $C^{-}$ $\Leftrightarrow$
 $r(A) = r(C) = m.$

\item[{\rm (iv)}] The rank of $-2I_m + AA^{-} + C^{-}C$ is invariant for all $A^{-}$ and $C^{-}$ $\Leftrightarrow$ $CA =0$ or $r(CA) = r(A) + r(C) - m.$
\end{enumerate}
\end{enumerate}
\end{theorem}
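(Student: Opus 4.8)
The plan is to mirror the strategy carried out in the proofs of Theorems \ref{TN44} and \ref{TN45}, now adapted to the mixed pencil $\lambda I_m + AA^- + C^-C$ in which one idempotent is built from a column projector and the other from a row projector. First I would invoke Lemma \ref{TN42} to parametrize the two idempotents by arbitrary matrices: $AA^- = AA^{\dag} + AV_1E_{A}$, and, writing $F_{C} = I_m - C^{\dag}C$, the analogous $C^-C = C^{\dag}C + F_{C}U_2C$. Substituting these into the pencil gives
\begin{align*}
\lambda I_m + AA^- + C^-C = \lambda I_m + AA^{\dag} + C^{\dag}C + AV_1E_{A} + F_{C}U_2C,
\end{align*}
which is exactly a two-variable linear matrix function of the form handled by Lemma \ref{TN43}, with constant term $D = \lambda I_m + AA^{\dag} + C^{\dag}C$, coefficients $B_1 = A$, $C_1 = E_{A}$ on one side and $B_2 = F_{C}$, $C_2 = C$ on the other.

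Next I would apply the max/min formulas \eqref{110} and \eqref{111} and reduce each block matrix that appears, using the orthogonal-projector identities $AA^{\dag}A = A$, $CC^{\dag}C = C$, $E_{A}A = 0$, $CF_{C} = 0$, and $C^{\dag}C + F_{C} = I_m$. For instance, subtracting $A^{\dag}$ times the $A$-column from the first block-column cancels $AA^{\dag}$ inside $D$, and a subsequent row operation with $C^{\dag}$ cancels $C^{\dag}C$, which yields
\begin{align*}
r\!\begin{bmatrix} D & A \\ C & 0 \end{bmatrix} = r\!\begin{bmatrix} \lambda I_m & A \\ C & 0 \end{bmatrix} = m + r(CA) \qquad (\lambda \neq 0)
\end{align*}
after a Schur-complement step. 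This is precisely where the cross-term $r(CA)$ enters, in place of the $r[A,B]$ that governed Theorems \ref{TN44}--\ref{TN45}. Likewise, adding the $F_{C}$-column (or the $E_{A}$-row) collapses $C^{\dag}C$ (or $AA^{\dag}$) against the identity and turns the remaining block matrices into expressions assembled solely from $\lambda I_m$, $A$, $C$, $E_{A}$, $F_{C}$, which Lemma \ref{TN41} evaluates in terms of $m$, $r(A)$, $r(C)$, and $r(CA)$.

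Having obtained the general maximum and minimum ranks as explicit functions of $\lambda$, I would specialize to $\lambda \notin \{0,-1,-2\}$ for part (a) and to $\lambda = 0, -1, -2$ for parts (b), (c), (d); each substitution collapses the inner minimum (over four block ranks) and the inner maximum (over the terms of \eqref{111}) to the stated closed forms. The particular assertions in each part then follow mechanically: setting the minimum rank equal to $m$ gives nonsingularity for some choice of inverses, setting the maximum equal to $m$ gives nonsingularity for all inverses, equating maximum and minimum gives rank-invariance, and setting the minimum to $0$ gives the vanishing of the pencil; each resulting scalar rank condition is translated into a range or equality statement through $r[A,C] = r(A) + r(E_{A}C)$ of Lemma \ref{TN41} and the elementary bound $r(CA) \le \min\{r(A),r(C)\}$.

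The main obstacle will be the bookkeeping in the block-matrix reductions, which is more delicate here than in Theorems \ref{TN44}--\ref{TN45} because the two idempotents are of different one-sided type: $AA^-$ carries the column projector $AA^{\dag}$ and is driven by the pair $(A,E_{A})$, whereas $C^-C$ carries the row projector $C^{\dag}C$ and is driven by $(F_{C},C)$. Keeping straight which elementary operation cancels $AA^{\dag}$ as opposed to $C^{\dag}C$, and correctly separating the degenerate values $\lambda = 0,-1,-2$ (where a scalar factor such as $(\lambda+1)I_m$ or $(\lambda+2)I_m$ drops rank) from the generic case, is where the care is required; in particular the several block terms of the minimum-rank formula \eqref{111} must each be simplified before the outer maximum can be resolved.
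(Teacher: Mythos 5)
Your proposal follows exactly the paper's own route: parametrize $AA^{-}$ and $C^{-}C$ via Lemma \ref{TN42} as $AA^{\dag}+AV_1E_{A}$ and $C^{\dag}C+F_{C}V_2C$, apply the two-variable max/min rank formulas \eqref{110}--\eqref{111} of Lemma \ref{TN43} with $(B_1,C_1)=(A,E_A)$ and $(B_2,C_2)=(F_C,C)$, simplify the block matrices by projector identities (producing the cross-term $r(CA)$ via the Schur complement of $\begin{bmatrix}\lambda I_m & A\\ C & 0\end{bmatrix}$), and then substitute $\lambda\notin\{0,-1,-2\}$ and $\lambda=0,-1,-2$ to obtain parts (a)--(d) and their consequences. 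This matches the paper's proof in both structure and detail, so no further comparison is needed.
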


\begin{proof}
By \eqref{112},
\begin{align}
&\lambda I_m + AA^{-} + C^{-}C  = \lambda I_m + AA^{\dag} + C^{\dag}C + AV_1E_{A} + F_{C}V_2C,
 \label{38}
\end{align}
where $V_1\in {\mathbb C}^{n \times m}$ and $V_2\in {\mathbb C}^{m\times p}$ are arbitrary$.$
Applying \eqref{110} and \eqref{111} to \eqref{38} and simplifying gives
\begin{align*}
&  \max_{V_1, \, V_2}\!r(\, \lambda I_m + AA^{\dag} + C^{\dag}C + AV_1E_{A} + F_{C}V_2C \,) \nb
 \\
& = \min\!\left\{ r[\, \lambda I_m + AA^{\dag} + C^{\dag}C, \, A, \, F_{C} \,], \
 r\!\begin{bmatrix} \lambda I_m + AA^{\dag} + C^{\dag}C  \\ E_{A} \\ C \end{bmatrix}\!,  \right. \nb
\\
&  \ \ \ \ \ \  \ \ \ \left. r\!\begin{bmatrix}  \lambda I_m + AA^{\dag} + C^{\dag}C
& A\\ C   & 0 \end{bmatrix}\!, \
r\!\begin{bmatrix} \lambda I_m + AA^{\dag} + C^{\dag}C & F_{C} \\ E_{A}  & 0 \end{bmatrix} \right\} \nb
\\
& = \min\!\left\{ r[\, (\lambda +1)I_m, \, A, \, F_{C} \,], \ \
 r\!\begin{bmatrix} (\lambda + 1)I_m
 \\ E_{A} \\ C \end{bmatrix}, \ \  r\!\begin{bmatrix} \lambda I_m & A \\ C   & 0 \end{bmatrix}, \ \   r\!\begin{bmatrix} (\lambda +2)I_m
& F_{C} \\ E_{A}   & 0 \end{bmatrix} \right\}\!,
\end{align*}
and
\begin{align*}
& \min_{V_1, \, V_2}\!r(\, \lambda I_m + AA^{\dag} + C^{\dag}C + AV_1E_{A} + F_{C}V_2C \,) = r\!\begin{bmatrix}  \lambda I_m
+ AA^{\dag} + C^{\dag}C  \\ E_{A} \\ C \end{bmatrix} + r[ \, \lambda I_m + AA^{\dag} + C^{\dag}C, \, A, \,  F_{C} \,] \nb
 \\
& \ \  + \ \max\left\{\! r\!\begin{bmatrix}\lambda I_m + AA^{\dag} + C^{\dag}C & A\\ C & 0\end{bmatrix} - r\!\begin{bmatrix} \lambda I_m + AA^{\dag} + C^{\dag}C
& A & F_{C}  \\ C   & 0 & 0 \end{bmatrix}
-r\!\begin{bmatrix} \lambda I_m + AA^{\dag} + C^{\dag}C
& A \\ E_{A}   & 0 \\ C & 0 \end{bmatrix}\!,  \right. \nb
\\
& \ \ \ \ \ \ \ \ \ \ \ \ \ \ \  \left.  r\!\begin{bmatrix}  \lambda I_m + AA^{\dag} + C^{\dag}C
& F_{C} \\ E_{A} & 0 \end{bmatrix}  -r\!\begin{bmatrix} \lambda I_m + AA^{\dag} + C^{\dag}C
& A & F_{C}  \\ E_{A}   & 0 & 0 \end{bmatrix} - r\!\begin{bmatrix} \lambda I_m + AA^{\dag} + C^{\dag}C
& F_{C} \\ E_{A}   & 0 \\ C & 0  \end{bmatrix} \!\right\} \nb
\\
&  = r\!\begin{bmatrix} (\lambda + 1)I_m
 \\ E_{A} \\ C \end{bmatrix} + r[\, (\lambda +1)I_m, \, A, \, F_{C} \,]
  +  \max\left\{\! r\!\begin{bmatrix}\lambda I_m & A\\ C & 0\end{bmatrix} - r\!\begin{bmatrix} \lambda I_m
& A & F_{C}  \\ C   & 0 & 0 \end{bmatrix}
-r\!\begin{bmatrix} \lambda I_m
& A \\ E_{A}   & 0 \\ C & 0 \end{bmatrix}\!,  \right. \nb
\\
& \ \ \ \ \ \ \ \ \ \ \ \ \ \ \  \ \ \ \ \ \  \ \ \ \ \ \ \ \ \ \ \ \ \left.  r\!\begin{bmatrix}  (\lambda +2)I_m
& F_{C} \\ E_{A} & 0 \end{bmatrix}  -r\!\begin{bmatrix} (\lambda +1)I_m
& A & F_{C}  \\ E_{A}   & 0 & 0 \end{bmatrix} - r\!\begin{bmatrix} (\lambda +1)I_m
& F_{C} \\ E_{A}   & 0 \\ C & 0  \end{bmatrix} \!\right\}\!.
\end{align*}
Substituting different values of $\lambda$ into the above formulas and simplifying yield the rank formulas required.
\end{proof}

\begin{theorem} \label{T7}
Let $A \in {\mathbb C}^{m \times n}$  and $C \in {\mathbb C}^{p \times m}$ be given$.$
\begin{enumerate}
\item[{\rm (a)}] If $\lambda \neq 1, 0, -1,$  then the following two formulas hold
\begin{align*}
& \max_{A^{-},C^{-}}\!\!r(\, \lambda I_m + AA^{-} - C^{-}C\,) =  m,
\\
& \min_{A^{-},C^{-}}\!\!r(\,\lambda I_m + AA^{-} - C^{-}C\,) = \max\{ \, m - r(CA), \ \ r(A) + r(C) - r(CA) \,\}.
\end{align*}
In particular$,$ the following results hold$.$
\begin{enumerate}
\item[{\rm (i)}] There always exist $A^{-}$ and $C^{-}$ such that $\lambda I_m + AA^{-} - C^{-}C$ is nonsingular$.$

\item[{\rm (ii)}] $\lambda I_m + AA^{-} - C^{-}C$ is nonsingular for all $A^{-}$ and $C^{-}$ $\Leftrightarrow$
the rank of $\lambda I_m + AA^{-} - C^{-}C$ is invariant for all $A^{-}$ and $C^{-}$ $\Leftrightarrow$
 $CA =0$ and $r(A) + r(C) = m.$

\item[{\rm (iii)}] There do not exist $A^{-}$ and $C^{-}$ such that $\lambda I_m + AA^{-} - C^{-}C =0.$
\end{enumerate}

\item[{\rm (b)}] The following two formulas hold
\begin{align*}
& \max_{A^{-},C^{-}}\!\!r(\, AA^{-} - C^{-}C\,) =  m - |r(A) + r(C) - m|,
\\
& \min_{A^{-},C^{-}}\!\!r(\,AA^{-} - C^{-}C\,) = r(A) + r(C) - 2r(CA).
\end{align*}
In particular$,$ the following results hold$.$
\begin{enumerate}
\item[{\rm (i)}] There exist $A^{-}$ and  $C^{-}$ such that $AA^{-} - C^{-}C$ is nonsingular $\Leftrightarrow$
 $r(A) + r(C) =m.$

\item[{\rm (ii)}]  $AA^{-} - C^{-}C$ is  nonsingular for all $A^{-}$ and $C^{-}$ $\Leftrightarrow$
$CA = 0$ and $r(A) + r(C) =m.$

\item[{\rm (iii)}] There exist $A^{-}$ and  $C^{-}$ such that $AA^{-} = C^{-}C$ $\Leftrightarrow$  $r(CA) = r(A) = r(C).$

\item[{\rm (iv)}]   $AA^{-} = C^{-}C$ holds for all $A^{-}$ and $C^{-}$ $\Leftrightarrow$  $r(A) = r(C) =m.$

\item[{\rm (v)}] The rank of $AA^{-} - C^{-}C$ is invariant for all $A^{-}$ and $C^{-}$ $\Leftrightarrow$ $CA = 0$ or $r(A) + r(C) =m.$
\end{enumerate}

\item[{\rm (c)}] The following two formulas hold
\begin{align*}
& \max_{A^{-},C^{-}}\!\!r(\, -I_m + AA^{-} - C^{-}C\,) = m - r(A) + r(CA),
\\
& \min_{A^{-},C^{-}}\!\!r(\,-I_m + AA^{-} - C^{-}C\,) = \max\{ \, m - r(A), \ \  r(C) \,\}.
\end{align*}
In particular$,$ the following results hold$.$
\begin{enumerate}
\item[{\rm (i)}] There exist $A^{-}$ and  $C^{-}$ such that $-I_m + AA^{-} - C^{-}C$ is nonsingular $\Leftrightarrow$
 $r(A) = r(CA).$

\item[{\rm (ii)}]  $-I_m + AA^{-} - C^{-}C$ is  nonsingular for all $A^{-}$ and $C^{-}$ $\Leftrightarrow$
$A =0$  or $r(C) =m.$

\item[{\rm (iii)}] There exist $A^{-}$ and  $C^{-}$ such that $AA^{-} - C^{-}C =I_m$ $\Leftrightarrow$
 $AA^{-} - C^{-}C = I_m$ holds for all $A^{-}$ and $C^{-}$ $\Leftrightarrow$
$r(A) = m$ and $C =0.$

\item[{\rm (iv)}] The rank of $-I_m + AA^{-} - C^{-}C$ is invariant for all $A^{-}$ and $C^{-}$ $\Leftrightarrow$ $CA = 0$ or $r(CA) = r(A) + r(C) - m.$
\end{enumerate}
\end{enumerate}
\end{theorem}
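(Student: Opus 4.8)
The plan is to follow the same three-step scheme used in the proofs of Theorems \ref{TN44}--\ref{TN46}, now applied to the difference of the range projector $AA^{-}$ and the co-range projector $C^{-}C$. First I would invoke Lemma \ref{TN42} to replace the two generalized-inverse products by their general parametric forms $AA^{-} = AA^{\dag} + AV_1E_A$ and $C^{-}C = C^{\dag}C + F_CV_2C$, where $V_1 \in \mathbb{C}^{n\times m}$ and $V_2 \in \mathbb{C}^{m\times p}$ are free. Substituting these into the pencil gives
\[
\lambda I_m + AA^{-} - C^{-}C = (\lambda I_m + AA^{\dag} - C^{\dag}C) + AV_1E_A - F_CV_2C,
\]
which is a linear matrix-valued function of the two independent variables $V_1$ and $V_2$, exactly of the form treated in Lemma \ref{TN43}.

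Next I would apply the maximum and minimum formulas \eqref{110} and \eqref{111} with $N = \lambda I_m + AA^{\dag} - C^{\dag}C$, $B_1 = A$, $C_1 = E_A$, $B_2 = F_C$, $C_2 = C$ (the signs are immaterial since $V_1, V_2$ range over all matrices). This yields the maximum rank as the minimum of the four block ranks $r[N, A, F_C]$, $r\begin{bmatrix} N \\ E_A \\ C\end{bmatrix}$, $r\begin{bmatrix} N & A \\ C & 0\end{bmatrix}$, $r\begin{bmatrix} N & F_C \\ E_A & 0\end{bmatrix}$, and the minimum rank as the corresponding longer expression in \eqref{111}.

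The core of the work is step three: simplifying every block that carries the entry $N = \lambda I_m + AA^{\dag} - C^{\dag}C$. Here I would use the projector identities $E_AA = 0$, $CF_C = 0$, $AA^{\dag}A = A$, $C^{\dag}CF_C = 0$, the Hermitian idempotency of $AA^{\dag}$ and $C^{\dag}C$, together with Lemma \ref{TN41}. The mechanism is that a column block $A$ allows one to cancel $AA^{\dag}$ from $N$, a row block $C$ allows one to cancel $C^{\dag}C$, while an $F_C$ or $E_A$ block absorbs the complementary projector at the cost of shifting the scalar $\lambda$ by $\pm 1$; this is exactly how the shifted multiples $\lambda I_m$, $(\lambda\pm 1)I_m$, $(\lambda\pm 2)I_m$ emerge, just as in the proof of Theorem \ref{TN46}, but with the sign of the $C^{\dag}C$-contribution reversed. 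After these reductions each block collapses to a rank that is an affine function of $m$, $r(A)$, $r(C)$, and $r(CA)$ (the auxiliary quantities $r[A,F_C]$ and $r\begin{bmatrix}A\\C\end{bmatrix}$ themselves reducing through Lemma \ref{TN41}).

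Finally I would substitute the three relevant values of the scalar: the generic case for part (a), then $\lambda = 0$ for part (b) and $\lambda = -1$ for part (c), and collapse the resulting inner maxima and outer minima to the stated closed forms. The particular facts (i)--(v) in each part then follow by setting each rank formula equal to $m$ (for nonsingularity) or to the rank of the relevant constant matrix (for the pencil attaining a prescribed value), and translating the rank equalities into the range and rank conditions through the rank-expansion identities of Lemma \ref{TN41}. I expect the main obstacle to be the bookkeeping in step three: proving that the four-term outer minimum and the two-term inner maximum genuinely reduce to the compact expressions such as $m - |r(A) + r(C) - m|$ in part (b), which demands a careful case split on the relative sizes of $r(A)$, $r(C)$, $r(CA)$, and $m$; it is precisely this case analysis where the reversed sign on $C^{-}C$ distinguishes the present theorem from Theorem \ref{TN46}.
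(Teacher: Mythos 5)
Your proposal follows exactly the paper's own proof: the same parametrization $AA^{-} = AA^{\dag} + AV_1E_A$, $C^{-}C = C^{\dag}C + F_CV_2C$ from Lemma \ref{TN42}, the same application of the two-variable max/min rank formulas \eqref{110}--\eqref{111} with the block identification $B_1 = A$, $C_1 = E_A$, $B_2 = F_C$, $C_2 = C$, the same projector-based simplification producing the shifted blocks $(\lambda \pm 1)I_m$, and the same final substitution of particular values of $\lambda$. The approach and the key lemmas coincide with the paper's argument, so the plan is sound as written.
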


\begin{proof}
By \eqref{112},
\begin{align}
&\lambda I_m + AA^{-} - C^{-}C  = \lambda I_m + AA^{\dag} - C^{\dag}C + AV_1E_{A} - F_{C}V_2C,
 \label{47}
\end{align}
where $V_1\in {\mathbb C}^{n \times m}$ and $V_2\in {\mathbb C}^{m\times p}$ are arbitrary$.$
Applying \eqref{110} and \eqref{111} to \eqref{47} and simplifying gives
\begin{align*}
&  \max_{V_1, \, V_2}\!r(\, \lambda I_m + AA^{\dag} - C^{\dag}C + AV_1E_{A} - F_{C}V_2C \,) \nb
 \\
& = \min\!\left\{ r[\, \lambda I_m + AA^{\dag} - C^{\dag}C, \, A, \, F_{C} \,], \
 r\!\begin{bmatrix} \lambda I_m + AA^{\dag} - C^{\dag}C  \\ E_{A} \\ C \end{bmatrix}\!, \right. \nb
\\
& \ \ \ \  \ \ \ \ \ \  \left.  r\!\begin{bmatrix}  \lambda I_m + AA^{\dag} - C^{\dag}C
& A \\ C   & 0 \end{bmatrix}\!, \
r\!\begin{bmatrix} \lambda I_m + AA^{\dag} - C^{\dag}C & F_{C} \\ E_{A}  & 0 \end{bmatrix} \right\} \nb
\\
& = \min\!\left\{ r[\, (\lambda -1)I_m, \, A, \, F_{C} \,], \
 r\!\begin{bmatrix} (\lambda +1)I_m  \\ E_{A} \\ C \end{bmatrix}\!,  \ \  r\!\begin{bmatrix}  \lambda I_m
& A \\ C   & 0 \end{bmatrix}\!, \
r\!\begin{bmatrix} \lambda I_m & F_{C} \\ E_{A}  & 0 \end{bmatrix} \right\}\!,
\end{align*}
and
\begin{align*}
& \min_{V_1, \, V_2}\!r(\, \lambda I_m + AA^{\dag} - C^{\dag}C + AV_1E_{A} - F_{C}V_2C \,) = r\!\begin{bmatrix}  \lambda I_m + AA^{\dag} - C^{\dag}C  \\ E_{A} \\ C \end{bmatrix} + r[ \, \lambda I_m + AA^{\dag} - C^{\dag}C, \, A, \,  F_{C} \,] \nb
 \\
& \ \  + \ \max\left\{\! r\!\begin{bmatrix}\lambda I_m + AA^{\dag} - C^{\dag}C & A\\ C & 0\end{bmatrix} - r\!\begin{bmatrix} \lambda I_m + AA^{\dag} - C^{\dag}C
& A & F_{C}  \\ C   & 0 & 0 \end{bmatrix}
-r\!\begin{bmatrix} \lambda I_m + AA^{\dag} - C^{\dag}C
& A \\ E_{A}   & 0 \\ C & 0 \end{bmatrix}\!,  \right. \nb
\\
& \ \ \ \ \ \ \ \ \ \ \ \ \ \ \  \left.  r\!\begin{bmatrix}  \lambda I_m + AA^{\dag} - C^{\dag}C
& F_{C} \\ E_{A} & 0 \end{bmatrix}  -r\!\begin{bmatrix} \lambda I_m + AA^{\dag} - C^{\dag}C
& A & F_{C}  \\ E_{A}   & 0 & 0 \end{bmatrix} - r\!\begin{bmatrix} \lambda I_m + AA^{\dag} - C^{\dag}C
& F_{C} \\ E_{A}   & 0 \\ C & 0  \end{bmatrix} \!\right\} \nb
\\
& = r\!\begin{bmatrix}  (\lambda +1)I_m  \\ E_{A} \\ C \end{bmatrix} + r[ \, (\lambda -1)I_m, \, A, \,  F_{C} \,] \nb
 \\
& \ \  + \ \max\left\{\! r\!\begin{bmatrix}\lambda I_m  & A\\ C & 0\end{bmatrix} - r\!\begin{bmatrix} \lambda I_m
& A & F_{C}  \\ C   & 0 & 0 \end{bmatrix}
-r\!\begin{bmatrix} \lambda I_m & A \\ E_{A}   & 0 \\ C & 0 \end{bmatrix}, \ r\!\begin{bmatrix}  \lambda I_m
& F_{C} \\ E_{A} & 0 \end{bmatrix}  -r\!\begin{bmatrix} \lambda I_m
& A & F_{C}  \\ E_{A}   & 0 & 0 \end{bmatrix} - r\!\begin{bmatrix} \lambda I_m
& F_{C} \\ E_{A}   & 0 \\ C & 0  \end{bmatrix} \!\right\}\!.
\end{align*}
Substituting different values of $\lambda$ into the above formulas and simplifying yield the rank formulas required.
\end{proof}

Setting $C= A$ in Theorems \ref{TK35} and \ref{TK36} leads to the following results.

\begin{corollary} \label{T8}
Let $A \in {\mathbb C}^{m \times m}$ be given$.$
\begin{enumerate}
\item[{\rm (a)}] If $\lambda \neq 0, -1, -2,$  then the following two formulas hold
\begin{align*}
& \max_{A^{-}}r(\, \lambda I_m + AA^{-} + A^{-}A\,) =  m,
\\
& \min_{A^{-}}r(\,\lambda I_m + AA^{-} + A^{-}A\,) = \max\{ \, m - r(A^2), \ \ 2r(A) - r(A^2) \,\}.
\end{align*}
In particular$,$ the following results hold$.$
\begin{enumerate}
\item[{\rm (i)}] There always exists $A^{-}$ such that $\lambda I_m + AA^{-} + A^{-}A$ is nonsingular$.$

\item[{\rm (ii)}]  $\lambda I_m + AA^{-} + A^{-}A$ is  nonsingular for all $A^{-}$ $\Leftrightarrow$
the rank of $\lambda I_m + AA^{-} + A^{-}A$ is invariant for all $A^{-}$ $\Leftrightarrow$ $A^2 =0$ and $2r(A) = m.$

\item[{\rm (iii)}] There does not exist $A^{-}$ such that $\lambda I_m + AA^{-} + A^{-}A =0.$
\end{enumerate}

\item[{\rm (b)}] The following two formulas hold
\begin{align*}
& \max_{A^{-}}r(\, AA^{-} + A^{-}A\,) =  \min\{ \, m, \ \ 2r(A) \,\},
\\
& \min_{A^{-}}r(\,AA^{-} + A^{-}A\,) = 2r(A) - r(A^2).
\end{align*}
In particular$,$ the following results hold$.$
\begin{enumerate}
\item[{\rm (i)}] There exists $A^{-}$ such that $AA^{-} + A^{-}A$ is nonsingular $\Leftrightarrow$
 $2r(A) \geq m.$

\item[{\rm (ii)}]  $AA^{-} + A^{-}A$ is  nonsingular for all $A^{-}$ $\Leftrightarrow$
$r(A^2) = 2r(A) - m.$

\item[{\rm (iii)}] There exists $A^{-}$ such that $AA^{-} + A^{-}A =0$ $\Leftrightarrow$
$AA^{-} + A^{-}A =0$ for all $A^{-}$ $\Leftrightarrow$  $A = 0.$

\item[{\rm (iv)}] The rank of $AA^{-} + A^{-}A$ is invariant for all $A^{-}$ $\Leftrightarrow$ $A^2 =0$ or $r(A^2) = 2r(A) - m.$
\end{enumerate}

\item[{\rm (c)}] The following two formulas hold
\begin{align*}
& \max_{A^{-}}r(\, -I_m + AA^{-} + A^{-}A\,) =  \min\{ \, m + r(A^2) - r(A) , \ \  m + r(A^2) - r(A) \,\},
\\
& \min_{A^{-}}r(\,-I_m + AA^{-} + A^{-}A\,) = \max\{ \, m + r(A^2) - 2r(A), \ \ r(A^2) \,\}.
\end{align*}
In particular$,$ the following results hold$.$
\begin{enumerate}
\item[{\rm (i)}] There exists $A^{-}$ such that $-I_m + AA^{-} + A^{-}A$ is nonsingular $\Leftrightarrow$ $r(A^2) = r(A).$

\item[{\rm (ii)}]  $-I_m + AA^{-} + A^{-}A$ is  nonsingular for all $A^{-}$ $\Leftrightarrow$
$A = 0$ or $r(A) = m.$

\item[{\rm (iii)}] There exists $A^{-}$ such that $AA^{-} + A^{-}A = I_m$ $\Leftrightarrow$
$A^2 =0$ and $2r(A) = m.$

\item[{\rm (iv)}]  $AA^{-} + A^{-}A = I_m$ cannot hold for all $A^{-}.$

\item[{\rm (v)}] The rank of $-I_m + AA^{-} + A^{-}A$ is invariant for all $A^{-}$ $\Leftrightarrow$ $A =0$ or $r(A) =m.$
\end{enumerate}

\item[{\rm (d)}] The following two formulas hold
\begin{align*}
& \max_{A^{-}}r(\, -2I_m + AA^{-} + A^{-}A\,) =  \min\{ \, m, \ \  2m - 2r(A) \,\},
\\
& \min_{A^{-}}r(\,-2I_m + AA^{-} + A^{-}A\,) = m - r(A^2).
\end{align*}
In particular$,$ the following results hold$.$
\begin{enumerate}
\item[{\rm (i)}] There exists $A^{-}$ such that $-2I_m + AA^{-} + A^{-}A$ is nonsingular $\Leftrightarrow$
 $2r(A) \leq m.$

\item[{\rm (ii)}]  $-2I_m + AA^{-} + A^{-}A$ is  nonsingular for all $A^{-}$ $\Leftrightarrow$
$A^2 = 0.$

\item[{\rm (iii)}] There exists $A^{-}$ such that $AA^{-} + A^{-}A = 2I_m$ $\Leftrightarrow$
the rank of $-I_m + AA^{-} + A^{-}A$ is invariant for all $A^{-}$ $\Leftrightarrow$
 $r(A) = m.$

\item[{\rm (iv)}] The rank of $-2I_m + AA^{-} + A^{-}A$ is invariant for all $A^{-}$ $\Leftrightarrow$ $A^2 =0$ or $r(A^2) = 2r(A) - m.$
\end{enumerate}
\end{enumerate}
\end{corollary}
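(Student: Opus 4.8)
The plan is to derive Corollary~\ref{T8} as the specialization $C = A$ of Theorem~\ref{TN46}, whose four parts treat $\lambda I_m + AA^{-} + C^{-}C$ for $\lambda \neq 0,-1,-2$ and for the three exceptional values $\lambda = 0,-1,-2$. The one genuine subtlety is that Theorem~\ref{TN46} optimizes over two \emph{independent} generalized inverses, one inside $AA^{-}$ and one inside $C^{-}C$, whereas here the \emph{same} $A^{-}$ appears in both $AA^{-}$ and $A^{-}A$. First I would remove this apparent coupling using the parametrization of Lemma~\ref{TN42}: writing $A^{-} = A^{\dag} + F_{A}U + VE_{A}$ with $U,V$ arbitrary and using $AF_{A} = 0$, $E_{A}A = 0$, one obtains
\begin{align}
AA^{-} + A^{-}A = AA^{\dag} + A^{\dag}A + AVE_{A} + F_{A}UA. \nonumber
\end{align}
Here the term $AVE_{A}$ coming from $AA^{-}$ and the term $F_{A}UA$ coming from $A^{-}A$ depend on the two \emph{disjoint} and \emph{free} parameter blocks $V$ and $U$. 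Hence the family of values of $AA^{-}+A^{-}A$ produced by a single varying $A^{-}$ is identical to the family produced by two independent inverses in Theorem~\ref{TN46} with $C=A$, so the two optimization problems coincide and every extremal formula carries over. Establishing this decoupling is, in my view, the conceptual heart of the argument.

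With the reduction in place, I would simply substitute $C = A$ into the eight extremal formulas of Theorem~\ref{TN46}, using the elementary identities $r(C) = r(A)$ and $CA = A^{2}$, so that $r(CA) = r(A^{2})$. Parts (a)--(d) then read off directly; for instance part~(a) gives maximum rank $m$ and minimum rank $\max\{m-r(A^{2}),\,2r(A)-r(A^{2})\}$, and in part~(c) the two coincident entries of the outer $\min$ collapse to the single value $m + r(A^{2}) - r(A)$. No computation beyond these substitutions is required for the rank formulas themselves.

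The step I expect to demand the most care is the derivation of the particular assertions (i)--(v) in each part. These cannot be obtained merely by substituting $C = A$ into the structural conditions of Theorem~\ref{TN46}, since several of those degenerate under the specialization (e.g. $r[A,\,A] = r(A)$ trivializes the condition $r[A,\,C] = r(A) = r(C)$); instead each assertion must be re-derived from the specialized minimum and maximum formulas. I would set the maximum equal to $m$ for the existence-of-a-nonsingular-$A^{-}$ statements, the minimum equal to $m$ for the nonsingular-for-all-$A^{-}$ statements, and the maximum equal to $0$ for the vanishing statements, reading invariance off as the case where minimum and maximum coincide. Throughout, the basic rank relations $0 \le r(A^{2}) \le r(A) \le m$ together with Sylvester's inequality $r(A^{2}) \ge 2r(A) - m$ control which bracket entry is active, and Lemma~\ref{TN41} converts the resulting numerical rank conditions into the stated conditions on $A^{2}$, $r(A)$, and the ranges of $A$.
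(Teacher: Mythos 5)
Your proposal is correct and follows the paper's own route: the paper's entire proof of Corollary \ref{T8} is the one-line remark that setting $C=A$ in the two-inverse theorems yields the result (the text mis-cites these as Theorems \ref{TK35} and \ref{TK36}; the intended sources are Theorems \ref{TN46} and \ref{T7}, and you correctly identified Theorem \ref{TN46} as the relevant one). Your decoupling step is precisely the justification the paper leaves implicit: since Lemma \ref{TN42} gives $AA^{-}=AA^{\dag}+AVE_{A}$ and $A^{-}A=A^{\dag}A+F_{A}UA$ with $U,V$ free and disjoint, a single varying $A^{-}$ sweeps out the same family of values as two independent inverses, which is also visible in the paper's proof of Theorem \ref{TN46}, where everything reduces to the two-parameter expression $\lambda I_m+AA^{\dag}+A^{\dag}A+AV_1E_{A}+F_{A}V_2A$ once $C=A$. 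The one place you genuinely depart from the paper is in the particular facts (i)--(v): the paper obtains them by substituting $C=A$ into the already-stated structural conditions of Theorem \ref{TN46}, whereas you re-derive them from the specialized extremal formulas; your way is the more reliable, and carrying it out will in fact expose a defect in the printed statement. For part (a)(ii), setting the minimum $\max\{m-r(A^2),\,2r(A)-r(A^2)\}$ equal to $m$ yields ``$A^2=0$ \emph{or} $r(A^2)=2r(A)-m$,'' not the printed conjunction ``$A^2=0$ and $2r(A)=m$'' (take $A=I_m$: the unique $A^{-}$ is $I_m$, so $\lambda I_m+AA^{-}+A^{-}A=(\lambda+2)I_m$ is nonsingular, yet $A^2\neq 0$). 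So your plan establishes the extremal rank formulas exactly as the paper does, and where your conclusions would differ from the printed fine print, the discrepancy is an error inherited by the corollary from Theorem \ref{TN46}(a)(ii), not a gap in your argument.
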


\begin{corollary} \label{T9}
Let $A \in {\mathbb C}^{m \times m}$ be given$.$
\begin{enumerate}
\item[{\rm (a)}] If $\lambda \neq 1, 0, -1,$  then the following two formulas hold
\begin{align*}
& \max_{A^{-}}r(\, \lambda I_m + AA^{-} - A^{-}A\,) =  m,
\\
& \min_{A^{-}}r(\,\lambda I_m + AA^{-} - A^{-}A\,) = \max\{ \, m - r(A^2), \ \ 2r(A) - r(A^2) \,\}.
\end{align*}
In particular$,$ the following results hold$.$
\begin{enumerate}
\item[{\rm (i)}] There always exists $A^{-}$ such that $\lambda I_m + AA^{-} - A^{-}A$ is nonsingular$.$

\item[{\rm (ii)}] $\lambda I_m + AA^{-} - A^{-}A$ is nonsingular for all $A^{-}$ $\Leftrightarrow$
the rank of $\lambda I_m + AA^{-} - A^{-}A$ is invariant for all $A^{-}$ $\Leftrightarrow$ $A^2 =0$ and $2r(A) = m.$

\item[{\rm (iii)}] There do not exist $A^{-}$ such that $\lambda I_m + AA^{-} - A^{-}A =0.$
\end{enumerate}

\item[{\rm (b)}] The following two formulas hold
\begin{align*}
& \max_{A^{-}}r(\, AA^{-} - A^{-}A\,) =  m - |2r(A) - m|,
\\
& \min_{A^{-}}r(\,AA^{-} - A^{-}A\,) = 2r(A) - 2r(A^2).
\end{align*}
In particular$,$ the following results hold$.$
\begin{enumerate}
\item[{\rm (i)}] There exists $A^{-}$ such that $AA^{-} - A^{-}A$ is nonsingular $\Leftrightarrow$
 $2r(A) =m.$

\item[{\rm (ii)}]  $AA^{-} - A^{-}A$ is  nonsingular for all $A^{-}$ $\Leftrightarrow$
$A^2 = 0$ and $2r(A) =m.$

\item[{\rm (iii)}] There exists $A^{-}$ such that $AA^{-} = A^{-}A$ $\Leftrightarrow$  $r(A^2) = r(A).$

\item[{\rm (iv)}]   $AA^{-} = A^{-}A$ holds for all $A^{-}$ $\Leftrightarrow$  $r(A) =m.$

\item[{\rm (v)}] The rank of $AA^{-} - A^{-}A$ is invariant for all $A^{-}$ $\Leftrightarrow$ $A = 0$ or $2r(A) =m.$
\end{enumerate}

\item[{\rm (c)}] The following two formulas hold
\begin{align*}
& \max_{A^{-}}r(\, -I_m + AA^{-} - A^{-}A\,) = m - r(A) + r(A^2),
\\
& \min_{A^{-}}r(\,-I_m + AA^{-} - A^{-}A\,) = \max\{ \, m - r(A), \ \  r(A) \,\}.
\end{align*}
In particular$,$ the following results hold$.$
\begin{enumerate}
\item[{\rm (i)}] There exists $A^{-}$ such that $-I_m + AA^{-} - A^{-}A$ is nonsingular $\Leftrightarrow$
 $r(A^2) = r(A).$

\item[{\rm (ii)}]  $-I_m + AA^{-} - A^{-}A$ is  nonsingular for all $A^{-}$ $\Leftrightarrow$
$A =0$  or $r(A) =m.$

\item[{\rm (iii)}] There exists $A^{-}$ such that $AA^{-} - A^{-}A =I_m$ $\Leftrightarrow$
 $AA^{-} - A^{-}A = I_m$ holds for all $A^{-}$ $\Leftrightarrow$
 $A =0$ and $r(A) = m.$

\item[{\rm (iv)}] The rank of $-I_m + AA^{-} - A^{-}A$ is invariant for all $A^{-}$ $\Leftrightarrow$ $A^2 = 0$ or $r(A^2) = 2r(A) - m.$
\end{enumerate}
\end{enumerate}
\end{corollary}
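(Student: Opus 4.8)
The plan is to obtain Corollary \ref{T9} as the diagonal specialization $C = A$ of Theorem \ref{T7}. The only point requiring genuine care is that in Theorem \ref{T7} the two products $AA^{-}$ and $C^{-}C$ are built from \emph{independent} generalized inverses $A^{-}$ and $C^{-}$, whereas in Corollary \ref{T9} the single symbol $A^{-}$ occurs in both $AA^{-}$ and $A^{-}A$. First I would invoke Lemma \ref{TN42}: writing $A^{-} = A^{\dag} + F_{A}U + VE_{A}$ with $U, V$ arbitrary, one has $AA^{-} = AA^{\dag} + AVE_{A}$ and $A^{-}A = A^{\dag}A + F_{A}UA$, since $AF_{A} = 0$ and $E_{A}A = 0$. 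Thus $AA^{-}$ is governed solely by $V$ and $A^{-}A$ solely by $U$, and these two parameters are free and mutually independent. Consequently, as $A^{-}$ ranges over all generalized inverses of $A$, the pair $(AA^{-}, \, A^{-}A)$ sweeps out exactly the same set as a pair formed from two independent inverses, so nothing is lost by reading the single-inverse optimization as the two-inverse optimization of Theorem \ref{T7}.

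With this identification, substituting $C = A$ into the generic expression appearing in the proof of Theorem \ref{T7}, namely $\lambda I_m + AA^{\dag} - C^{\dag}C + AV_1E_{A} - F_{C}V_2C$, yields $\lambda I_m + AA^{\dag} - A^{\dag}A + AV_1E_{A} - F_{A}V_2A$, which is precisely the general form of $\lambda I_m + AA^{-} - A^{-}A$ with $V_1 = V$ and $V_2 = U$. Hence the maximum and minimum ranks over the single $A^{-}$ coincide with those computed in Theorem \ref{T7}, and I would simply set $C = A$ in its closed-form answers, using $r(C) = r(A)$ and $r(CA) = r(A^2)$. The generic case $\lambda \neq 1, 0, -1$ then gives $\min = \max\{m - r(A^2), \, 2r(A) - r(A^2)\}$; the case $\lambda = 0$ gives $\max = m - |2r(A) - m|$ and $\min = 2r(A) - 2r(A^2)$; and the case $\lambda = -1$ gives $\max = m - r(A) + r(A^2)$ and $\min = \max\{m - r(A), \, r(A)\}$, matching parts (a)--(c).

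Finally, the ``in particular'' facts follow by the standard routine: a pencil of order $m$ is nonsingular for some (resp. all) choices of $A^{-}$ exactly when its maximum (resp. minimum) rank equals $m$, it vanishes for some (resp. all) choices exactly when its maximum (resp. minimum) rank equals $0$, and its rank is invariant precisely when the two extremal ranks agree. Equating the closed forms above to $m$ or $0$ and simplifying the resulting numerical rank identities with the aid of Lemma \ref{TN41} reproduces each listed equivalence; for instance, the existence of $A^{-}$ with $AA^{-} = A^{-}A$ is the condition $\min = 0$, i.e. $2r(A) - 2r(A^2) = 0$, i.e. $r(A^2) = r(A)$, which is exactly (b)(iii). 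I expect the main obstacle to be purely bookkeeping: tracking which of the quantities $r(A)$, $r(A^2)$, $m$ controls each threshold and translating the numerical rank equalities into the stated structural conditions such as $A^2 = 0$ or $2r(A) = m$. The one conceptual step is the independence of $U$ and $V$ established above, which is what legitimizes the passage from the two-inverse formulas of Theorem \ref{T7} to the single-inverse statement here.
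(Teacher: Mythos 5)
Your proposal is correct and takes essentially the same route as the paper, which obtains Corollary \ref{T9} precisely by specializing $C=A$ in Theorem \ref{T7} (the paper's pointer to Theorems \ref{TK35} and \ref{TK36} at that point is evidently a mislabeling). Your extra step---using Lemma \ref{TN42} to show that $AA^{-}=AA^{\dag}+AVE_{A}$ and $A^{-}A=A^{\dag}A+F_{A}UA$ involve independent free parameters $V$ and $U$, so a single $A^{-}$ realizes exactly the same set of pairs as two independent generalized inverses---is the justification the paper leaves implicit for reading the two-inverse extremal rank formulas as single-inverse ones.
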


\begin{corollary} \label{T10}
Let $A \in \mathbb C^{m \times n}, \, B \in \mathbb C^{m \times k},$ $C \in \mathbb C^{l \times n},$ and
$D \in \mathbb C^{l \times n}$  be given$,$ and denote $M = \begin{bmatrix} A & B \\ C & D  \end{bmatrix}$  and
$N =\begin{bmatrix} A & 0 \\ 0 & D  \end{bmatrix}.$  If $\lambda \neq 1, 0, -1,$  then the following two formulas hold
\begin{align}
& \max_{M^{-},N^{-}}\!\!r\!\left(\, \lambda I_{m+l} + MM^{-} - NN^{-} \right) =  m + l,
\label{361}
\\
& \min_{M^{-},N^{-}}\!\!r\!\left(\, \lambda I_{m+l} + MM^{-} - NN^{-} \right)  =  m + l - r[\, A, \, B \,] - r[\, C, \, D \,] + \max\left\{r(M), \ \   r(A) + r(D)  \right\}.
\label{362}
\end{align}
In particular$,$  the following formulas hold
\begin{align}
& \max_{M^{-},N^{-}}\!\!r(\, I_{m+l} + MM^{-} - NN^{-}\,) =\min \{\, m +l, \ \  m + l + r(M) - r(A) - r(D) \},
\label{363}
\\
& \min_{M^{-},N^{-}}\!\!r(\, I_{m+l} + MM^{-} - NN^{-}\,) =  m + l + r(M) - r[\, A, \, B \,] -  r[\, C, \, D \,],
\label{364}
\\
&\max_{M^{-},N^{-}}\!\!r(\, MM^{-} - NN^{-}\,) =   r[\, A, \, B \,] + r[\, C, \, D \,] + \min\{ \, 0, \ \  m + l  -  r(M) - r(A) - r(D) \},
\label{365}
\\
& \min_{M^{-},N^{-}}\!\!r(\, MM^{-} - NN^{-}\,) = r[\, A, \, B \,] + r[\, C, \, D \,] -  \min\{ \, r(M), \ \ r(A) + r(D) \}.
\label{366}
\end{align}
\end{corollary}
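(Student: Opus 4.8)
The plan is to recognize that $\lambda I_{m+l} + MM^{-} - NN^{-}$ is precisely the matrix pencil treated in Theorem \ref{TN45}, with the roles of $A$ and $B$ there played by the block matrices $M$ and $N$ and the ambient order $m$ replaced by $m+l$. Thus the whole corollary will follow by specializing the formulas of Theorem \ref{TN45}, once I have expressed the two auxiliary quantities $r(N)$ and $r[\,M,\,N\,]$ in terms of $A,B,C,D$.

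First I would record the two rank reductions. Since $N=\begin{bmatrix} A & 0 \\ 0 & D\end{bmatrix}$ is block diagonal, we immediately have $r(N)=r(A)+r(D)$. The only step that is not a pure substitution is the evaluation of $r[\,M,\,N\,]$, which I would carry out by elementary block-column operations on
\begin{align*}
[\,M,\,N\,]=\begin{bmatrix} A & B & A & 0 \\ C & D & 0 & D \end{bmatrix}.
\end{align*}
Subtracting the third block column from the first and the fourth block column from the second, and then permuting the columns, brings this to the block-diagonal form $\mathrm{diag}(\,[\,A,\,B\,],\,[\,C,\,D\,]\,)$ (up to a harmless sign on $C$), so that $r[\,M,\,N\,]=r[\,A,\,B\,]+r[\,C,\,D\,]$.

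With these two identities in hand the rest is substitution. For $\lambda\neq 1,0,-1$, Theorem \ref{TN45}(a) gives the maximum $m+l$, which is \eqref{361}, and the minimum $\max\{(m+l)+r(M)-r[\,M,\,N\,],\,(m+l)+r(N)-r[\,M,\,N\,]\}$; factoring out $-r[\,M,\,N\,]$ and inserting $r[\,M,\,N\,]=r[\,A,\,B\,]+r[\,C,\,D\,]$ together with $r(N)=r(A)+r(D)$ produces \eqref{362}. The case $\lambda=1$ follows in the same manner from Theorem \ref{TN45}(b), giving \eqref{363} and \eqref{364}. For $\lambda=0$ I would invoke Theorem \ref{TN45}(c) and rewrite its two expressions using the elementary identities $\min\{X,\,X+Y\}=X+\min\{0,\,Y\}$ and $\max\{X-a,\,X-b\}=X-\min\{a,\,b\}$ with $X=r[\,A,\,B\,]+r[\,C,\,D\,]$; this yields exactly \eqref{365} and \eqref{366}.

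The only part requiring any work is the identity $r[\,M,\,N\,]=r[\,A,\,B\,]+r[\,C,\,D\,]$, and even that is routine once the right column operations are selected; there is no genuine obstacle, as the corollary is a transparent specialization of Theorem \ref{TN45} to a block-matrix argument $M,N$.
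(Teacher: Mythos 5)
Your proposal is correct and is exactly the derivation the paper intends: Corollary \ref{T10} is stated without proof as a specialization of Theorem \ref{TN45} with $A,B$ there replaced by the block matrices $M,N$ and $m$ by $m+l$. Your two auxiliary identities $r(N)=r(A)+r(D)$ and $r[\,M,\,N\,]=r[\,A,\,B\,]+r[\,C,\,D\,]$ (via the block-column eliminations) supply the only details the paper leaves implicit, and the subsequent substitutions into parts (a), (b), (c) of Theorem \ref{TN45} reproduce \eqref{361}--\eqref{366} exactly.
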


Setting the both sides of \eqref{362}--\eqref{366} equal to  $m + l$ or zero will lead to a group of facts  on the singularity, non-singularity, and equality of the corresponding matrix expressions. In addition, it is no doubt that the maximum and minimum ranks of the following matrix pencils
$$
 \lambda_1 I_m + \lambda_2 AA^{-} \pm  \lambda_3 BB^{-}, \ \  \lambda_1 I_m + \lambda_2  AA^{-}\pm \lambda_3 C^{-}C
$$
with respect to the choices of $A^{-}$, $B^{-}$, and $C^{-}$ can be determined by a similar approach.

\section[5]{Conclusions}

We have established some general matrix rank equalities using the block matrix method, and presented many applications of these formulas in dealing with idempotent matrices and their operations. These findings seem exciting and can be selected as constructive issues in textbooks and handbooks on matrices and linear algebra. This kind of work also shows that there exist still many simple but valuable problems on fundamental objects in linear algebra for which we can make deeper exploration and find out various novel and intrinsic conclusions.

Idempotents are really simple and funny in algebras and seem to appear everywhere, so that they are prominent issues for consideration in the field of mathematics. It is expected that numerous analytical formulas can be established for calculating ranks of matrix expressions composed by idempotent matrices using various tricky constructions of block matrices, which we believe will greatly enrich the classic issue in linear algebra from the bottom level, and will also provide applicable tools to deal with various challenging problems in matrix analysis and applications. In addition to this miscellaneous work, it would be interest to consider performances of the sums and differences of several idempotent matrices,  such as, $AA^{-} \pm  BB^{-} \pm  CC^{-}, \ \ [\, A, \, B \,][\, A, \, B \,]^{-} - AA^{-} -  BB^{-},
$, etc. In these situations, analytical formulas for calculating the maximum and minimum ranks of $A - B_1X_1C_1 -B_2X_2C_2 - B_3X_3C_3$ will be used. However, it was noticed by the present author in \cite{Tian:2000} that there may exist no analytical formulas for calculating the minimum ranks of general LMVFs with three or more variable matrices. Thus the rank distributions of these matrix pencils will remain open before solving the minimum rank problems on general LMVFs.

It is well known that idempotents and generalized inverses of elements can be defined in general algebraic structures, which are also important tools in the approaches of the algebraic structures; see e.g.,
\cite{AFPR,AL,AJL,AS,And,AAK,Andr1,Andr2,ACo,ACS,AC,ACG,ACMa,AMMZ,ASS,ASh,BSa,BBo,BES1,BES2,BES3,BES4,BES5,
BSe,Berg,Bha1,Bha2,BV,Bik1,Bik2,BHKP,BGK,BSS,BS1,BS2,BS3,BH,BBa,Bro,Buc1,Buc2,BVa,Cao,CX,CP,CDu,
CGM,CMa1,CMa2,CG,CGi,CM,CMS1,CMS2,CD,Daw,Dol,DRR,Eck,Ehr,ERSS,Emm1, Emm2,FD,Fil,Fit,For,GW,HSa,HO1,HO2,HK,Hen,HT,How1,How2,HL,IMO,ITP,Lja,Kal,Kaw,KH1,KH2,Kol,KCD,
KR1,KR2,KR3,KR4,Kir,KS,KRSa,Krup,KS1,KS2,KRSi,Laf,LP,LMR,Lin,LW,MOS,MV,Mus,Neu,Oml,PRW,Park,PT,Ped,Pet,PTu,
Pru,Pta,QFG,Rab0,Rab1,Rab2,RR,RSa,RSS,Ran,Rao,RS,RSh,RSi,Rui,SZ,ST1,
ST2,Sch,Se,Sel,Shc,SG,Spi,Str,TV,Vid1,Vid2,WW,WZ,WDD,Wei,Wu,XZ1,XZ2,Yos,ZW,Zel,Zem,Zuo1,Zuo2,Zuo3}. Thus it is believed that the preceding results and facts
 can be extended with some efforts to idempotents and generalized inverses of elements in other algebraic structures that are somehow close to the matrix case.


\begin{thebibliography}{99}
 \def\itemsep{-1.3ex}
 \small

\bibitem{ASt} T.W. Anderson,  G.P.H. Styan. Cochran's theorem, rank additivity and tripotent matrices. In: Statistics and Probability, Essays in Honor of C. R. Rao (G. Kallianpur et al, eds.), North-Holland, Amsterdam-New York, 1982,  pp. 1--23.

\bibitem{AFPR} R. Ablamowicz,B. Fauser, K. Podlaski, J. Rembieli¨½ski. Idempotents of Clifford algebras. Czech. J. Phys. 53(2003), 949--954.

\bibitem{AL} R. Ablamowicz, P. Lounesto. Primitive idempotents and indecomposable left ideals in degenerate Clifford algebras.
 In: Clifford Algebras and Their Applications in Mathematical Physics. Springer Netherlands, 1986, pp. 61--65.

\bibitem{Af} S. Afriat. Orthogonal and oblique projectors and the characteristics of pairs of vector spaces.
Proc. Cambridge Phil. Soc. 53(1957), 800--816.

\bibitem{AJL} A. Alahmadi, S.K. Jain, A. Leroy. Quasi-permutation singular matrices are products of idempotents. Linear Algebra Appl. 496(2016), 487--495.

\bibitem{AHT} W.N. Anderson Jr., E. Harner, G.E. Trapp. Eigenvalues of the difference and product of projections. Linear Multilinear Algebra 17(1985), 295--299.

\bibitem{AS} W.N. Anderson Jr., M. Schreiber. The infimum of two projections. Acta Sci. Math. (Szeged) 33(1972), 165--168.

\bibitem{And} T. Ando. Unbounded or bounded idempotent operators in Hilbert space. Linear Algebra Appl. 438(2013), 3769--3775.

\bibitem{AAK} J. Andr\'e, J. Ara\'ujo, J. Konieczny. Regular centralizers of idempotent transformations.
Semigroup Forum 82(2011), 307--318.

\bibitem{Andr1} E. Andruchow. Operators which are the difference of two projections. J. Math. Anal. Appl. 420(2014), 1634--1653.

\bibitem{Andr2} E. Andruchow. Classes of idempotents in Hilbert space. Complex Anal. Oper. Theory 10(2016), 1383--1409.

\bibitem{ACo} E. Andruchow, G. Corach.  Schmidt decomposable products of projections. Integr. Equa. Oper. Theory 89(2017), 557--580.

\bibitem{ACM} E. Andruchow, G. Corach, R. Maestripieri. Geometry of unitary orbits of oblique projections.  Acta Sci. Math. (Szeged) 76(2010),659--681.

\bibitem{ACS}  E. Andruchow, G. Corach, D. Stojanoff. Geometry of oblique projections. Studia Math. 137(1999), 61--79.

\bibitem{AC} J. Antezana, G. Corach. Sampling theory, oblique projections and a question by Smale and Zhou. Appl. Comput. Harmon. Anal. 21(2006), 245--253.


\bibitem{ACG} M.L. Arias, G. Corach, M.C. Gonzalez. Products of projections and positive operators. Linear Algebra Appl. 439(2013), 1730--1741.

\bibitem{ACMa} M.L. Arias, G. Corach, A. Maestripieri.  Products of Idempotent Operators. Integr. Equ. Oper. Theory 88(2017), 269--286.


\bibitem{Ara} J. Ara\'ujo, J.D. Mitchell. An elementary proof that every singular matrix is a product of idempotent matrices.
Amer. Math. Month. 112(2005), 641--645.

\bibitem{Aup} B. Aupetit. Projections in real Banach algebras. Bull. Lond. Math. Soc. 13(1981), 412--414.

\bibitem{AMMZ} B. Aupetit, E. Makai, Jr., M. Mbekhta, J. Zem\'anek. The connected components of the idempotents in the Calkin algebra, and their liftings. Operator Theory and Banach algebras (Rabat, 1999), 23--30, Theta, Bucharest, 2003.

\bibitem{ASS}  J. Avron, R. Seiler, B. Simon. The index of a pair of projections. J. Funct. Anal. 120(1994), 220--237.

\bibitem{ASh} E. Azoff, H. Shehada. Algebras generated by mutually orthogonal idempotent operators. J. Oper. Theory 29(1993), 249--267.

\bibitem{BSa} A.A. Babadzhanyan, A.M. Safaryan. Some properties of oblique projectors in Hilbert space. (in Russian). Mat. Voprosy Kibernet. Vychisl. Tekhn. 13(1984), 114--121.

\bibitem{BB}  J.K. Baksalary, O.M. Baksalary. Nonsingularity of linear combinations of idempotent matrices.
 Linear Algebra Appl. 388(2004), 25--29.

\bibitem{BBS} J.K. Baksalary, O.M. Baksalary, G.P.H. Styan. Idempotency of linear combinations of an idempotent matrix and a tripotent matrix. Linear Algebra Appl. 354(2002), 21--34.

\bibitem{BBSz} J.K. Baksalary, O.M. Baksalary, T. Szulc. Properties of Schur complements in partitioned idempotent matrices. Linear Algebra Appl. 379(2004), 303--318.

\bibitem{Bak}  O.M. Baksalary. Idempotency of linear combinations of three idempotent matrices, two of which are disjoint.
 Linear Algebra Appl. 388(2004), 67--78.

\bibitem{BT} O.M. Baksalary, G. Trenkler. On column and null spaces of functions of a pair of oblique
projectors. Linear Multilinear Algebra 61(2013), 1116--1129.

\bibitem{Bal} C.S. Ballantine. Products of idempotent matrices. Linear Algebra Appl. 19(1978), 81--86.

\bibitem{BBo} M. Baraa, M. Boumazgour. Spectra of the difference, sum, and product of idempotents.
Studia Math. 148(2001), 1--3.

\bibitem{BES1} H. Bart, T. Ehrhardt, B. Silbermann. Zero sums of idempotents in Banach
algebras. Integr. Equ. Oper. Theory 19(1994), 125--134.

\bibitem{BES2} H. Bart, T. Ehrhardt, B. Silbermann. Logarithmic residues, generalized idempotents, and sums of idempotents in Banach algebras. Integr. Equ. Oper. Theory 29(1997), 155--186.

\bibitem{BES3}  H. Bart, T. Ehrhardt, B. Silbermann. Sums of idempotents and logarithmic residues in matrix algebras. Operator Theory Adv. Appl. 122(2001), 139--168.

\bibitem{BES4} H. Bart, T. Ehrhardt, B. Silbermann. Sums of idempotents in the Banach algebra generated by the compact operators and the identity. Operator Theory Adv. Appl. 135(2002), 39--60.

\bibitem{BES5} H. Bart, T.  Ehrhardt, B. Silbermann. Zero sums of idempotents and Banach
algebras failing to be spectrally regular. Operator Theory Adv. Appl. 237(2013), 41--78.

\bibitem{BBT}  O.M. Baksalary, D.S. Bernstein, G. Trenkler. On the equality between rank and trace of an idempotent matrix. Appl. Math. Comput. 217(2010), 4076--4080.

\bibitem{BH} A. Behn, I.R. Hentzel. Idempotents in plenary train algebras. J. Algebra 324(2010), 3241--3248.

\bibitem{BBa} E.H. Benabdi, M. Barraa. The Drazin and generalized Drazin invertibility of linear combinations of idempotents.
J. Math. Anal. Appl. 478(2019), 1163--1171.


\bibitem{BG} A. Ben-Israel,  T.N.E. Greville. Generalized Inverses:
Theory and Applications. 2nd ed., Springer-Verlag, New York, 2003.


\bibitem{BR} J. Ben\'{i}tez, V. Rakoc\v{e}vi\'{c}. Applications of CS
decomposition in linear combinations of two orthogonal projectors.
 Appl. Math. Comput. 203(2008), 761--769.

\bibitem{BTh} J. Ben\'{i}tez, N. Thome. Idempotency of linear combinations of an idempotent matrix and a t-potent matrix that commute.
Linear Multilinear Algebra 56(2008), 679--687.

\bibitem{Berg} F. Bergeron, N. Bergeron. Orthogonal idempotents in the descent algebra of $B_n$ and applications. J.
Pure Appl. Algebra 79(1992), 109--129.

\bibitem{Bha1} K.P.S. Bhaskara Rao. Products of idempotent matrices over integral domains. Linear Algebra Appl. 430(2009), 2690--2695.

\bibitem{Bha2} K.P.S. Bhaskara Rao. Products and sums of idempotent matrices over principal ideal domains. In: Bapat R., Kirkland S., Prasad K., Puntanen S. (eds) Combinatorial Matrix Theory and Generalized Inverses of Matrices. Springer, India, 2013, pp. 171--175.

\bibitem{Bik1} A. M. Bikchentaev. Differences of idempotents In $C^{*}$-algebras and the quantum Hall effect. Theor. Math. Phys. 195(2018), 557--562.

\bibitem{Bik2} A.M. Bikchentaev. Trace and differences of idempotents in $C^{*}$-Algebras. Math. Notes 105(2019), 641--648.

\bibitem{BHKP} G.F. Birkenmeier, H.E. Heatherly, J.-Y. Kim, J.-K. Park. Algebras generated by semicentral idempotents. Acta Math. Hungarica  95(2002),101--114.

\bibitem{BV} E.Y. Bobrovnikova,  S.A. Vavasis. A norm bound for projections with complex weights.
  Linear Algebra Appl. 307(2000),  69--75.



\bibitem{Bor} S. Borac. On the algebra generated by two projections. J. Math. Phys. 36(1995), 863--874.

\bibitem{BGK} A. B\"{o}ttcher, I. Gohberg, Yu. Karlovich, N. Krupnik, S. Roch, B. Silbermann, I. Spitkovsky.
Banach algebras generated by $n$ idempotents and applications. Opertor Theory Adv. Appl. 90(1996), 19--54.

\bibitem{BSS} A. B\"ottcher,  B. Simon,  I.M. Spitkovsky. Similarity between two projections.
Integr. Equ. Oper. Theory 89(2017), 507--518.

\bibitem{BS1} A. B\"{o}ttcher, I.M. Spitkovsky. On certain finite-dimensional algebras generated by two idempotents.
Linear Algebra Appl. 435(2011), 1823--1836.

\bibitem{BS2} A. B\"{o}ttcher, I.M. Spitkovsky. Group inversion in certain finite-dimensional algebras generated by two idempotents. Indaga. Math. 23(2012), 715--732.

\bibitem{BS3} A. B\"{o}ttcher, I.M. Spitkovsky. Classification of the finite-dimensional algebras generated by two tightly coupled idempotents. Linear Algebra Appl. 439(2013), 538--551.


\bibitem{BSe} M. Bre\v{s}ar, P.  \v{S}emrl.  Spectral characterization of idempotents and invertibility preserving linear maps.
Exposition. Math. 17(1999),  185--192.

\bibitem{Bro} R. Broeksteeg. The set of idempotents of a completely regular semigroup as a binary algebra.
Bull. Austral. Math. Soc. 50(1994), 91--107.

\bibitem{Buc1}  D. Buckholtz. Inverting the difference of Hilbert space projections. Amer. Math. Monthly 104(1997), 60--61.

\bibitem{Buc2}  D. Buckholtz. Hilbert space idempotents and involutions. Proc. Amer. Math. Soc. 128(2000), 1415--1418.

\bibitem{BVa} M. Burger, A. Valette. Idempotents in complex group rings: theorems of Zalesskii and Bass revisited. J. Lie Theory
8(1998), 219--228.

\bibitem{CM} S.L. Campbell, C.D. Meyer Jr.
Generalized Inverses of Linear Transformations. Corrected reprint of
the 1979 original, Dover, New York, 1991.

\bibitem{Cao}  C. Cao. Linear operators preserving idempotent matrices over division rings. Chinese Sci. Bull. 38(1993), 808--811.

\bibitem{CX} J. Cao, Y. Xue. The characterizations and representations for the generalized inverses
with prescribed idempotents in Banach algebras. Filomat 27(2013), 851--863.


\bibitem{Cer} A. \v{C}ern\'y. Characterization of the oblique projector $U(VU)^{+}V$ with application to constrained least squares. Linear Algebra Appl. 431(2009), 1564--1570.

\bibitem{CP} G.K. Chadha, I.B.S. Passi. Idempotents in matrix group rings. J. Pure Appl. Algebra 94(1994), 283--284.

\bibitem{CCLY} M.-X. Chen, B.-H. Chen,  Q.-X. Li, Z.-P. Yang.
On the open problem related to rank equalities for the sum of finitely many idempotent matrices
and its applications. Sci. World J. 2014, 2014:1--7.

\bibitem{CDu} Y.-N. Chen, H.-K. Du. Idempotency of linear combinations of two idempotent operators. Acta Math. Sinica 50(2007), 1171--1176.

\bibitem{CT-mp} S. Cheng, Y. Tian. Moore--Penrose inverses of products
  and differences of orthogonal projectors. Acta Sci. Math. (Szeged) 69(2003), 533--542.

\bibitem{CG} H. Cheraghpour, N.M. Ghosseiri. On the idempotents, nilpotents, units and zero-divisors of finite rings. Linear Multilinear Algebra 67(2019),    327--336.


\bibitem{CGi} G. Corach, J.I. Giribet. Oblique projections and sampling problems. Integr. Equ. Oper. Theory 70(2011), 307--322.

\bibitem{CGM} G. Corach, M.C. Gonzalez, A. Maestripieri. Unbounded symmetrizable idempotents.
Linear Algebra Appl. 437(2012), 659--674.

\bibitem{CMa1} G. Corach, A. Maestripieri. Weighted generalized inverses, oblique projections and least squares problems. Numer. Funct. Anal. Optim.
26(2005), 659--673.

\bibitem{CMa2} G. Corach, A. Maestripieri. Polar decomposition of oblique projections. Linear Algebra Appl. 433(2010), 511--519.


\bibitem{CMS1} G. Corach, A. Maestripieri, D. Stojanoff. Oblique projections and Schur complements.  Acta Sci. Math. (Szeged) 67(2001), 337--256.

\bibitem{CMS2}  G. Corach, A. Maestripieri, D. Stojanoff. Oblique projections and abstract splines. J. Approx. Theory 117(2002), 189--206.

\bibitem{CMS3} G. Corach, A. Maestripieri, D. Stojanoff. A classification of projectors. Banach Center Publ., Institute of Mathematics,
Polish Academy of Sciences, Warszawa 67(2004), 145--160.

\bibitem{CD} D.S. Cvetkovi\'c-Ili\'c, C.-Y. Deng. Some results on the Drazin invertibility and idempotents.
J. Math. Anal. Appl. 359(2009), 731--738.


\bibitem{CW}  D.S. Cvetkovi\'c-Ili\'c, Y. Wei. Algebraic Properties of Generalized Inverses. Springer, Singapore, 2017.


\bibitem{Cve} K. Cvetko-Vah. On the structure of semigroups of idempotent matrices. Linear Algebra Appl. 426(2007), 204--213.


\bibitem{Daw}  R.J.H. Dawlings. Products of idempotents in the semigroup of singular endomorphisms of a finite-dimensional
vector space. Proc. Roy. Soc. Edinburg. Sec. A Math. 91(1981), 123--133.


\bibitem{Den1}  C.-Y. Deng. The Drazin inverses of sum and difference of idempotents.
Linear Algebra Appl. 430(2009), 1282--1291.

\bibitem{Den2}  C.-Y. Deng. Characterizations and representations of the group inverse involving idempotents.
Linear Algebra Appl. 434(2011), 1067--1079.

\bibitem{Den3}  C.-Y. Deng.  Characterizations of the commutators and the anticommutator involving idempotents.
Appl. Math. Comput. 221(2013), 351--359.

\bibitem{DW} D.S. Djordjevi\'c,  Y. Wei. Outer generalized inverses in rings. Commun. Algebra 33(2005), 3051--3060.


\bibitem{Dok} D.\v{Z}. Dokovi\'c. Unitary similarity of projectors. Aequat. Math. 42(1991), 220--224.

\bibitem{Dol} D. Dol\v{z}zan. Multiplicative sets of idempotents in a finite ring. J. Algebra 304(2006), 271--277.

\bibitem{DRR} R. Drnovs\v{e}k, H. Radjavi, P. Rosenthal. A characterization of commutators of idempotents. Linear Algebra Appl. 347(2002), 91--99.

\bibitem{DD} H.-K. Du, C.-Y. Deng. Invertibility of linear combinations of two Idempotents. Proc. Amer. Math. Soc. 134(2006), 1451--1457.

\bibitem{Duk} J.W. Duke. A note on the similarity of a matrix and its conjugate transpose. Pacific J . Math. 31(1969), 321--323.

\bibitem{Eck} B. Eckmann. Idempotents in a complex group algebra, projective modules, and the von Neumann algebra. Arch. Math. 76(2001), 241--249.

\bibitem{Ehr} T. Ehrhardt. Finite sums of idempotents and logarithmic residues on connected
domains.  Integr. Equ. Oper. Theory 21(1995), 238--242.

\bibitem{ERSS} T. Ehrhardt, V. Rabanovich,  Yu. Samo\v\i lenko, B. Silbermann.  On the decomposition of the identity into a sum of    idempotents.  Meth. Funct. Anal. Topology 7(2001), 1--6.

\bibitem{Emm1} I. Emmanouil. Projective modules, augmentation and idempotents in group algebras.
J. Pure Appl. Algebra 158(2001), 151--160.

\bibitem{Emm2} I. Emmanouil. Idempotent Matrices over Complex Group Algebras. Springer, Berlin Heidelberg, 2006.

\bibitem{Erd}  J.A. Erdos. On products of idempotent matrices. Glasg. Math. I. 8(1967), 118--122.

\bibitem{FD} L. Fang, H. Du. Stability theorems for some combinations of two idempotents. Linear Multilinear Algebra 60(2012), 159--165.

\bibitem{Fil}  P.A. Fillmore. On sums of projections. J. Funct. Anal. 4(1969), 146--152.

\bibitem{FRS} T. Finck, S. Roch, B. Silbermann. Banach algebras generated by two idempotents and one flip.
 Math. Nachr. 216(2000), 73--94.

\bibitem{Fit} D.G. Fitz-Gerald. On inverses of products of idempotents in regular semigroups. J. Austral.  Math. Soc. 13(1972), 335--337.

\bibitem{For}  E. Formanek. Idempotents in Noetherian group rings. Can. J. Math. 25(1973), 366--369.

\bibitem{Fou}  J. Fountain. Products of idempotent integer matrices. Math. Proc. Cambridge Phil. Soc. 110(1991), 431--441.


\bibitem{Gab} S. Gabriella. Simultaneous triangularization of projector matrices. Acta Math. Hungarica 48(1986), 285--288.

\bibitem{GW} H-.L. Gau, P-.Y. Wu. Fredholmness of linear combinations of two idempotents. Integr. Equ. Oper. Theory 59(2007), 579--583.


\bibitem{GI} A. George, Kh.D. Ikramov. A note on the canonical form for a pair of orthoprojectors. J. Math. Sci. 132(2006), 153--155.

\bibitem{GG}  G.M.S. Gomes, V. Gould. Fundamental semigroups having a band of idempotents.  Semigroup Forum. Springer-Verlag, 2008.

\bibitem{Gor} W.R. Gordon. Unitary relation between a matrix and its transpose. Canad. Math. Bull. 13(1970), 279--280.

\bibitem{Gow}  R. Gow. The equivalence of an invertible matrix to its transpose. Linear Multilinear Algebra 8(1980), 371--373.

\bibitem{Gro1} J. Gro\ss. On oblique projection, rank additivity and the Moore--Penrose inverse of the sum of two matrices. Linear Multilinear Algebra 46(1999), 265--275.

\bibitem{Gro2} J. Gro\ss. Idempotency of the Hermitian part of a complex matrix. Linear Algebra Appl. 289(1999), 135--139.

\bibitem{Gro3} J. Gro\ss. On the product of orthogonal projectors.
Linear Algebra Appl. 289(1999), 141--150.

\bibitem{GT} J. Gro\ss, G. Trenkler. Nonsingularity of the difference of two oblique projectors.
SIAM J. Matrix Anal. Appl. 21(1999), 390--395.

\bibitem{Gre} T.N.E. Greville. Solutions of the matrix equations $XAX = X$ and relations
between oblique and orthogonal projectors. SIAM J. Appl. Math. 26(1974), 828--832.

\bibitem{HSa} T.E. Hall, M.V. Sapir. Idempotents, regular elements and sequences from finite semigroups. Discrete Math. 161(1996),
151--160.

\bibitem{HN}  M. Hanke, M. Neumann. The geometry of the set of scaled projections.
Linear Algebra  Appl. 190(1993), 137--148.

\bibitem{HO1} J. Hannah, K.C. O'Meara. Products of idempotents in regular rings II. J. Algebra 123(1989), 223--239.

\bibitem{HO2} J. Hannah, K.C. O'Meara. Products of simultaneously triangulable idempotent matrices. Linear Algebra Appl.
149(1991), 185--190.

\bibitem{Han} P.C. Hansen. Oblique projections and standard-form transformations for discrete inverse problems. Numer. Linear Algebra Appl. 20(2013),  250--258.

\bibitem{HP1} R.E. Hartwig and  M.S. Putcha, When is a matrix a difference of
two idempotents?  Linear Multilinear Algebra 26(1990), 267--277.

\bibitem{HP2} R.E. Hartwig, M.S. Putcha. When is a matrix a sum of idempotents? Linear Multilinear Algebra 26(1990), 279--286.

\bibitem{HS} R.E. Hartwig, G.P.H. Styan. Partially ordered idempotent matrices, in: T. Pukkila, S. Puntanen (Eds.),
Proceedings of the Second International Tampere Conference in Statistics, University of Tampere, Tampere, Finland, 1987, pp. 361--383.

\bibitem{HWW} R.E. Hartwig, G. Wang, Y. Wei. Some additive results on Drazin inverse.
Linear Algebra Appl. 322(2001), 207--217.

\bibitem{HK} J.B. Hawkins, W.J. Kammerer. A class of linear transformations which can
be written as the product of projections. Proc. Amer. Math. Soc. 19(1968), 739--745.

\bibitem{Hen} J. Henno. On idempotents and Green relations in the algebras of many-placed functions.
 Ann. Acad. Sci. Fenn. Ser. A I Math. 3(1977),  169--178.

\bibitem{HT} Y. Hirano, H. Tominaga. Rings in which every element is the sum of two idempotents.
Bull. Austral. Math. Soc. 37(1988), 161--164.

\bibitem{How1} J.M. Howie. The subsemigroup generated by the idempotents of a full transformation semigroup.
 J. Lond. Math. Soc. 141(1966), 707--716.

\bibitem{How2} J.W. Howie. Products of idempotents in finite full transformation semigroups. Proc. Roy.
Soc. Edinbur. Sec. A Math. 86(1980), 243--254.

\bibitem{Hua} Q. Huang. On perturbations for oblique projection generalized inverses of closed linear operators in Banach spaces.
Linear Algebra Appl. 434(2011), 2468--2474.

\bibitem{HL} A.C. Hunter, E.C. Lance. Idempotents in the Larson ideal of a nest algebra. Proc.  Roy. Iri. Acad.
 Sec. A Math. Phys. Sci. 90(1990), 1--4.

\bibitem{Ikr1} Kh.D. Ikramov. A canonical form for projectors under unitary similarity. Comput. Math. Math. Phys. 36(1996), 279--281.

\bibitem{Ikr2} Kh.D. Ikramov. On simultaneous reduction of a pair of oblique projectors to block triangular form. Comput. Math. Math. Phys. 38(1998), 181--182.

\bibitem{Ikr3} Kh.D. Ikramov. The canonical form as a tool for proving the properties of projectors. Comput. Math. Math. Phys. 40(2000), 1233--1238.

\bibitem{IMO} A.P. Isaev, A.I. Molev, A.F. Os'kin. On the Idempotents of Hecke Algebras. Lett. Math. Phys. 85(2008), 79--90.

\bibitem{ITP} T. Ito, P. Terwilliger, D. Passman. An algebra generated by two sets of mutually orthogonal idempotents. J. Algebra Appl. 9(2010), 839--858.

\bibitem{Kala} R. Kala. On commutativity of projectors. Discuss. Math. Probab. Stat. 28(2008), 157--165.

\bibitem{Kal} N.J. Kalton. Sums of idempotents in Banach algebras. Canad. math.  Bull. 31(1988), 448--451.

\bibitem{Kat} T. Kato. Notes on projections and perturbation theory. Technical Report No.
9, Univ. Calif., Berkley, 1955.

\bibitem{Kaw}  H. Kawai. Algebras generated by idempotents and commutative group algebras over a ring. Commun. Algebra 30(2002), 119--128.

\bibitem{KH1}  H. Kawai, N. Onoda. Commutative group algebras generated by idempotents. Toyama Math. J. 28(2005), 41--54.

\bibitem{KH2}  H. Kawai, N. Onoda. Commutative group algebras whose quotient rings by nilradicals are generated by idempotents.
Rocky Mount. J. Math. 41(2011), 229--238.

\bibitem{Kir} D. Kirby. Idempotents in a graded ring. J. London Math. Soc. s2--8(1974), 375--376.

\bibitem{KS} R. Kinser, R. Schiffler. Idempotents in representation rings of quivers. Algeb. Numb. Theory 6(2012), 967--994.

\bibitem{Kol} J.J. Koliha. Range projections of idempotents in $C^{*}$-algebras. Demonstr. Math. 34(2001), 91--104.

\bibitem{KCD} J.J. Koliha,  D.S. Cvetkovi\'c-Ili\'c,  C-.Y. Deng. Generalized Drazin invertibility of combinations of idempotents. Linear Algebra Appl. 437(2012).

\bibitem{KR1} J.J. Koliha, V. Rako\v{c}evi\'c. On the norm of idempotents in $C^{*}$-algebras.  Rocky Mount. J. Math. 34(2004), 685--698.

\bibitem{KR2} J.J. Koliha, V. Rako\v{c}evi\'c. Invertibility of the sum of idempotents. Linear Multilinear Algebra 50(2002), 285--292.

\bibitem{KR3} J.J. Koliha, V. Rako\v{c}evi\'c. The nullity and rank of linear combinations of idempotent matrices. Linear Algebra Appl. 418(2006), 11--14.

\bibitem{KR4} J.J. Koliha, V. Rako\v{c}evi\'c.  Stability theorems for linear combinations of idempotents. Integr. Equ. Oper. Theory 58(2007), 597--601.

\bibitem{KRS} J.J. Koliha, V. Rako\v{c}evi\'c, I. Stra\v{s}kraba. The difference and sum of projectors. Linear Algebra Appl. 388(2004), 279--288.

\bibitem{KRa} S. Krishnamoorthy, T. Rajagopalan. k-Idempotency of linear combinations of an idempotent matrix and a tripotent matrix that commute. Indian J. Pure Appl. Math. 42(2011), 99--108.

\bibitem{KRSa}  S.A. Kruglyak,  V.I. Rabanovich, Yu.S. Samo\u{\i}lenko. Sums of projectors. Funkts. Anal. Prilozhen. 36(2002), 20--35.

\bibitem{KS1}  S.A.  Kruglyak, Yu.S. Samo\u{\i}lenko. On the complexity of description of representations of ${\ast}$-algebras generated by idempotents. Proc. Amer. Math. Soc. 128(2000), 1655--1664.

\bibitem{KS2}  S.A.  Kruglyak, Yu. S. Samo\u{\i}lenko. Structure theorems for families of idempotents.
Ukrain. Math. J. 50(1998), 593--604.

\bibitem{Krup} N. Krupnik. Minimal number of idempotent generators of matrix algebras over arbitrary
field. Comm. Algebra 20(1992), 3251--3257.

\bibitem{KRSi}  N. Krupnik, S. Roch, B. Silbermann. On $C^{\ast}$-algebras generated by idempotents. J. Funct. Anal. 137(1996), 303--319.

\bibitem{KSp} N. Krupnik, E. Spigel. Invertibility symbols of a banach algebra generated by two idempotents and a shift. Integr. Equ. Oper. Theory 17(1993), 567--578.

\bibitem{Laf} T.J. Laffey. Algebras generated by two idempotents. Linear Algebra Appl. 37(1981), 45--53.


\bibitem{LP} D.R. Larson, D.R. Pitts. Idempotents in nest algebras. J. Funct. Anal. 97(1991), 162--193.

\bibitem{LMR} C. Laurier, B. Mathes, H. Radjavi. Sums of idempotents. Linear Algebra Appl. 208/209(1994), 175--197.

\bibitem{Lin} P.A. Linnell. Zero divisors and idempotents in group rings. Math. Proc. Cambridge Phil. Soc. 81(1977), 365--368.

\bibitem{LWY} X. Liu, L. Wu, Y. Yu. The group inverse of combinations of two idempotent matrices.
Linear Multilinear Algebra 59(2011), 101--115.

\bibitem{Lja} V.E. Ljance. Certain properties of idempotent operators (in Russian). Teoret. Prik. Mat. 1(1958), 16--22.

\bibitem{LW} P. Lounesto, G.P. Wene. Idempotent structure of Clifford algebras. Acta Appl. Math. 9(1987), 165--173.

\bibitem{MOS} C. Mallol, J.P. Olivier, D. Serrato. Groupoids, idempotents and pointwise inverses in relational categories.
J. Pur. Appl. Algebra 36(1985), 23--51.

\bibitem{MS:1974}  G. Marsaglia, G.P.H. Styan.
Equalities and inequalities for ranks of matrices. Linear Multilinear Algebra 2(1974), 269--292.


\bibitem{MV} F.E.B. Mart\'{\i}nez, C.R.G. Vergara. Explicit idempotents of finite group algebras. Finite Fields Appl. 28(2014), 123--131.

\bibitem{May}  J.P. May. Idempotents and Landweber exactness in brave new algebra. Homol. Homot. Appl. 3(2001), 355--359.

\bibitem{Mey} C.D. Meyer, Jr. On the construction of solutions to the matrix equations $AX = A$ and $YA = A$. SIAM Rev. 11(1969), 612--615.

\bibitem{Mus} D.H. Mushtari. Quantum logics of idempotents or projections in $C^{*}$-algebras. Lobachevskii J. Math. 33(2012), 1--4.

\bibitem{ND} B. Na\v{c}evska,  D.S. Djordjevi\'c. Inner generalized inverses with prescribed idempotents.
Commun. Algebra 39(2011), 634--646.

\bibitem{Neu} M. Neunh\"{o}ffer. Arithmetical properties of idempotents in group algebras. Compt. Rend. Acad. Sci. Ser. I. Math. 330(2000), 535--539.

\bibitem{Nis}  K. Nishio. The structure of a real linear combination of two projections. Linear Algebra Appl. 66(1985),
169--176.

\bibitem{Oml} M. Omladic. Spectra of the difference and product of projections. Proc. Am. Math. Soc.
99(1987), 317--318.

\bibitem{Ota} S. Ota. Unbounded nilpotent and idempotents, J. Math. Anal. Appl. 132(1988), 300--308.

\bibitem{OO} H. \"Ozdem{\i}r,  A.Y. \"Ozban. On idempotency of linear combinations of idempotent matrices. Appl. Math. Comput. 159(2004), 439--448.


\bibitem{PRW}¡¡R. Par\'e,  R. Rosebrugh, R.J. Wood. Idempotents in bicategories. Bull. Austral. Math. Soc. 1989, 39(1989), 421--434.

\bibitem{Park} W.-S. Park.  The units and idempotents in the group ring of a finite cyclic group. Commun. Korean Math. Soc. 12(1997), 855--864.


\bibitem{Pas} A. Paszkiewicz. Sums of small numbers of idempotents. Michigan Math. J. 14(1968), 453--465.

\bibitem{Paz1} C.S. Pazzis. On linear combinations of two idempotent matrices over an arbitrary field.
Linear Algebra Appl. 433(2010), 625--636.

\bibitem{Paz2} C.S. Pazzis. On decomposing any matrix as a linear combination of three idempotents. Linear Algebra Appl. 433(2010), 843--855.

\bibitem{Paz3} C.S. Pazzis. On sums of idempotent matrices over a field of positive characteristic. Linear Algebra Appl. 433(2010),
856--866.

\bibitem{PT} C. Pearcy, D.M. Topping. Sums of small number of idempotents.  Mich. Math. J. 14(1967), 453--465.

\bibitem{Ped} T.V. Pedersen.  Idempotents in quotients and restrictions of Banach algebras of functions. Ann. Inst. Fourier (Grenoble) 46(1996), 1095--1124.

\bibitem{Pet} M. Petrich. Regular semigroups satisfying certain conditions on idempotents and ideals. Trans. Amer. Math. Soc. 170(1972), 245--267.

\bibitem{PTu} S. Popovych, T. Turowska. Unitarizable and non-unitarizable representations of algebras generated by idempotents.
Algebra Discr. Math.  3(2004), 111--125.

\bibitem{Pru} M. Pruthi. Idempotents in the group ring of quaternions. J. Inform. Optim. Sci. 37(2016), 261--269.

\bibitem{Pta} V. Ptak. Extremal operators and oblique projections, Casopis Pest. Mat. 110(1985), 343--350.

\bibitem{PSI} S. Puntanen, G.P.H. Styan, J. Isotalo.
Matrix Tricks for Linear Statistical Models: Our Personal Top Twenty. Springer, Berlin, 2011.


\bibitem{QFG} A.E. Qallali, J. Fountain, V. Gould. Fundamental representations for classes of semigroups containing a band of Idempotents. Commun. Algebra 36(2008), 2998--3031.

\bibitem{Rab0} V.I. Rabanovych. Banach algebras generated by three idempotents. Methods Funct. Anal. Topology  4(1998), 65--67.

\bibitem{Rab1} V.I. Rabanovych. On the decomposition of an operator into a sum of four idempotents.
Ukrain. Math. J. 56(2004), 512--519.

\bibitem{Rab2} V.I. Rabanovych. On the decomposition of a diagonal operator into a linear combination of idempotents
or projectors. Ukrainian Math. J. 57(2005), 466--473.

\bibitem{RSa} V.I. Rabanovych,  Yu.S. Samoilenko. When a sum of idempotents or projections is a multiple of the identity.
Funct. Anal. Appl. 34(2000), 311--313.

\bibitem{RSS} V.I. Rabanovych, Yu.S. Samoilenko,  A.V. Strelets. On the identities in algebras generated by linearly connected idempotents.
Ukrain. Math. J. 56(2004), 929--946.

\bibitem{RS} V.I. Rabanovych,  A.V. Strelets.  On polynomial identities in algebras generated by idempotents and their $*$-representations.
Symm. nonlin. Math. phys. Part 1--3(2004), 1179--1183.

\bibitem{Rad} H. Radjavi. Structure of $A^{*}A - AA^{*}$. J. Math. Mech. 16(1966), 19--26.

\bibitem{RR} H. Radjavi, P. Rosenthal. On commutators of idempotents. Linear Multilinear Algebra 50(2002), 121--124.


\bibitem{Rak} V. Rakocevic. On the norm of idempotent in a Hilbert space. Amer. Math. Month. 107(2000), 748--750.

\bibitem{RM} C.R. Rao, S.K. Mitra. Generalized Inverse of Matrices and Its Applications. Wiley, New York, 1971.

\bibitem{Ran} B. Randrianantoanina. Norm-one projections in Banach spaces. International Conference on Mathematical Analysis and its Applications (Kaohsiung, 2000). Taiwanese J. Math. 5(2001), 35--95.

\bibitem{Rao} K.P.S.B. Rao. Products and sums of idempotent matrices over principal ideal domains.  Combinatorial Matrix Theory and Generalized Inverses of Matrices. Springer, India, 2013.

\bibitem{RSh} X.M. Ren, K.P. Shum. On superabundant semigroups whose set of idempotents forms a subsemigroup. Algebra Colloq. 14(2007), 215--228.

\bibitem{RSi} S. Roch, B. Silbermann. Algebras generated by idempotents and the symbol calculus
for singular integral operators. Integr. Equ. Oper. Theory 11(1988), 385--419.



\bibitem{Rui} W. Ruitenburg. Products of idempotent matrices over Hermite domains. Semigroup Forum 46(1993), 371--378.

\bibitem{SZ} L. Salce, P. Zanardo. Products of elementary and idempotent matrices over integral domains. Linear Algebra Appl.
452(2014), 130--152.

\bibitem{ST1} Y. Samo\u{\i}lenko, L. Turowska. Representations of a class of algebras generated by idempotents. Ukrainian Mathematics Congress--2001 (Ukrainian), pp. 270--288.

\bibitem{ST2} Y. Samo\u{\i}lenko, L. Turowska. On bounded and unbounded idempotents whose sum is a multiple of the identity.
 Methods Funct. Anal. Topology 8(2002), 79--100.

\bibitem{SO} M. Sarduvan, H. \"Ozdem{\i}r. On linear combinations of two tripotent, idempotent, and involutive matrices.
Appl. Math. Comput. 200(2008), 401--406.

\bibitem{Sch}¡¡B. Schmeikal. The idempotent manifold in a Clifford algebra. Adv. Appl. Clifford Algebras 16(2006), 159--165.

\bibitem{Se}  G.B. Seligman. On idempotents in reduced enveloping algebras. Trans. Am. Math. Soc. 355(2003), 3291--3300.

\bibitem{Sel} P. Selinger. Idempotents in dagger categories.  Electron. Notes Theoret. Comput. Sci. 210(2008), 107--122.


\bibitem{Shc} M.V. Shchukin. N-homogeneous $C^{\ast}$-algebras generated by idempotents. Russian Math. 55(2011), 81--88.

\bibitem{Sim} B. Simon. Unitaries permuting two orthogonal projections. Linear Algebra Appl.  528(2017), 436--441.

\bibitem{SG} G. Song, X. Guo. Diagonability of idempotent matrices over noncommutative rings. Linear Algebra Appl. 297(1999), 1--7.

\bibitem{Spie} E. Spiegel. Sums of projections. Linear Algebra Appl. 187(1993), 239--249.

\bibitem{Spi} I. Spitkovsky. Once more on algebras generated by two projections. Linear Algebra Appl. 208/209(1994), 377--395.

\bibitem{Steg} A. Steger. Diagonability of idempotent matrices. Pacific J. Math. 19(1966), 535--542.

\bibitem{Ste} G.W. Stewart. On the numerical analysis of oblique projectors. SIAM J. Matrix Anal. Appl. 32(2011), 309--348.

\bibitem{Str} A. Strojnowski. Idempotents and zero divisors in group rings. Comm. Algebra 14(1986), 1171--1185.

\bibitem{Sty}  G.P.H. Styan. Schur complements and linear models. In:
Proc. First International Tampere Seminar on Linear Statistical  Models
and Their Applications, Dept. of Mathematical Science, University of
Tampere, Finland, 1985,  pp. 37--75.

\bibitem{TY}  Y. Takane, H. Yanai. On oblique projectors. Linear Algebra Appl. 289(1999), 297--310.

\bibitem{TZa} O. Taussky, H. Zassenhaus. On the similarity transformation between a
matrix and its transpose, Pacific. J. Math. 9 (1959), 893--896.


\bibitem{Tian:2000} Y. Tian. Solvability of two linear matrix equations.
Linear Multilinear Algebra 48(2000), 123--147.


\bibitem{T0} Y. Tian. The maximal and minimal ranks of some expressions
of generalized inverses of matrices. Southeast Asian Bull.
Math. 25(2002), 745--755.

\bibitem{T1} Y. Tian.  Upper and lower bounds for ranks of matrix
expressions using generalized inverses. Linear Algebra Appl. 355(2002), 187--214.


\bibitem{T2} Y. Tian.  Rank equalities related to outer inverses of
matrices and applications. Linear Multilinear Algebra 49(2002),
269--288.

\bibitem{T3} Y. Tian. More on rank equalities for outer inverses of
matrices with applications, Int. J. Math. Game Theory Algebra 12(2002),
 137--151.

\bibitem{Ti-pmd} Y. Tian. Reverse order laws for the Drazin inverse of
  a triple matrix product. Pub. Math. Debrecen 63(2003), 261--277.



\bibitem{Tian:2011} Y. Tian. A disjoint idempotent decomposition for linear combinations produced
from two commutative tripotent matrices and its applications. Linear Multilinear Algebra 59(2011), 1237--1246.

\bibitem{Tian:2019} Y. Tian. Formulas for calculating the dimensions of the sums and
the intersections of a family of linear subspaces with applications. Beitr. Algebra Geom. 60(2019), 471--485.

\bibitem{Tian:2019b} Y. Tian. On relationships between two linear subspaces and two orthogonal projectors.
Spec. Matrices 7(2019), 142--212.

\bibitem{TS1} Y. Tian,  G.P.H. Styan. Rank equalities for idempotent and involutory matrices.
 Linear Algebra Appl. 335(2001), 101--117.

\bibitem{TS2} Y. Tian,  G.P.H. Styan. A new rank formula for idempotent matrices with applications.
Comment. Math. Univ. Carolin. 43(2002), 379--384.

\bibitem{TS3} Y. Tian, G.P.H. Styan. Rank equalities for idempotent matrices
with applications. J. Comput. Appl. Math. 191(2006), 77--97.

\bibitem{TS4} Y. Tian, G.P.H. Styan. Cochran's statistical theorem revisited. J. Statist. Plann. Inference 136(2006),  2659--2667.

\bibitem{TT1} Y. Tian, Y. Takane. Some properties of projectors associated with the
WLSE under a general linear model. J. Multivariate Anal. 99(2008), 1070--1082.

\bibitem{TT2} Y. Tian, Y. Takane. On V-orthogonal projectors associated with a semi-norm.
Ann. Inst. Stat. Math. 61(2009),  517--530.

\bibitem{Tos} M. To\v{s}i\'c. On some linear combinations of commuting involutive and idempotent matrices.
Appl. Math. Comput.  233(2014), 103--108.

\bibitem{TV} I. Trendafilov, D. Vladeva. Idempotent elements of the endomorphism semiring of a finite chain.
C.R. Acad. Bulgare Sci. 66(2013), 621--628.

\bibitem{Tre} G. Trenkler. Characterizations of oblique and orthogonal projectors. In: Proceedings of the International Conference on Linear Statistical Inference LINSTAT '93, T. Cali/'nski, R. Kala (eds.), Mathematics and Its Applications, vol 306. Springer, Dordrecht 1994,  pp. 255--270.

\bibitem{Ver} J. Vermeer. Orthogonal similarity of a real matrix and its transpose.  Linear Algebra Appl. 428(2008),  382--392.


\bibitem{Vet} A.M. Vetoshkin. Jordan form of the difference of projectors. Comput. Math. Math. Phys. 54(2014), 382--396.

\bibitem{Vid1}  I. Vidav. On idempotent operators in a Hilbert space. Publ. Inst. Math. (Beograd) (N.S.) 4(1964) 157--163.

\bibitem{Vid2} I. Vidav. The norm of the sum of two projections. Proc. Amer. Math. Soc. 65(1977), 297--298.



\bibitem{Wan} J.-H. Wang. The length problem for a sum of idempotents. Linear Algebra Appl. 215(1995), 135--159.

\bibitem{WW} J.-H. Wang, P.-Y. Wu. Difference and similarity models of two idempotent operators. Linear Algebra Appl. 208/209(1994), 257--282.

\bibitem{WZ}  Y. Wang, H. Zhang. Perturbation analysis for oblique projection generalized inverses of closed linear operators in Banach spaces.
Linear Algebra Appl. 426(2007), 1--11.

\bibitem{WDD} Y.-Q. Wang, H.-K. Du, Y.-N. Dou. On the index of Fredholm pairs of idempotents.
Acta Math. Sin. (Engl. Ser.) 25(2009), 679--686.


\bibitem{Wei}  A. Weiss. Idempotents in group rings. J. Pure Appl. Algebra 16(1980), 207--213.

\bibitem{Wen} G.P. Wene. The idempotent structure of an infinite dimensional Clifford algebra. In: Micali A., Boudet R., Helmstetter J. (eds) Clifford Algebras and their Applications in Mathematical Physics. Fundamental Theories of Physics, vol 47.
    Springer, Dordrecht, 1992, pp.161--164.


\bibitem{Wu} P.Y. Wu. Sums of idempotent matrices. Linear Algebra Appl. 142(1990), 43--54.


\bibitem{XZZ} T. Xie, X. Zhu, K. Zuo.  The null and column spaces of combinations of two projectors. J. Math. Resear. Appl. 36(2016), 407--422.

\bibitem{XZ1} T. Xie, K. Zuo. Fredholmness of combinations of two idempotents. Euro. Pure Appl. Math. 3(2010), 678--685.

\bibitem{XZ2} T. Xie, K. Zuo. The Drazin inverses of Combinations of two idempotents. Math. Appl. 26(2013), 480--491.


\bibitem{YTY} H. Yanai, K. Takeuchi, Y. Takane. Projection Matrices, Generalized Inverse Matrices, and Singular Value Decomposition. Springer, New York, 2011.


\bibitem{YL}  Z.-P. Yang, H.-S. Lian. The researches on the zero matrices and identity matrices for linear expression of idempotent matrices. J. Xiamen Univer. 48(2009), 310--316.



\bibitem{YG} X. Yao, G. Gao. On invertibility of multiplicative combination of two idempotents. J. Math. Sci. Adv. Appl. 4(2010), 329--335.

\bibitem{Yos} T. Yoshida. Idempotents of Burnside rings and dress induction theorem. J. Algebra 80(1993), 90--105.


\bibitem{Zel} E. Zelmanov. Idempotents in Conformal Algebras. In: Proceedings of the Third International Algebra Conference. Springer, Dordrecht, pp. 257--266,  2003.

\bibitem{Zem} J. Zemanek. Idempotents in Banach Algebras. Bull. Lond. Math. Soc. 11(1979), 177--183.

\bibitem{ZW} S. Zhang, J. Wu. The Drazin inverse of the linear combinations of two idempotents
in the Banach algebra. Linear Algebra Appl. 436(2012), 3132--3138.

\bibitem{ZLLY} S. Zhao, H. L\"{u}, Z. Lin, Z. Yang.  Rank equalities for linear combinations of the power of product of
 idempotent matrices.  J. Northeast Norm. Univ. Nat. Sci. Edit. 44(4)(2012), 21--25.

\bibitem{Zuo1} K. Zuo.¡¡The nullity and rank of combinations of idempotent matrices. J. Math. (Wuhan) 28(2008), 619--622.

\bibitem{Zuo2}  K. Zuo. Nosingularity of the difference and the sum of two idempotent matrices. Linear
Algebra Appl. 433(2010), 476--482.

\bibitem{Zuo3} K. Zuo. Some rank equalities about combinations of two idempotent matrices.  Wuhan Univ. J. Nat. Sci. 15(2010), 380--384.


\bibitem{ZX} K. Zuo, T. Xie. Group inverse of combinations of two idempotent matrices (In Chinese). J. Math. 34(2014), 497--501.


\bibitem{ZZY} K. Zuo, X. Zhou, S. Yu.  On some rank equalities of combinations of idempotent matrices with applications.
 J. Huazhong Norm. Univ. Nat. Sci. 45(2011), 366--370.



\end{thebibliography}
\end{document}